\theoremstyle{plain}
\newtheorem{prop}{Proposition}[section]
\let\a\alpha
\let\b\beta
\let\d\delta
\let\e\varepsilon
\let\f\varphi
\let\Om\Omega
\def\lra{\longrightarrow}
\def\egal{\ar@{=}}
\def\A{\mathcal A}
\def\C{\mathbb C}
\def\CC{\mathcal C}
\def\E{\mathcal E}
\def\F{\mathcal F}
\def\G{\mathcal G}
\def\I{\mathcal I}
\def\J{\mathcal J}
\def\K{\mathcal K}
\def\P{\mathbb P}
\def\O{\mathcal O}
\def\TT{\mathcal T}
\def\W{\mathbb W}
\def\Ker{{\mathcal Ker}}
\def\Coker{{\mathcal Coker}}
\def\Im{{\mathcal Im}}
\def\D{{\scriptscriptstyle \operatorname{D}}}
\def\T{{\scriptscriptstyle \operatorname{T}}}
\def\EE{\operatorname{E}}
\def\H{\operatorname{H}}
\def\h{\operatorname{h}}
\def\M{\operatorname{M}_{{\mathbb P}^2}}
\def\N{\operatorname{N}}
\def\pp{\operatorname{p}}
\def\PP{\operatorname{P}}
\def\SS{\operatorname{S}}
\def\Hom{\operatorname{Hom}}
\def\Aut{\operatorname{Aut}}
\def\Ext{\operatorname{Ext}}
\def\rank{\operatorname{rank}}
\def\tensor{\otimes}
\def\isom{\simeq}
\newcommand{\noi}{\noindent}
\def\ds{\displaystyle}
\def\ba{\begin{array}}
\def\ea{\end{array}}
\begin{document}

\subjclass{Primary 14D20, 14D22}

\title[moduli of plane semi-stable sheaves with Hilbert polynomial $\operatorname{P}(m)=6m+2$]
{on the moduli space of semi-stable plane sheaves
with Hilbert polynomial $\mathbf{\operatorname{\mathbf{P}}(m)=6m+2}$}

\author{mario maican}

\address{Mario Maican \\
Institute of Mathematics of the Romanian Academy \\
Calea Grivi\c tei 21 \\
 010702 Bucharest \\
Romania}

\email{mario.maican@imar.ro}

\begin{abstract}
We study the Simpson moduli space of semi-stable sheaves on the complex
projective plane that have dimension $1$, multiplicity $6$ and Euler characteristic $2$.
We describe concretely these sheaves as cokernels of morphisms of locally free sheaves
and we stratify the moduli space according to the types of sheaves that occur.
\end{abstract}

\maketitle

\tableofcontents

\noi
{\sc Acknowledgements.} The author was supported by the
Consiliul Na\c tional al Cercet\u arii \c Stiin\c tifice,
an agency of the Romanian Government,
grant PN II--RU 169/2010 PD--219.


\section{Introduction}

\noi
Let $\M(r,\chi)$ denote the moduli space of Gieseker semi-stable sheaves on $\P^2(\C)$
with Hilbert polynomial $\PP(m)=rm+\chi$, $r$ and $\chi$ being fixed integers, $r \ge 1$.
Le Potier \cite{lepotier} found that $\M(r,\chi)$ is an irreducible projective variety of dimension $r^2+1$,
smooth at points given by stable sheaves and rational if $\chi \equiv 1$ or $2 \mod r$.
In \cite{drezet-maican}, \cite{mult_five} and \cite{mult_six_one} a complete description of semi-stable
sheaves giving points in $\M(4,\chi)$, $\M(5,\chi)$ and $\M(6,1)$ was found.
These moduli spaces were shown to have natural stratifications given by cohomological conditions
on the sheaves involved. Here we are concerned with $\M(6,2)$. We describe all semi-stable
sheaves giving points in $\M(6,2)$ and we decompose this moduli space into five strata:
an open stratum $X_0$; a locally closed stratum that is the disjoint union of two irreducible
locally closed subsets $X_1$ and $X_2$, each of codimension $3$; a locally closed stratum that is
the disjoint union of two irreducible locally closed subsets $X_3$ and $X_4$, each of codimension $5$;
an irreducible locally closed stratum $X_5$ of codimension $7$ and a closed irreducible stratum
$X_6$ of codimension $9$.
For some of these sets we have concrete geometric descriptions:
$X_1$ is a certain open subset inside a fibre bundle with fibre $\P^{20}$ and base $\N(3,4,3) \times \P^2$,
where $\N(3,4,3)$ is the moduli space of semi-stable Kronecker modules $f \colon 4\O(-2) \to 3\O(-1)$;
$X_3$ is an open subset of a fibre bundle with fibre $\P^{22}$ and base $Y \times \N(3,2,3)$,
where $Y$ is the Hilbert scheme of zero-dimensional subschemes of $\P^2$ of length $2$
and $\N(3,2,3)$ is the moduli space of semi-stable Kronecker modules $f \colon 2\O(-1) \to 3\O$;
$X_5$ is an open subset of a fibre bundle with fibre $\P^{24}$ and base $\P^2 \times Y$;
the closed stratum $X_6$
is isomorphic to the universal sextic in $\P^2 \times \P(\SS^6 V^*)$.
The following table contains a description of each $X_i$ by cohomological conditions.
The third column of the table lists all sheaves giving points in $X_i$.
The sets $W$ of morphisms $\f$ are acted upon by the algebraic groups of automorphisms
of sheaves and in each case, apart from $X_0$, the geometric quotient is $X_i$.
The points given by properly semi-stable sheaves are all in $X_0$, which is why
this stratum cannot be a geometric quotient of the set of morphisms.
The table below is organised as the table in the introduction to \cite{mult_six_one},
to which we generally refer for notations and conventions.

\begin{table}[!hpt]{}
\begin{center}
{\small
\begin{tabular}{|c|c|c|}
\hline \hline
{}
&
\begin{tabular}{c}
{\tiny cohomological} \\
{\tiny conditions}
\end{tabular}
&
$W$
\\
\hline
$X_0$
&
\begin{tabular}{r}
{} \\
$\h^0(\F(-1))=0$ \\
$\h^1(\F)=0$\\
$\h^0(\F \tensor \Om^1(1))=0$ \\
{}
\end{tabular}
&
\begin{tabular}{c}
{} \\
$0 \lra 4\O(-2) \stackrel{\f}{\lra} 2\O(-1) \oplus 2\O \lra \F \lra 0$ \\
$\f$ is not equivalent to a morphism of any of the forms \\
${\ds
\left[
\ba{cccc}
\star & 0 & 0 & 0 \\
\star & \star & \star & \star \\
\star & \star & \star & \star \\
\star & \star & \star & \star
\ea
\right], \quad \left[
\ba{cccc}
\star & \star & 0 & 0 \\
\star & \star & 0 & 0 \\
\star & \star & \star & \star \\
\star & \star & \star & \star
\ea
\right], \quad \left[
\ba{cccc}
\star & \star & \star & 0 \\
\star & \star & \star & 0 \\
\star & \star & \star & 0 \\
\star & \star & \star & \star
\ea
\right]
}$ \\
{}
\end{tabular} \\
\hline
$X_1$
&
\begin{tabular}{r}
{} \\
$\h^0(\F(-1))=0$ \\
$\h^1(\F)=0$\\
$\h^0(\F \tensor \Om^1(1))=1$ \\
{}
\end{tabular}
&
\begin{tabular}{c}
{} \\
$0 \lra 4\O(-2) \oplus \O(-1) \stackrel{\f}{\lra} 3\O(-1) \oplus 2\O \lra \F \lra 0$ \\
$\f_{12}=0$, $\f_{11}$ and $\f_{22}$ are semi-stable as Kronecker modules \\
{}
\end{tabular} \\
\hline
$X_2$
&
\begin{tabular}{r}
{} \\
$\h^0(\F(-1))=0$ \\
$\h^1(\F)=1$\\
$\h^0(\F \tensor \Om^1(1))=1$ \\
{}
\end{tabular}
&
\begin{tabular}{c}
{} \\
$0 \lra \O(-3) \oplus \O(-2) \oplus \O(-1) \stackrel{\f}{\lra} 3\O \lra \F \lra 0$ \\
$\f$ is not equivalent to a morphism of any of the forms \\
${\ds
\left[
\ba{ccc}
\star & \star & \star \\
\star & \star & 0 \\
\star & \star & 0
\ea
\right], \quad \left[
\ba{ccc}
\star & \star & \star \\
\star & 0 & \star \\
\star & 0 & \star
\ea
\right], \quad \left[
\ba{ccc}
\star & \star & \star \\
\star & \star & \star \\
\star & 0 & 0
\ea
\right]
}$ \\
{}
\end{tabular} \\
\hline
$X_{3}$
&
\begin{tabular}{r}
{} \\
$\h^0(\F(-1))=0$ \\
$\h^1(\F)=1$\\
$\h^0(\F \tensor \Om^1(1))=2$ \\
{}
\end{tabular}
&
\begin{tabular}{c}
{} \\
$0 \lra \O(-3) \oplus \O(-2) \oplus 2\O(-1) \stackrel{\f}{\lra} \O (-1) \oplus 3\O \lra \F \lra 0$ \\
$\f_{13}=0$, $\f_{12} \neq 0$ and does not divide $\f_{11}$ \\
$\f_{23}$ has linearly independent maximal minors \\
{}
\end{tabular} \\
\hline
$X_{4}$
&
\begin{tabular}{r}
{} \\
$\h^0(\F(-1))=1$ \\
$\h^1(\F)=1$\\
$\h^0(\F \tensor \Om^1(1))=3$ \\
{}
\end{tabular}
&
\begin{tabular}{c}
{} \\
$0 \lra \O(-3) \oplus 2\O(-2) \stackrel{\f}{\lra} 2\O(-1) \oplus \O(1) \lra \F \to 0$ \\
$\f$ is not equivalent to a morphism of any of the forms \\
${\ds
\left[
\ba{ccc}
\star & 0 & 0 \\
\star & \star & \star \\
\star & \star & \star
\ea
\right], \quad \left[
\ba{ccc}
\star & \star & 0 \\
\star & \star & 0 \\
\star & \star & \star
\ea
\right], \quad \left[
\ba{ccc}
0 & 0 & \star \\
\star & \star & \star \\
\star & \star & \star
\ea
\right], \quad \left[
\ba{ccc}
0 & \star & \star \\
0 & \star & \star \\
\star & \star & \star
\ea
\right]
}$ \\
{}
\end{tabular} \\
\hline
$X_{5}$
&
\begin{tabular}{r}
{} \\
$\h^0(\F(-1))=1$ \\
$\h^1(\F)=2$\\
$\h^0(\F \tensor \Om^1(1))=4$ \\
{}
\end{tabular}
&
\begin{tabular}{c}
{} \\
$0 \lra 2\O(-3) \oplus \O(-1) \stackrel{\f}{\lra} \O(-2) \oplus \O \oplus \O(1) \lra \F \lra 0$ \\
$\f_{11}$ has linearly independent entries \\
$\f_{22} \neq 0$ and does not divide $\f_{32}$ \\
{}
\end{tabular} \\
\hline
$X_6$
&
\begin{tabular}{r}
{} \\
$\h^0(\F(-1))=2$ \\
$\h^1(\F)=3$ \\
$\h^0(\F \tensor \Om^1(1))=6$ \\
{}
\end{tabular}
&
\begin{tabular}{c}
{} \\
$0 \lra \O(-4) \oplus \O \lra 2\O(1) \lra \F \lra 0$ \\
$\f_{12}$ has linearly independent entries \\
{}
\end{tabular} \\
\hline \hline
\end{tabular}
}
\end{center}
\end{table}


Let $C \subset \P^2$ denote an arbitrary smooth sextic curve and let $P_i$ denote distinct points on $C$.
The generic sheaves in $X_1$ are of the form $\O_C(1)(P_1+ \cdots +P_6-P_7)$,
where $P_1, \ldots, P_6$ are not contained in a conic curve.
The generic sheaves in $X_3$ have the form $\O_C(2)(-P_1-P_2-P_3+P_4+P_5)$,
where $P_1, P_2, P_3$ are non-colinear.
The generic sheaves in $X_4$ are of the form $\O_C(1)(P_1 + \cdots + P_5)$,
where $P_1, \ldots, P_5$ are in general linear position.
The generic sheaves in $X_5$ are of the form $\O_C(2)(P_1-P_2-P_3)$.
The sheaves giving points in $X_6$ are of the form $\O_C(2)(-P)$,
(in this case $C$ need not be smooth).


\section{The open stratum}

\begin{prop}
\label{2.1}
Every sheaf $\F$ giving a point in $\M(6,2)$ and satisfying the condition $\h^1(\F)=0$
also satisfies the condition $\h^0(\F(-1))=0$.
For these sheaves $\h^0(\F \tensor \Om^1(1))= 0$ or $1$.
The sheaves from the first case are given by resolutions of the form
\[
\tag{i}
0 \lra 4\O(-2) \stackrel{\f}{\lra} 2\O(-1) \oplus 2\O \lra \F \lra 0,
\]
where $\f$ is not equivalent, modulo the action of the natural group of automorphisms,
to a morphism represented by a matrix of the form
\[
\left[
\ba{cccc}
\star & 0 & 0 & 0 \\
\star & \star & \star & \star \\
\star & \star & \star & \star \\
\star & \star & \star & \star
\ea
\right] \qquad \text{or} \qquad \left[
\ba{cccc}
\star & \star & 0 & 0 \\
\star & \star & 0 & 0 \\
\star & \star & \star & \star \\
\star & \star & \star & \star
\ea
\right] \qquad \text{or} \qquad \left[
\ba{cccc}
\star & \star & \star & 0 \\
\star & \star & \star & 0 \\
\star & \star & \star & 0 \\
\star & \star & \star & \star
\ea
\right].
\]
The sheaves in the second case are precisely the sheaves with resolution of the form
\[
\tag{ii}
0 \lra 4\O(-2) \oplus \O(-1) \stackrel{\f}{\lra} 3\O(-1) \oplus 2\O \lra \F \lra 0,
\]
where $\f_{12}=0$, $\f_{11}$ is semi-stable as a Kronecker $V$-module and
$\f_{22}$ has linearly independent entries.
\end{prop}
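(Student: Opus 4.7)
\noindent The proof proceeds in three stages via the Beilinson spectral sequence.

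\emph{Vanishing of $\h^0(\F(-1))$.} Assume for contradiction that $\h^0(\F(-1)) > 0$. A nonzero section yields a map $\O(1) \to \F$ whose image is a pure one-dimensional subsheaf of the form $\O_D(1)$ for some subcurve $D$ of the scheme-theoretic support of $\F$, of some degree $d$. Semi-stability of $\F$ forces $\chi(\O_D(1))/d \le 1/3$, and a direct computation with $\chi(\O_D(1)) = d + 1 - (d-1)(d-2)/2$ restricts $d$ to $\{5, 6\}$. The quotient $\F/\O_D(1)$ is then zero-dimensional; its long exact sequence, together with $\h^1(\F) = 0$, $\h^0(\F) = 2$ and the standard Riemann-Roch lower bound on $\h^0(\O_D(1))$, produces a contradiction in each case.

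\emph{Beilinson resolution.} Tensoring the Euler sequence $0 \to \Om^1(1) \to 3\O \to \O(1) \to 0$ with $\F$ and using that $\F \tensor \Om^1(1)$ is one-dimensional (so $\h^2 = 0$) together with $\chi(\F \tensor \Om^1(1)) = 3\chi(\F) - \chi(\F(1)) = -2$, we obtain $\h^1(\F \tensor \Om^1(1)) = \h^0(\F \tensor \Om^1(1)) + 2$. Set $a = \h^0(\F \tensor \Om^1(1))$. The Beilinson spectral sequence $\E_1^{p,q} = \h^q(\F \tensor \Om^{-p}(-p)) \tensor \O(p) \Rightarrow \F$, having only two nonzero rows ($q = 0, 1$) by the vanishings of $\h^0(\F(-1))$ and $\h^1(\F)$, assembles into the exact sequence
\[
0 \lra 4\O(-2) \oplus a\O(-1) \stackrel{\f}{\lra} (a+2)\O(-1) \oplus 2\O \lra \F \lra 0.
\]

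\emph{Reduction and block analysis of $\f$.} Write $\f$ in block form: the corner $\f_{12} \colon a\O(-1) \to (a+2)\O(-1)$ has scalar entries. Any nonzero entry of $\f_{12}$ permits cancellation of an $\O(-1)$ summand from both sides, lowering $a$. A minimal resolution is therefore either of type (i) with $a = 0$, or of type (ii) with $a = 1$ and $\f_{12} = 0$; the case $a \ge 2$ in a minimal resolution is excluded by exhibiting a destabilizing subsheaf of the cokernel, built from the kernel of the residual $\f_{11}$-block.

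\emph{Conditions on $\f$ and main obstacle.} In case (i), each of the three forbidden block forms of $\f$ corresponds to a specific destabilizing subsheaf or a zero-dimensional subsheaf of the cokernel, and conversely any semi-stable $\F$ admits a presentation avoiding all three; we verify this by reading off, in each bad block shape, the subsheaf of $\F$ coming from the ``small'' quotient. In case (ii), the semi-stability of $\f_{11}$ as a Kronecker $V$-module and the linear independence of the entries of $\f_{22}$ correspond, respectively, to the absence of a destabilizing subsheaf of the appropriate Hilbert polynomial and to the minimality of the presentation. The delicate part is precisely this matching between block-form degenerations of $\f$ and concrete destabilizing subsheaves of $\F$; it requires an exhaustive enumeration of possible subsheaves of small multiplicity (Hilbert polynomials $m+k$, $2m+k$, $3m+k$) and a verification that each forces a specific rank-drop in the block structure, and vice versa.
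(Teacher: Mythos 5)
Your overall strategy (exclude $\h^0(\F(-1))>0$ by slope considerations, run the Beilinson spectral sequence, then cancel the constant block) is sound and is essentially the method of the reference the paper actually relies on: the paper's own proof is a two-line citation of 6.4 and 4.3 of Maican's \emph{On two notions of semistability}, the second applied to the dual sheaf $\G=\F^\D(1)$ in $\M(6,4)$. Your first stage works, though not quite as written: for a quintic $D$ the quotient $\F/\O_D(1)$ has Hilbert polynomial $m+2$ and is \emph{not} zero-dimensional; the contradiction you want in both cases $d=5,6$ is simply $\h^0(\O_D(1))=\h^0(\O(1))=3>2=\h^0(\F)$, the last equality using $\h^1(\F)=0$. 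The Euler characteristic bookkeeping and the shape of the resolution $0\to 4\O(-2)\oplus a\O(-1)\to(a+2)\O(-1)\oplus2\O\to\F\to0$ are correct.

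The genuine gap is in your last paragraph, which is a description of what must be done rather than a proof. The exclusion of $a\ge2$ is only gestured at ("a destabilizing subsheaf built from the kernel of the residual $\f_{11}$-block"); the standard argument is that $\F$ surjects onto $\Coker(\f_{11})$, whose reduced Hilbert polynomial violates semi-stability, but one must also handle the case where $\f_{11}$ fails to be injective. More seriously, the equivalence between semi-stability of $\F$ and the stated conditions on $\f$ --- the three forbidden block shapes in case (i), and in case (ii) the semi-stability of $\f_{11}$ as a Kronecker module together with the linear independence of the entries of $\f_{22}$ --- is the actual content of the proposition, and you explicitly defer both directions to an "exhaustive enumeration" that you do not carry out. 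This enumeration is nontrivial: for each candidate destabilizing subsheaf $\E$ one must list its possible resolutions (there are seven in the analogous Proposition 3.3 of this paper), fit each into a commutative diagram with the resolution of $\F$, and show it forces one of the listed degenerate shapes; conversely one must show each degenerate shape genuinely produces a destabilizing sub- or quotient sheaf. Without this, the characterization --- in particular the exact list of three matrix shapes in (i) --- is unsupported. As it stands the proposal is a correct plan with the decisive case analysis missing.
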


\begin{proof}
The first statement follows from 6.4 \cite{maican}.
The rest of the proposition follows by duality from 4.3 op.cit.
\end{proof}

\noi
Let $\W_0 = \Hom(4\O(-2), 2\O(-1) \oplus 2\O)$ and let $W_0 \subset \W_0$
be the set of morphisms $\f$ from \ref{2.1}(i).
Let
\[
G_0 = (\Aut(4\O(-2)) \times \Aut(2\O(-1) \oplus 2\O))/\C^*
\]
be the natural group acting by conjugation on $\W_0$.
Let $X_0 \subset \M(6,2)$ be the set of stable-equivalence classes of sheaves
$\F$ as in \ref{2.1}(i). This set is open and dense.

\begin{prop}
\label{2.2}
There exists a categorical quotient of $W_0$ by $G_0$ and it is isomorphic to $X_0$.
\end{prop}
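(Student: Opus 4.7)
The plan is to exhibit the cokernel construction as a $G_0$-invariant morphism $\Phi\colon W_0 \to \M(6,2)$ with image $X_0$, and then verify that $(X_0,\Phi)$ satisfies the universal property of the categorical quotient by combining Simpson's coarse moduli property for $\M(6,2)$ with the functoriality of the Beilinson-type resolution \ref{2.1}(i).

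First I would check that $W_0$ is an open $G_0$-invariant subset of $\W_0$: the three forbidden matrix shapes define three $G_0$-invariant linear subspaces of $\W_0$, whose $G_0$-saturations form a closed subvariety whose complement is $W_0$. The cokernels of the morphisms $\f \in W_0$ assemble into a flat family of semi-stable sheaves on $\P^2$ with Hilbert polynomial $6m+2$ parametrised by $W_0$, so by Simpson's theorem we obtain a classifying morphism $\Phi \colon W_0 \to \M(6,2)$. By \ref{2.1} its set-theoretic image is $X_0$, and it is $G_0$-invariant by construction.

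To verify the universal property, let $\psi \colon W_0 \to Z$ be an arbitrary $G_0$-invariant morphism; I need to produce a unique $\tilde\psi \colon X_0 \to Z$ with $\psi = \tilde\psi \circ \Phi$. The key input is the existence of \'etale-local sections of $\Phi$: for each point of $X_0$, an \'etale neighborhood $U \to X_0$ should admit a morphism $\sigma \colon U \to W_0$ such that $\Phi \circ \sigma$ equals the structure map. Such a $\sigma$ is built from the universal sheaf $\F_U$ on $U \times \P^2$: semicontinuity combined with the cohomological conditions defining $X_0$ forces the three relevant direct images of $\F_U$ to vanish on $U$, and the Beilinson monad of $\F_U$ then assembles into a relative resolution of the form \ref{2.1}(i) whose defining morphism is $\sigma$. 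Any two such sections over the same base differ by an element of $G_0$, so $\psi \circ \sigma$ is independent of the choice of $\sigma$, and the resulting local morphisms $U \to Z$ glue uniquely to the required $\tilde\psi$.

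The main obstacle is the functorial construction of these local sections, i.e., the assembly of the Beilinson monad in families. This requires showing that the relevant direct images are locally free of constant rank (by cohomology and base change together with the vanishings built into the definition of $X_0$) and that the differentials of the Beilinson complex vary algebraically with the base, so that the spectral sequence degenerates to a length-one resolution uniformly on the cover. Once this is established, the existence of the categorical quotient and its identification with $X_0$ follow directly; the fact that $X_0$ contains properly semi-stable points is compatible with this, since the universal property of a categorical quotient does not require fibres to be $G_0$-orbits.
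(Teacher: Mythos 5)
Your overall strategy (invariant cokernel map plus local sections produced by the Beilinson construction, then descent of invariant morphisms) is the right general shape, but the way you propose to build the sections has a genuine gap. By Proposition \ref{2.3} \emph{all} properly semi-stable sheaves of $\M(6,2)$ land in $X_0$, and at a properly semi-stable point the moduli space is only a coarse moduli space of S-equivalence classes: there is no universal sheaf $\F_U$ on $U\times\P^2$ for any \'etale neighbourhood $U$ of such a point, so the section $\sigma\colon U\to W_0$ cannot be assembled as you describe. Relatedly, your claim that ``any two such sections over the same base differ by an element of $G_0$'' is false there: two morphisms of $W_0$ with the same image in $X_0$ have S-equivalent but in general non-isomorphic cokernels, hence lie in different $G_0$-orbits, with only their orbit closures meeting. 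That second point is reparable --- an invariant morphism to a (separated) variety is constant on orbit-closure equivalence classes, and the paper invokes 4.2.1 of \cite{drezet-maican} precisely for the statement that the fibres of $\rho$ are exactly these classes --- but the reliance on a universal sheaf over $X_0$ is fatal to the construction of the sections.

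The paper's method (3.1.6 of \cite{drezet-maican}) avoids this by never seeking sections over $X_0$ itself: the Beilinson construction is applied to the tautological family over the open subset of the Quot scheme whose good quotient is $X_0$, producing there an algebraic lift to $W_0$ of the quotient map; an invariant morphism $W_0\to Z$ then pulls back to a $\operatorname{PGL}$-invariant morphism on the Quot scheme and descends to $X_0$ by the universal property of the good quotient. What must be checked is that resolution \ref{2.1}(i) arises \emph{naturally} (i.e.\ in families) from the Beilinson spectral sequence --- the paper does this for the dual sheaf $\G=\F^\D(1)$ in $\M(6,4)$ --- and your remarks about cohomology and base change and degeneration of the Beilinson complex are the correct technical content for that step; they just have to be run over the Quot scheme rather than over an \'etale cover of $X_0$. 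Finally, your one-line assertion that the $G_0$-saturations of the three forbidden linear subspaces are closed deserves an argument (saturations under a group action are only constructible in general), though this is a minor point compared with the issue above.
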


\begin{proof}
We have a canonical morphism $\rho \colon W_0 \to X_0$ mapping $\f$ to the stable-equivalence
class of $\Coker(\f)$. As at 4.2.1 \cite{drezet-maican}, $\rho(\f_1) = \rho(\f_2)$ if and only if
$\overline{G \f_1} \cap \overline{G \f_2} \neq \emptyset$.
Thus any $G_0$-invariant morphism of varieties $f \colon W_0 \to Y$ factors through a unique map
$g \colon X_0 \to Y$. To show that $\rho$ is a categorical quotient map we use the method
of 3.1.6 \cite{drezet-maican}. For any sheaf $\F$ giving a point in $X_0$ we need to obtain
resolution 2.1(i) in a natural manner from the Beilinson spectral sequence converging to $\F$.
We prefer, instead, to work with the Beilinson sequence of the dual sheaf $\G= \F^\D(1)$,
which gives a point in $\M(6,4)$.
Diagram (2.2.3) \cite{drezet-maican} takes the form
\[
\xymatrix
{
2\O(-2) & 0 & 0 \\
0 & 2\O(-1) \ar[r]^-{\f_4} & 4\O
}.
\]
The exact sequence (2.2.5) \cite{drezet-maican} takes the form
\[
0 \lra 2\O(-2) \stackrel{\f_5}{\lra} \Coker(\f_4) \lra \G \lra 0.
\]
According to (2.2.4) \cite{drezet-maican}, $\f_4$ is injective.
We now easily get the exact sequence dual to 2.1(i):
\[
0 \lra 2\O(-2) \oplus 2\O(-1) \lra 4\O \lra \G \lra 0.
\qedhere
\]
\end{proof}

\begin{prop}
\label{2.3}
If $\F$ is properly semi-stable and $\PP_{\F}(t)= 6t+2$, then $\F$ gives a point in $X_0$.
\end{prop}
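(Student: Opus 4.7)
The plan is to reduce to the Jordan--Hölder factors of $\F$, which necessarily lie in $\M(3,1)$, and then to apply Proposition \ref{2.1}.

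First I would determine the Jordan--Hölder type. The reduced Hilbert polynomial of $\F$ is $m + 1/3$, and the only multiplicity $0 < r < 6$ for which $r/3$ is an integer is $r=3$. Consequently the Jordan--Hölder filtration of $\F$ has exactly two stable factors $\F_1, \F_2$, each with Hilbert polynomial $3m+1$, and $\F$ fits into a short exact sequence
\[
0 \lra \F_1 \lra \F \lra \F_2 \lra 0.
\]

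Second, I would invoke the standard resolution for sheaves in $\M(3,1)$, obtained by a Beilinson-type argument as in \cite{maican}: every $\F_i \in \M(3,1)$ admits a minimal locally free resolution
\[
0 \lra 2\O(-2) \lra \O(-1) \oplus \O \lra \F_i \lra 0.
\]
Twisting this by $\O(-1)$, and separately tensoring it with the locally free sheaf $\Om^1(1)$, and using the cohomology of small twists of $\O$ and $\Om^1$ on $\P^2$ (in particular $\h^0(\Om^1(1)) = \h^0(\Om^1(-1)) = \h^1(\Om^1(-1)) = 0$), the associated long exact sequences yield
\[
\h^0(\F_i(-1)) = 0, \qquad \h^1(\F_i) = 0, \qquad \h^0(\F_i \tensor \Om^1(1)) = 0.
\]

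Third, I would transfer these vanishings to $\F$. Applying the long exact cohomology sequence to the extension above, and to its tensor product with the locally free sheaf $\Om^1(1)$, immediately gives the same three vanishings for $\F$ itself. By Proposition \ref{2.1}, $\F$ then admits a resolution of the form (i) with $\f$ not equivalent to any of the three forbidden shapes, and therefore gives a point in $X_0$.

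The main obstacle is the second step: the resolution for all of $\M(3,1)$. This is standard and parallel to the Beilinson argument appearing in the proof of Proposition \ref{2.2}, but it needs to be justified uniformly for every stable sheaf in $\M(3,1)$, not just the generic ones.
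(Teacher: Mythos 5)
The paper states Proposition \ref{2.3} without any proof, so there is nothing to compare against; judged on its own, your argument is correct and complete in outline. The reduction to the Jordan--H\"older factors is right: the only admissible slope forces exactly two stable factors in $\M(3,1)$, the three vanishings pass from the factors to $\F$ through the long exact sequences (tensoring the extension with the locally free sheaf $\Om^1(1)$ is harmless), and Proposition \ref{2.1} then places $\F$ in $X_0$. The one soft spot is the logical order in your second step: the resolution $0 \to 2\O(-2) \to \O(-1)\oplus\O \to \F_i \to 0$ for \emph{all} of $\M(3,1)$ is itself normally obtained from the Beilinson spectral sequence \emph{after} one knows $\h^0(\F_i(-1))=0$, $\h^1(\F_i)=0$, $\h^0(\F_i\tensor\Om^1(1))=0$, so deriving the vanishings from the resolution is circular unless you cite the classification of $\M(3,1)$ as an external fact. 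The cleaner route is to prove the three vanishings for the stable factors directly from stability, which makes the resolution unnecessary: a non-zero section of $\F_i(-1)$ gives a subsheaf $\O_Z$ with $Z$ a curve of degree $d\le 3$ and slope $\chi(\O_Z)/d \ge 0 > -2/3$, contradicting stability; $\h^1(\F_i)=\h^0(\F_i^{\scriptscriptstyle\operatorname{D}})$ by duality and the same argument applies to $\F_i^{\scriptscriptstyle\operatorname{D}}$, which has slope $-1/3$; and the Euler sequence identifies $\H^0(\F_i\tensor\Om^1(1))$ with the kernel of the multiplication map $V^*\tensor\H^0(\F_i)\to\H^0(\F_i(1))$, whose non-injectivity would force the image of the unique section of $\F_i$ to be $\O_L$ for a line $L$, again destabilising. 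With that substitution (or with an explicit reference for the $\M(3,1)$ resolution), your proof stands.
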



\section{The codimension $3$ stratum}

\noi
Let $\W_1 = \Hom(4\O(-2) \oplus \O(-1), 3\O(-1) \oplus 2\O)$ and let $W_1 \subset \W_1$
be the set of morphisms $\f$ from \ref{2.1}(ii). Let
\[
G_1 = (\Aut(4\O(-2) \oplus \O(-1)) \times \Aut(3\O(-1) \oplus 2\O))/\C^*
\]
be the natural group acting by conjugation on $\W_1$.
Let $X_1 \subset \M(6,2)$ be the set of stable-equivalence classes
of sheaves $\F$ as in \ref{2.1}(ii).

\begin{prop}
\label{3.1}
There exists a geometric quotient $W_1/G_1$ and it is a proper open subset
inside a fibre bundle over $\N(3,4,3) \times \P^2$ with fibre $\P^{20}$.
Moreover, $W_1/G_1$ is isomorphic to $X_1$. In particular, $X_1$ is irreducible
and has codimension $3$.
\end{prop}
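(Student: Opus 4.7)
My approach follows the pattern of the analogous arguments in \cite{mult_six_one}. The first step is to construct a $G_1$-invariant morphism $\pi : W_1 \to \N(3,4,3) \times \P^2$ sending $\f$ to $([\f_{11}], [\f_{22}])$, where $[\f_{11}]$ is the class of the semi-stable Kronecker module $\f_{11}$ and $[\f_{22}]$ is the common zero of the linearly independent entries of $\f_{22}$, using the canonical identification
\[
\{\f_{22} \in \Hom(\O(-1), 2\O) \text{ with linearly independent entries}\} / (\Aut(\O(-1)) \times \Aut(2\O)) \isom \Grass(2, V^*) = \P^2,
\]
with $V^* = \operatorname{H}^0(\P^2, \O(1))$. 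The hypothesis $\f_{12} = 0$ guarantees that this assignment is $G_1$-invariant.

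The heart of the proof is a fibre analysis. Fixing representatives $\f_{11}, \f_{22}$, the remaining datum $\f_{21} \in \Hom(4\O(-2), 2\O) \isom \C^{48}$ is acted upon by the stabilizer of $(\f_{11}, \f_{22})$ in $G_1$ through the translation
\[
\f_{21} \mapsto \f_{21} + \f_{22} g_{21} + h_{21} \f_{11}, \qquad g_{21} \in \Hom(4\O(-2), \O(-1)), \ h_{21} \in \Hom(3\O(-1), 2\O),
\]
together with a central $\C^*$-action. The crucial step is to verify that the associated linear map $\a : \Hom(4\O(-2), \O(-1)) \oplus \Hom(3\O(-1), 2\O) \to \Hom(4\O(-2), 2\O)$ has a $3$-dimensional kernel. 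Since $\f_{22}$ is injective, rewriting $\a(g_{21}, h_{21})=0$ identifies $\ker \a$ with the set of $h \in \Hom(3\O(-1), 2\O)$ for which $h \f_{11}$ factors through $\f_{22}$; successively applying $\Hom(3\O(-1), -)$ to $0 \to \O(-1) \stackrel{\f_{22}}{\lra} 2\O \to \I_p(1) \to 0$ and $\Hom(-, \I_p(1))$ to $4\O(-2) \stackrel{\f_{11}}{\lra} 3\O(-1) \to \E_0 \to 0$ yields
\[
\dim \ker \a = \dim \Hom(3\O(-1), \O(-1)) + \dim \Hom(\E_0, \I_p(1)) = 3 + 0 = 3,
\]
where $p \in \P^2$ is the point attached to $\f_{22}$, $\E_0 = \Coker \f_{11}$, and $\Hom(\E_0, \I_p(1)) = 0$ because $\I_p(1)$ is torsion-free while $\E_0$ has one-dimensional support (the latter for stable $\f_{11}$, with a small extra argument at the strictly semi-stable boundary). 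Hence $\C^{48}/\Im \a$ has dimension $21$, and the central $\C^*$ projectivizes this to the claimed $\P^{20}$-fibre. Globalizing via local sections of the universal Kronecker module and of $\f_{22}$ over a Zariski-open cover of $\N(3,4,3) \times \P^2$ then produces a locally trivial $\P^{20}$-bundle $Z \to \N(3,4,3) \times \P^2$.

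Finally, I will identify $W_1/G_1$ with the open subset of $Z$ consisting of those $\f$ yielding semi-stable sheaves satisfying Proposition \ref{2.1}(ii); this embedding is proper because the requisite conditions (injectivity of $\f$ and semi-stability of $\Coker \f$) fail on a nonempty closed subset of $Z$. The identification $W_1/G_1 \isom X_1$ as a geometric quotient will then follow, exactly as in Proposition \ref{2.2}, by applying 3.1.6 and 4.2.1 of \cite{drezet-maican} to the Beilinson spectral sequence of the dual sheaf $\G = \F^\D(1)$. The dimension count $\dim X_1 = \dim \N(3,4,3) + \dim \P^2 + \dim \P^{20} = 12 + 2 + 20 = 34 = \dim \M(6,2) - 3$ gives the asserted codimension, and irreducibility is inherited from $Z$. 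The most delicate point will be the kernel computation in the fibre analysis, resting on the vanishing $\Hom(\E_0, \I_p(1)) = 0$; the secondary technical difficulty will be to globalize the fibre structure uniformly across the Kronecker moduli space.
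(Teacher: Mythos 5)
Your proposal is correct and follows essentially the same route as the paper: the fibre-bundle structure over $\N(3,4,3)\times\P^2$ via the affine action $\f_{21}\mapsto\f_{21}+\f_{22}u+v\f_{11}$ (the paper cites 2.2.2 of \cite{mult_five} for this, while you work out the $3$-dimensional kernel and the rank-$21$ quotient explicitly, and your count matches the paper's rank-$21$ bundle $Q$), and the identification $W_1/G_1\isom X_1$ via the Beilinson spectral sequence of $\G=\F^\D(1)$. The only part you leave implicit that the paper actually carries out is the derivation of the dual of resolution \ref{2.1}(ii) from the Beilinson tableau of $\G$ (computing $\Ker(\f_1)\isom\O(-3)$ and cancelling it using $\H^1(\G)=0$), but the method you name is exactly the one used.
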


\begin{proof}
The first statement can be proved identically as 2.2.2 \cite{mult_five}.
Let $W_1'$ be the locally closed subset of $\W_1$ given by the following conditions:
$\f_{12}=0$, $\f_{11}$ is semi-stable as a Kronecker $V$-module, $\f_{22}$ has linearly
independent entries. Let $\Sigma \subset W_1'$ be the $G_1$-invariant subset
given by the condition
\[
\f_{21}= \f_{22} u + v \f_{11}, \quad u \in \Hom(4\O(-2), \O(-1)), \quad v \in \Hom(3\O(-1),2\O).
\]
As at loc.cit., we can construct a vector bundle $Q$ over $\N(3,4,3) \times \P^2$ of rank $21$
such that $\P(Q)$ is a geometric quotient of $W_1' \setminus \Sigma$ modulo $G_1$.
Then $W_1/G_1$ is a proper open subset of $\P(Q)$.

Let $\F$ give a point in $X_1$ and let $\G = \F^\D(1)$.
The Beilinson tableau (2.2.3) \cite{drezet-maican} for $\G$ takes the form
\[
\xymatrix
{
2\O(-2) \ar[r]^-{\f_1} & \O(-1) & 0 \\
0 & 3\O(-1) \ar[r]^-{\f_4} & 4\O
}.
\]
As in the proof of 2.2.3 \cite{mult_five}, we have $\Ker(\f_1) \isom \O(-3)$ and an exact sequence
\[
0 \lra \O(-3) \oplus 3\O(-1) \lra 4\O \lra \G \lra \Coker(\f_1) \lra 0,
\]
yielding the resolution
\[
0 \lra \O(-3) \lra \O(-3) \oplus 2\O(-2) \oplus 3\O(-1) \lra \O(-1) \oplus 4\O \lra \G \lra 0.
\]
Since $\H^1(\G)=0$, we see that $\O(-3)$ can be canceled yielding the dual of
resolution 2.1(ii).
\end{proof}

\begin{prop}
\label{3.2}
The sheaves $\G$ from $X_1^\D$ are precisely the non-split extension sheaves of the form
\[
0 \lra \E \lra \G \lra \C_x \lra 0,
\]
where $\C_x$ is the structure sheaf of a point $x \in \P^2$, $\E$ gives a point in $\M(6,3)$
and satisfies the conditions $\h^0(\E(-1))=0$, $\h^1(\E)=1$.

The generic sheaves $\G$ from $X_1^\D$ are of the form $\O_C(3)(-P_1- \cdots - P_6 + P_7)$,
where $P_i$ are seven distinct points on a smooth sextic curve $C \subset \P^2$
and $P_1, \ldots, P_6$ are not contained in a conic curve.

By duality, the generic sheaves in $X_1$ are of the form $\O_C(1)(P_1 + \cdots + P_6 - P_7)$.
\end{prop}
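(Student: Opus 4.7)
The plan is to distil the extension structure from the resolution of $\G \in X_1^\D$ already obtained in the proof of Proposition 3.1, verify that every such extension arises in this way, and finally identify the generic members. For the forward direction, let $\G$ give a point in $X_1^\D$. The proof of Proposition 3.1 produces an exact sequence
\[
0 \lra \O(-3) \oplus 3\O(-1) \lra 4\O \lra \G \lra \Coker(\f_1) \lra 0
\]
with $\f_1 : 2\O(-2) \to \O(-1)$ satisfying $\Ker(\f_1) \isom \O(-3)$. Hence $\f_1$ is represented by two linearly independent linear forms and $\Coker(\f_1) \isom \C_x$ for $x \in \P^2$ their common zero. Setting $\E = \Ker(\G \to \C_x)$ yields the extension $0 \to \E \to \G \to \C_x \to 0$ and a resolution $0 \to \O(-3) \oplus 3\O(-1) \to 4\O \to \E \to 0$. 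A short Euler characteristic computation gives $\PP_\E(m) = 6m+3$, and the long exact sequences obtained by cutting the resolution yield $\h^0(\E(-1))=0$ and $\h^1(\E)=1$. Semi-stability of $\E$ is a consequence of the shape of its resolution, along the lines of the analysis in \cite{maican}, and non-splitness is forced because purity of the semi-stable $\G$ forbids $\C_x \subset \G$.

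Conversely, given a non-split extension $0 \to \E \to \G \to \C_x \to 0$ with $\E \in \M(6,3)$ and the stated cohomology, the Beilinson argument applied to $\E$ (as in \cite{maican} or 2.2.3 of \cite{mult_five}) produces its resolution $0 \to \O(-3) \oplus 3\O(-1) \to 4\O \to \E \to 0$. Splicing this with the standard resolution $0 \to \O(-3) \to 2\O(-2) \to \O(-1) \to \C_x \to 0$ via the horseshoe lemma assembles a four-term resolution of $\G$ of the same shape as the one in the proof of Proposition 3.1. Cancelling the common $\O(-3)$ summand produces the dual of resolution 2.1(ii), placing $\G$ in $X_1^\D$.

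For the generic description, a generic $\E$ in this stratum is a line bundle on its support curve $C$, the determinantal sextic cut out by the $4 \times 4$ maximal minor of the matrix of $\f$; genericity forces $C$ smooth. Since $\deg \E = 12$, we may write $\E \isom \O_C(3)(-D)$ with $D = P_1 + \cdots + P_6$ an effective divisor of degree $6$. The sequence $0 \to \O_{\P^2}(-3) \to \O_{\P^2}(3) \to \O_C(3) \to 0$ identifies $\h^0(\E)$ with the space of cubics through $D$; the equality $\h^0(\E)=4$ (equivalent to $\h^1(\E)=1$) is the statement that the six points impose independent conditions on cubics, which by Cayley--Bacharach is equivalent to $P_1, \ldots, P_6$ not lying on a conic. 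For $P_7 \in C$ at which $\E$ is locally free, $\Ext^1(\C_{P_7}, \E) \isom \C$ and its unique non-split class is realised by $0 \to \O_C(3)(-D) \to \O_C(3)(-D+P_7) \to \C_{P_7} \to 0$. The description of the generic sheaves in $X_1$ then follows from the duality of line bundles on $C$, under which $\O_C(D)^\D(1) \isom \O_C(4)(-D)$ (via Grothendieck--Serre duality and $\omega_C \isom \O_C(3)$).

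The main obstacle I anticipate is the identification of the generic $\E$ with a line bundle of the claimed form: one must ensure that for a generic $\f$ of the required type, the support $C$ is smooth and $\E$ is locally free on $C$. A dimension count ($\dim X_1 = 34$, matching $27 + 7$ for the parameter space of smooth sextics equipped with seven points) together with the irreducibility of $X_1$ from Proposition 3.1 makes the identification plausible, but a rigorous proof may require either a direct construction of $\f$ from the divisorial data or an invocation of irreducibility to transfer genericity between the two parametrisations.
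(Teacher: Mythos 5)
Your treatment of the extension structure follows essentially the same route as the paper (the four--term Beilinson sequence in one direction, the horseshoe lemma in the other), but the converse direction has a real gap at the cancellation step. After splicing the resolutions of $\E$ and of $\C_x$ you obtain
\[
0 \lra \O(-3) \lra \O(-3) \oplus 2\O(-2) \oplus 3\O(-1) \lra \O(-1) \oplus 4\O \lra \G \lra 0,
\]
and you simply ``cancel the common $\O(-3)$ summand''. This is possible only if the constant component $\O(-3) \to \O(-3)$ is non-zero, and that is precisely where the non-splitness of the extension must enter: for a split extension this constant vanishes and $\G$ does not acquire the resolution dual to 2.1(ii). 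The paper proves the non-vanishing using $\Ext^1(\C_x, 4\O)=0$ and the argument of 2.3.2 of \cite{mult_five}; some such argument is indispensable. You also invoke the horseshoe lemma without checking that the surjection $\O(-1) \to \C_x$ lifts to $\G$; locally free sheaves are not projective objects here, so this requires the vanishing $\H^1(\E(1))=0$, which holds but must be recorded.

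The second and more serious problem is in the generic description. The appeal to Cayley--Bacharach is incorrect: six distinct points fail to impose independent conditions on cubics only when five of them are collinear; lying on a conic is no obstruction (six points on a smooth conic $Q$ still impose independent conditions on cubics, as one sees by restricting $\H^0(\O_{\P^2}(3))$ to $Q$). Hence $\h^1(\E)=1$ is \emph{not} equivalent to $P_1,\ldots,P_6$ not lying on a conic. The conic condition in the statement instead encodes $\h^0(\E(-1))=0$: from the sequences $0 \to \O(-4) \to \I_D(2) \to \J_D(2) \to 0$ and $0 \to \I_D(2) \to \O(2) \to \O_D \to 0$ one gets $\h^0(\O_C(2)(-D)) = \h^0(\I_D(2))$, which vanishes exactly when $D$ lies on no conic, and this is the condition governing the shape of the Beilinson resolution of $\E$ (equivalently, the semi-stability of $\f_{11}$ as a Kronecker module). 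Since you never verify $\h^0(\E(-1))=0$ for $\E=\O_C(3)(-D)$, your argument does not establish the stated equivalence in either direction. Finally, the identification of the generic $\E$ with a sheaf of the claimed form --- which you yourself flag as incomplete --- is closed in the paper not by a dimension count but by citing propositions 4.5 and 4.6 of \cite{modules-alternatives}: the cokernels of morphisms $3\O(-1)\to 4\O$ whose maximal minors have no common factor are precisely the sheaves $\I_Z(3)$ with $Z$ of length six not contained in a conic. This yields the ``precisely'' statement on an explicit open subset $X_{10}$ and transfers genericity without further argument.
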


\begin{proof}
Assume that $\G$ gives a point in $X_1^\D$, i.e. $\G \isom \Coker(\f^\T)$ for some morphism
$\f$ as in 2.1(ii). From the snake lemma we get an extension
\[
0 \lra \E \lra \G \lra \C_x \lra 0,
\]
where $x$ is the common zero of the entries of $\f_{22}$ and $\E$ has a resolution
\[
0 \lra \O(-3) \oplus 3\O(-1) \stackrel{\psi}{\lra} 4\O \lra \E \lra 0,
\]
$\psi_{12}= \f_{11}^\T$.
From 5.3 \cite{maican} we know that $\E$ gives a point in $\M(6,3)$ and satisfies the
cohomological conditions from the proposition.
Conversely, any such sheaf $\E$ is the cokernel of an injective morphism $\psi$ for which
$\psi_{12}$ is semi-stable as a Kronecker $V$-module.
Given a non-split extension of $\C_x$ by $\E$, we apply the horseshoe lemma to the above
resolution of $\E$ and to the standard resolution of $\C_x$ tensored with $\O(-1)$.
The map $\O(-1) \to \C_x$ lifts to a map $\O(-1) \to \G$ because $\H^1(\E(1))=0$.
We obtain a resolution
\[
0 \lra \O(-3) \lra \O(-3) \oplus 2\O(-2) \oplus 3\O(-1) \lra \O(-1) \oplus 4\O \lra \G \lra 0.
\]
Since $\Ext^1(\C_x, 4\O) =0$, we can deduce, as in the proof of 2.3.2 \cite{mult_five},
that the morphism $\O(-3) \to \O(-3)$ is non-zero.
We cancel $\O(-3)$ to get the dual to resolution 2.1(ii).

Let $X_{10} \subset X_1$ be the open subset of points given by sheaves $\F = \Coker(\f)$
for which the maximal minors of $\f_{11}$ have no common factor.
Let $X_{10}^\D \subset \M(6,4)$ be the dual subset.
According to \cite{modules-alternatives}, propositions 4.5 and 4.6, the sheaves $\Coker(\psi_{12})$,
where the maximal minors of $\psi_{12}$ have no common factor, are precisely the twisted ideal sheaves
$\I_Z(3)$, where $Z \subset \P^2$ is a zero-dimensional scheme of length $6$
not contained in a conic curve.
It follows that the sheaves $\G$ giving points in $X_{10}^\D$ are precisely the non-split extensions
of $\C_x$ by $\J_Z(3)$, where $\J_Z \subset \O_C$ is the ideal sheaf of a subscheme $Z$ as above
contained in a sextic curve $C$.
Take $C$ to be smooth and take $Z$ to be the union of six distinct points different from $x$.
Then $\G \isom \O_C(3)(-P_1 - \cdots - P_6 + x)$.
\end{proof}

\begin{prop}
\label{3.3}
Let $\F$ be a sheaf giving a point in $\M(6,2)$ and satisfying the conditions
$\h^0(\F(-1))=0$, $\h^1(\F)=1$.
Then $\h^0(\F \tensor \Om^1(1))= 1$ or $2$. The sheaves in the first case are precisely
the sheaves with resolution of the form
\[
0 \lra \O(-3) \oplus \O(-2) \oplus \O(-1) \stackrel{\f}{\lra} 3\O \lra \F \lra 0,
\]
where $\f$ is not equivalent to a morphism of any of the following forms:
\[
\f_1 = \left[
\ba{ccc}
\star & \star & \star \\
\star & \star & 0 \\
\star & \star & 0
\ea
\right], \qquad \f_2 = \left[
\ba{ccc}
\star & \star & \star \\
\star & 0 & \star \\
\star & 0 & \star
\ea
\right], \qquad \f_3 = \left[
\ba{ccc}
\star & \star & \star \\
\star & \star & \star \\
\star & 0 & 0
\ea
\right].
\]
\end{prop}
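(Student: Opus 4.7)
The plan has three parts: first, establish the dichotomy $\h^0(\F \tensor \Om^1(1)) \in \{1, 2\}$; second, derive the claimed resolution in the case $\h^0(\F \tensor \Om^1(1)) = 1$; and third, rule out the three forbidden matrix forms via semi-stability. The lower bound $\h^0(\F \tensor \Om^1(1)) \ge 1$ follows by contraposition: if this number were zero, the Beilinson-tableau argument used in Proposition \ref{2.1} would force $\h^1(\F) = 0$, contradicting the hypothesis. For the upper bound $\h^0(\F \tensor \Om^1(1)) \le 2$, I would invoke a general numerical restriction from \cite{maican} in the spirit of how 6.4 op.\ cit.\ is used in Proposition \ref{2.1}, noting that $\h^0(\F \tensor \Om^1(1)) \ge 3$ would force $\h^0(\F(-1)) \ge 1$, contradicting the hypothesis.

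For the resolution I would work with the dual sheaf $\G = \F^\D(1) \in \M(6,4)$ as in the proofs of Propositions \ref{2.2} and \ref{3.1}. Serre duality for the pure one-dimensional sheaf $\F$ yields
\[
\h^0(\G) = 4, \quad \h^1(\G) = 0, \quad \h^0(\G(-1)) = 1, \quad \h^1(\G(-1)) = 3, \quad \h^0(\G(-2)) = 0, \quad \h^1(\G(-2)) = 8.
\]
Inserting these into the Beilinson tableau (2.2.3) \cite{drezet-maican} for $\G$, running the spectral sequence to $E_2$, and applying the horseshoe lemma together with the cancellation of common summands exactly as in the proof of Proposition \ref{3.1}, I would obtain an exact sequence which, dualised, yields the resolution
\[
0 \lra \O(-3) \oplus \O(-2) \oplus \O(-1) \stackrel{\f}{\lra} 3\O \lra \F \lra 0.
\]

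The main obstacle is the third step: showing that the three forms $\f_1, \f_2, \f_3$ are excluded precisely by the semi-stability requirement. Each of these forms contains a block of zeros that decomposes the two-term complex $\O(-3) \oplus \O(-2) \oplus \O(-1) \to 3\O$ into a subcomplex and a quotient complex; the snake lemma then produces a short exact sequence $0 \to \F' \to \F \to \F'' \to 0$. A direct Hilbert-polynomial computation shows that the $\f_1$ subcomplex yields a subsheaf isomorphic to the structure sheaf $\O_L$ of a line, of slope $1$; the $\f_2$ subcomplex yields a subsheaf isomorphic to the structure sheaf $\O_C$ of a conic, of slope $1/2$; and the $\f_3$ subcomplex gives rise to a destabilising subsheaf of Hilbert polynomial $3m+2$ and slope $2/3$, arising as the kernel of a surjection from $\F$ onto the structure sheaf of a cubic. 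Since $\mu(\F) = 1/3$, each of these strictly destabilises $\F$, so semi-stability forces $\f$ to avoid these three patterns. The converse direction---that every $\f$ not equivalent to these three forms does yield a semi-stable $\F$ with the prescribed cohomological invariants---is the delicate part: one must classify all possible destabilising subsheaves of $\F$ via the subcomplexes of the resolution that they induce, following the template of 2.2.2 \cite{mult_five} and the proof of Proposition \ref{3.1}, and verify that each such subcomplex matches one of the three excluded patterns.
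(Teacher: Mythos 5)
Your identification of the destabilising subsheaves attached to the three forbidden forms ($\O_L$ of slope $1$ for $\f_1$, $\O_C$ of slope $1/2$ for $\f_2$, and a subsheaf of Hilbert polynomial $3t+2$ for $\f_3$) is correct and agrees with what the paper uses, and your overall strategy (Beilinson monad to produce the resolution, then semi-stability to constrain $\f$) is the paper's. Still, there are two genuine gaps. First, your upper bound $\h^0(\F \tensor \Om^1(1)) \le 2$ rests on an unspecified ``general numerical restriction'' from \cite{maican} to the effect that $\h^0(\F \tensor \Om^1(1)) \ge 3$ forces $\h^0(\F(-1)) \ge 1$; no such ready-made statement is invoked in the paper, which instead argues concretely: the Beilinson free monad yields a resolution
\[
0 \lra \O(-3) \oplus \O(-2) \oplus m\O(-1) \stackrel{\f}{\lra} (m-1)\O(-1) \oplus 3\O \lra \F \lra 0
\]
with $\f_{13}=0$, the surjection of $\F$ onto $\Coker(\f_{11},\f_{12})$ gives $m \le 3$, and $m=3$ is excluded because that cokernel would be a quotient of $\F$ of slope $-1/3 < \pp(\F)$. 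You would need to supply an argument of this kind rather than a citation. (Incidentally, the paper derives this resolution directly for $\F$, not for the dual sheaf $\G$; your detour through $\G$ is workable but would have to produce a twist $\O(2)$ that does not appear in the Beilinson tableau of $\G$, so it is not the path of least resistance.)

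Second, and more seriously, the converse direction --- which you rightly flag as the delicate part --- is left entirely undone, and the sketch you give of how to complete it is not quite how the argument can go. It does not suffice to say that every destabilising subsheaf ``induces a subcomplex matching one of the three excluded patterns.'' The paper first replaces a destabilising subsheaf by a semi-stable one $\E$ with $\h^0(\E) < \h^0(\F)$, so that $\E$ gives a point in $\M(r,1)$ or $\M(r,2)$ for some $1 \le r \le 5$; it uses Proposition \ref{2.3} to rule out $\PP_{\E}(t)=3t+1$; it records that $\h^0(\E(-1))=0$ and $\h^0(\E \tensor \Om^1(1)) \le 1$; and only then does it import from \cite{drezet-maican} and \cite{mult_five} the resulting list of seven possible resolutions of $\E$. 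Moreover, when one lifts $\E \subset \F$ to a map of resolutions, in three of the seven cases the induced maps $\a$ and $\b$ are \emph{not} injective, so no subcomplex of the resolution of $\F$ is produced; one must instead analyse $\Ker(\a) \isom \Ker(\b)$ and either still conclude $\f \sim \f_3$ or reach a contradiction (for instance, $\O(-1)$ cannot embed in $3\O(-2)$). Without the classification of semi-stable sheaves of multiplicity at most five as input, your plan has no mechanism for enumerating the candidate subsheaves, so this step cannot be completed as described.
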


\begin{proof}
Let $\F$ give a point in $\M(6,2)$ and satisfy the cohomological conditions from the
hypothesis. Write $m= \h^0(\F \tensor \Om^1(1))$.
As in the proof of 2.1.4 \cite{mult_five}, the Beilinson free monad for $\F$ leads to a
resolution
\[
0 \lra \O(-3) \oplus \O(-2) \oplus m\O(-1) \stackrel{\f}{\lra} (m-1)\O(-1) \oplus 3\O \lra \F \lra 0
\]
in which $\f_{13}=0$.
As $\F$ maps surjectively onto $\Coker(\f_{11},\f_{12})$, we have $m \le 3$.
If $m = 3$, then $\Coker(\f_{11},\f_{12})$ has slope $-1/3$, so the semi-stability of $\F$
gets contradicted. Thus $m=1$ or $2$. Assume for the rest of this proof that $m=1$.
We have a resolution
\[
0 \lra \O(-3) \oplus \O(-2) \oplus \O(-1) \stackrel{\f}{\lra} 3\O \lra \F \lra 0.
\]
The conditions imposed on $\f$ follow from the semi-stability of $\F$.
Conversely, we assume that $\F$ has a resolution as in the proposition and we need to show that
there are no destabilising subsheaves. Assume that $\E \subset \F$ is a destabilising subsheaf.
We may take $\E$ to be semi-stable.
As $\F$ is generated by global sections, we have $\h^0(\E) < \h^0(\F)$.
Thus $\E$ gives a point in $\M(r,1)$ or $\M(r,2)$ for some $r$, $1 \le r \le 5$.
According to \ref{2.3}, the situation in which $\PP_{\E}(t)=3t+1$ is unfeasible.
Moreover, we have $\h^0(\E(-1))=0$, $\h^0(\E \tensor \Om^1(1)) \le 1$.
From the results in \cite{drezet-maican} and \cite{mult_five} we see that $\E$ may have
one of the following resolutions:
\[
\tag{1}
0 \lra \O(-1) \lra \O \lra \E \lra 0,
\]
\[
\tag{2}
0 \lra \O(-2) \lra \O \lra \E \lra 0,
\]
\[
\tag{3}
0 \lra \O(-2) \oplus \O(-1) \lra 2\O \lra \E \lra 0,
\]
\[
\tag{4}
0 \lra 2\O(-2) \lra 2\O \lra \E \lra 0,
\]
\[
\tag{5}
0 \lra 2\O(-2) \oplus \O(-1) \lra \O(-1) \oplus 2\O \lra \E \lra 0,
\]
\[
\tag{6}
0 \lra 3\O(-2) \lra \O(-1) \oplus 2\O \lra \E \lra 0,
\]
\[
\tag{7}
0 \lra 3\O(-2) \oplus \O(-1) \lra 2\O(-1) \oplus 2\O \lra \E \lra 0.
\]
Each of these resolutions must fit into a commutative diagram like diagram (*) at 3.1 \cite{mult_six_one}
in which $\a$ is injective on global sections.
For the first four resolutions $\a$ must be injective and we get the contradictory conclusions
that $\f \sim \f_1$, $\f \sim \f_2$ or $\f \sim \f_3$.
If $\E$ has resolution (5), then $\b$ cannot be injective, hence $\a$ is not injective,
hence $\Ker(\a) \isom \Ker(\b) \isom \O(-1)$ and we conclude, as in the case of resolution (4),
that $\f \sim \f_3$. If $\E$ has resolution (6), then, again, $\Ker(\a) \isom \O(-1) \isom \Ker(\b)$,
which is absurd, because $\O(-1)$ cannot be isomorphic to a subsheaf of $3\O(-2)$.
For resolution (7) we arrive at a contradiction in a similar manner.
\end{proof}

\noi
Let $\W_2 = \Hom(\O(-3) \oplus \O(-2) \oplus \O(-1), 3\O)$ and let $W_2 \subset \W_2$
be the set of morphisms $\f$ from proposition \ref{3.3}. Let
\[
G_2 = (\Aut(\O(-3) \oplus \O(-2) \oplus \O(-1)) \times \Aut(3\O))/\C^*
\]
be the natural group acting by conjugation on $\W_2$.
Let $X_2 \subset \M(6,2)$ be the set of stable-equivalence classes of sheaves
of the form $\Coker(\f)$, $\f \in W_2$.

\begin{prop}
\label{3.4}
There exists a geometric quotient $W_2/G_2$, which is isomorphic to $X_2$.
In particular, $X_2$ is irreducible and has codimension $3$.
\end{prop}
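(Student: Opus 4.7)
The plan is to follow the blueprint of Proposition \ref{2.2}, modified to yield a \emph{geometric} rather than merely categorical quotient, and then to extract the codimension from a direct dimension count.

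First, I would construct the canonical morphism $\r \colon W_2 \to X_2$ sending $\f$ to the stable-equivalence class of $\Coker(\f)$. By Proposition \ref{2.3}, every properly semi-stable sheaf in $\M(6,2)$ lies in $X_0$, so every sheaf parametrised by $W_2$ is genuinely stable. Consequently stable-equivalence reduces to isomorphism on the image, orbits of $G_2$ in $W_2$ are closed, and $\Coker(\f_1) \isom \Coker(\f_2)$ precisely when $\f_1$ and $\f_2$ are $G_2$-conjugate (any isomorphism of cokernels lifts uniquely, modulo the automorphism groups of source and target, because the resolution of \ref{3.3} is the minimal free resolution). This closure of orbits is what upgrades the categorical quotient of \ref{2.2}-style to a geometric one.

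Second, to establish the universal property I would apply 3.1.6 \cite{drezet-maican}. As in the proof of \ref{2.2}, any $G_2$-invariant morphism $W_2 \to Y$ factors through a set-theoretic map $X_2 \to Y$; its algebraicity requires that resolution \ref{3.3} be recovered from $\F$ in a natural manner via the Beilinson spectral sequence. This is essentially done in the first half of the proof of \ref{3.3}: the conditions $\h^0(\F(-1))=0$, $\h^1(\F)=1$, $\h^0(\F \tensor \Om^1(1))=1$ determine the Beilinson tableau and, after cancelling redundant summands, produce exactly the three-term form $\O(-3) \oplus \O(-2) \oplus \O(-1) \lra 3\O$.

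Third, I compute dimensions. The affine space $\W_2 = \Hom(\O(-3) \oplus \O(-2) \oplus \O(-1), 3\O)$ has dimension $3(10+6+3)=57$, and $W_2$ is a nonempty constructible $G_2$-invariant subset of $\W_2$ containing an open dense part, hence irreducible. The group $G_2$ has dimension $15 + 9 - 1 = 23$: the $9$ from $\Aut(3\O) = \GL_3$, the $15$ as $1+3+6+1+3+1$ from the diagonal and lower off-diagonal Hom spaces in the parabolic $\Aut(\O(-3) \oplus \O(-2) \oplus \O(-1))$, minus $1$ for the quotient by scalars. Stable sheaves on $\P^2$ being simple, the generic $G_2$-stabiliser is trivial, so $\dim X_2 = 57 - 23 = 34$; comparing with $\dim \M(6,2) = 37$ gives codimension $3$. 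Irreducibility of $X_2$ descends from that of $W_2$ through $\r$.

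The main obstacle is verifying the hypothesis of 3.1.6 \cite{drezet-maican} in a completely functorial way: one must show that the minimal Beilinson resolution of a sheaf satisfying the cohomological conditions attached to $X_2$ has no extra summands beyond those listed in \ref{3.3}, and that all cancellations used are canonical. Everything else — surjectivity, bijectivity on orbits, irreducibility and the dimension count — is essentially formal once this naturality statement is in hand.
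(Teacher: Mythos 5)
Your overall route coincides with the paper's: the canonical morphism $W_2 \to X_2$, the recovery of the resolution of \ref{3.3} from the Beilinson spectral sequence in the manner of 3.1.6 \cite{drezet-maican} (the paper carries this out explicitly, writing down the Beilinson tableau for $\F$, putting $\f_1$ and $\f_2$ into Koszul-type canonical form, computing $\Ker(\f_1) \isom \O(-3) \oplus \O(-2)$, and lifting $\f_5$ to a morphism into $3\O$), and you correctly single out this naturality as the main point. Your dimension count ($57-23=34$ against $\dim \M(6,2)=37$) is correct and supplies the codimension claim, which the paper leaves implicit.

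The genuine gap is the passage from categorical to geometric quotient. You argue that, since every sheaf in $X_2$ is stable by \ref{2.3}, the fibres of $\rho$ are single orbits, and that ``this closure of orbits is what upgrades the categorical quotient \dots to a geometric one.'' That implication does not hold: an invariant morphism whose fibres are exactly the orbits can still fail to be a geometric quotient if the target carries the wrong structure --- the standard obstruction being a bijective morphism onto a non-normal variety (e.g.\ the normalisation of a cuspidal cubic), which is not an isomorphism and hence cannot be a quotient map. What is needed, and what the paper supplies, is the normality of $X_2$ (via \cite{mumford}, remark (2), p.~5) followed by \cite{popov-vinberg}, Theorem 4.2: in characteristic zero a surjective invariant morphism onto a normal variety whose fibres are the orbits is a geometric quotient. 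Note also that $G_2$ is far from reductive (the automorphism group of $\O(-3)\oplus\O(-2)\oplus\O(-1)$ has a large unipotent radical), so no GIT-style argument based on closedness of orbits is available here; the normality-plus-bijectivity route is the one that actually works. Your remaining steps --- triviality of generic stabilisers via simplicity of stable sheaves, and irreducibility of $X_2$ descending from that of $W_2$ --- are fine.
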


\begin{proof}
Diagram (2.2.3) \cite{drezet-maican} for a sheaf $\F$ giving a point in $X_2$ takes the form
\[
\xymatrix
{
4\O(-2) \ar[r]^-{\f_1} & 3\O(-1) \ar[r]^-{\f_2} & \O \\
0 & \O(-1) \ar[r]^-{\f_4} & 3\O
}.
\]
As in the proof of 2.2.4 \cite{mult_five}, we may assume that $\f_1$ and $\f_2$ are given by
\[
\f_1 = \left[
\ba{cccc}
-Y & -Z & \phantom{-} 0 & 0 \\
\phantom{-} X & \phantom{-} 0 & -Z & 0 \\
\phantom{-} 0 & \phantom{-} X & \phantom{-} Y & 0
\ea
\right], \qquad \qquad \f_2 = \left[
\ba{ccc}
X & Y & Z
\ea
\right].
\]
Thus $\Ker(\f_1) \isom \O(-3) \oplus \O(-2)$ and $\Im(\f_1) = \Ker(\f_2)$.
The exact sequence (2.2.5) \cite{drezet-maican} takes the form
\[
0 \lra \O(-3) \oplus \O(-2) \stackrel{\f_5}{\lra} \Coker(\f_4) \lra \F \lra 0.
\]
By 2.2 \cite{drezet-maican}, $\f_4$ is injective.
Clearly $\f_5$ lifts to a morphism $\f_5' \colon \O(-3) \oplus \O(-2) \to 3\O$.
We obtain the resolution
\[
0 \lra \O(-3) \oplus \O(-2) \oplus \O(-1) \stackrel{\f}{\lra} 3\O \lra \F \lra 0,
\]
\[
\f = \left[
\ba{cc}
\f_5' & \f_4
\ea
\right].
\]
This proves that the map $W_2 \to X_2$ is a categorical quotient.
According to \cite{mumford}, remark (2), p. 5, $X_2$ is normal.
Applying \cite{popov-vinberg}, theorem 4.2,
we conclude that the map $W_2 \to X_2$ is a geometric quotient.
\end{proof}


\section{The codimension $5$ stratum}

\begin{prop}
\label{4.1}
The sheaves $\F$ giving points in $\M(6,2)$ and satisfying the cohomological conditions
\[
\h^0(\F(-1))=0, \qquad \h^1(\F)=1, \qquad \h^0(\F \tensor \Om^1(1))=2
\]
are precisely the sheaves with resolution of the form
\[
0 \lra \O(-3) \oplus \O(-2) \oplus 2\O(-1) \stackrel{\f}{\lra} \O(-1) \oplus 3\O \lra \F \lra 0,
\]
where $\f_{12} \neq 0$, $\f_{13}=0$, $\f_{11}$ is not divisible by $\f_{12}$ and
$\f_{23}$ has linearly independent maximal minors.
\end{prop}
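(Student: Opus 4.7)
The plan is to mirror the proof of Proposition \ref{3.3} with $m=2$. The Beilinson monad argument used there produces, for every $\F$ in $\M(6,2)$ satisfying $\h^0(\F(-1))=0$, $\h^1(\F)=1$, and $\h^0(\F \tensor \Om^1(1))=m$, a resolution
\[
0 \lra \O(-3) \oplus \O(-2) \oplus m\O(-1) \stackrel{\f}{\lra} (m-1)\O(-1) \oplus 3\O \lra \F \lra 0
\]
in which $\f_{13}=0$. Specialising to $m=2$ yields immediately both the stated shape of the resolution and the vanishing $\f_{13}=0$.

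I would then show that the three remaining conditions on $\f$ are forced by semi-stability of $\F$. If $\f_{12}=0$, then together with $\f_{13}=0$ the top row of $\f$ is $(\f_{11},0,0)$, and injectivity of $\f$ forces $\f_{11}$ to be a non-zero conic; a snake-lemma argument in the spirit of Proposition \ref{3.2} embeds the structure sheaf $\O_{C_2}$ of the conic $\{\f_{11}=0\}$, of slope $1/2$, as a subsheaf of $\F$, destabilising. If $\f_{12}$ divides $\f_{11}$, then the column operation on the source given by $\ell=\f_{11}/\f_{12} \colon \O(-3) \to \O(-2)$ reduces to the case $\f_{11}=0$, and the analogous snake-lemma argument now produces a destabilising $\O_L$ for a line $L$. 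Finally, if the three $2\times 2$ minors of $\f_{23}$ are linearly dependent, a change of basis on $3\O$ creates a zero row in $\f_{23}$; the corresponding summand of $3\O$ then receives contributions only from $\O(-3)\oplus\O(-2)$, and the cokernel of that partial subcomplex embeds in $\F$ as a $1$-dimensional subsheaf of slope at least $1/2$, again destabilising.

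Conversely, I would assume $\f$ satisfies all four conditions and suppose that a destabilising semi-stable subsheaf $\E \subset \F$ exists, in order to derive a contradiction by the diagrammatic technique of Proposition \ref{3.3}. Proposition \ref{2.3} rules out $\PP_\E(t)=6t+2$, while the inclusion $\E \hookrightarrow \F$ together with the fact that $\F$ is generated by sections of $\O(-1)$ and $\O$ bounds $\h^0(\E(-1))=0$ and $\h^0(\E \tensor \Om^1(1)) \le 2$. Listing the candidate resolutions of $\E$ from \cite{drezet-maican}, \cite{mult_five}, and \cite{mult_six_one}, I would fit each one into a commutative ladder above our resolution of $\F$; rank-counting the induced vertical maps $\a$ and $\b$, with $\a$ injective on global sections, shows that $\f$ must then be equivalent to a matrix with $\f_{12}=0$, with $\f_{12}$ dividing $\f_{11}$, or with linearly dependent minors of $\f_{23}$, contradicting the hypothesis.

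The main obstacle is the converse direction: the list of candidate resolutions for $\E$ is longer than in Proposition \ref{3.3} because $\h^0(\E \tensor \Om^1(1))$ is allowed to be as large as $2$, so several additional resolution types must be discarded by diagram chase. No new technique beyond those already deployed in Propositions \ref{2.2}, \ref{3.1}, and \ref{3.3} should be required, but the bookkeeping is markedly heavier.
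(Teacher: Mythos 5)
There are genuine gaps, concentrated in exactly the places where the proposal departs from the paper. First, your justifications for the necessity of the conditions on $\f$ are incorrect as stated. With $\f_{12}=\f_{13}=0$ the snake lemma applied to the subcomplex $\O(-3)\xrightarrow{\f_{11}}\O(-1)$ exhibits $\Coker(\f_{11})\isom\O_{C_2}(-1)$ as a subsheaf of $\F$; this has Hilbert polynomial $2t-1$ and slope $-1/2$, not $1/2$, so it does not destabilise. What destabilises is the complementary subcomplex: $\Coker(\f_{22},\f_{23})$, of Hilbert polynomial $4t+3$ and slope $3/4$ (equivalently, the quotient $\O_{C_2}(-1)$ has too small a slope to be a quotient). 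The same sign error occurs in your treatment of the case where $\f_{12}$ divides $\f_{11}$. More seriously, your mechanism for the minor condition is false: linear dependence of the three maximal minors of $\f_{23}$ does \emph{not} imply that a constant change of basis on $3\O$ produces a zero row --- the matrix with rows $(X,0)$, $(0,X)$, $(0,Y)$ has minors spanning only $\langle X^2, XY\rangle$ yet admits no zero row. The correct statement (used implicitly in the paper via 5.2 of \cite{mult_six_one} and 2.3.4 of \cite{mult_five}) is that dependent minors force either a common factor or a degenerate Kronecker module, and the destabilising subsheaf must be extracted accordingly.

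Second, and more fundamentally, your converse direction replaces the paper's key reduction with an open-ended enumeration. The paper applies the snake lemma to the resolution itself to obtain
\[
0 \lra \F' \lra \F \lra \O_Z \lra 0,
\]
where $Z$ has length $2$ and $\F'$ has resolution $0 \to \O(-4)\oplus 2\O(-1)\xrightarrow{\psi} 3\O \to \F'\to 0$ with $\psi_{12}=\f_{23}$; by 5.2 of \cite{mult_six_one}, $\F'$ lies in $\M(6,0)$ and its only slope-zero subsheaf is of the form $\O_L(-1)$. This immediately confines any destabilising $\E$ to $\PP_\E(t)\in\{2t+1,\,t+2,\,t+1\}$, and each of these three cases is dispatched by a short ladder argument. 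Your plan instead enumerates all semi-stable $\E$ with $\chi(\E)/r(\E)>1/3$, $r\le 5$, which includes candidates in $\M(4,2)$, $\M(5,2)$, $\M(3,2)$, etc.; for each you would have to show that the existence of a commutative ladder over the resolution $\O(-3)\oplus\O(-2)\oplus 2\O(-1)\to\O(-1)\oplus 3\O$ contradicts one of the four conditions on $\f$. That translation is precisely the hard content (and is where your necessity arguments already went wrong), and you have not carried it out for any of the higher-multiplicity cases. Without either performing that case analysis in full or importing the $\M(6,0)$ reduction, the converse direction remains unproved.
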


\begin{proof}
At 3.3 we proved that a sheaf $\F$ giving a point in $\M(6,2)$ and satisfying the above
cohomological conditions has a resolution as in the proposition.
The conditions imposed on $\f$ follow from the semi-stability of $\F$.

Conversely, assume that $\F$ has a resolution as in the proposition.
Assume that there is a destabilising subsheaf $\E \subset \F$.
We may assume that $\E$ is semi-stable.
From the snake lemma we obtain an extension
\[
0 \lra \F' \lra \F \lra \O_Z \lra 0,
\]
where $Z$ is the zero-dimensional scheme of length $2$
given by the ideal $(\f_{11}, \f_{12})$ and $\F'$ has a resolution
\[
0 \lra \O(-4) \oplus 2\O(-1) \stackrel{\psi}{\lra} 3\O \lra \F' \lra 0
\]
in which $\psi_{12}= \f_{23}$. According to 5.2 \cite{mult_six_one}, $\F'$ gives a point in
$\M(6,0)$ and the only subsheaf of $\F'$ of slope zero, if there is one, must be of the form
$\O_L(-1)$ for a certain line $L \subset \P^2$.
It follows that $\E$ must have Hilbert polynomial $\PP_{\E}(t) = 2t+1$, $t+2$ or $t+1$.
If $\PP_{\E}(t)=2t+1$, then $\E$ is the structure sheaf of some conic curve $C \subset \P^2$.
We obtain a commutative diagram with exact rows and injective vertical maps
\[
\xymatrix
{
0 \ar[r] & \O(-2) \ar[r] \ar[d]^-{\b} & \O \ar[r] \ar[d]^-{\a} & \O_C \ar[r] \ar[d] & 0 \\
0 \ar[r] & \O(-3) \oplus \O(-2) \oplus 2\O(-1) \ar[r] & \O(-1) \oplus 3\O \ar[r] & \F \ar[r] & 0
}.
\]
Taking into account the possible canonical forms for $\b$, we see that $\f$ is represented by
a matrix having one of the following forms:
\[
\left[
\ba{cccc}
\star & 0 & 0 & 0 \\
\star & 0 & \star & \star \\
\star & 0 & \star & \star \\
\star & \star & \star & \star
\ea
\right], \qquad \quad \left[
\ba{cccc}
\star & \star & 0 & 0 \\
\star & \star & \star & 0 \\
\star & \star & \star & 0 \\
\star & \star & \star & \star
\ea
\right], \qquad \quad \left[
\ba{cccc}
\star & \star & 0 & 0 \\
\star & \star & 0 & 0 \\
\star & \star & \star & \star \\
\star & \star & \star & \star
\ea
\right].
\]
In each of these situations the hypothesis on $\f$ gets contradicted.
If $\PP_{\E}(t)=t+1$, then $\E$ is the structure sheaf of some line $L \subset \P^2$
and we obtain a contradiction as above.
The case in which $\PP_{\E}(t)= t+2$ is not feasible because in this case $\E \isom \O_L(1)$,
yet $\H^0(\E(-1))$ must vanish because the corresponding group for $\F$ vanishes.
\end{proof}

\noi \\
Let $\W_3 = \Hom(\O(-3) \oplus \O(-2) \oplus 2\O(-1), \O(-1) \oplus 3\O)$ and let $W_3 \subset \W_3$
be the set of morphisms $\f$ from proposition \ref{4.1}. Let
\[
G_3 = (\Aut(\O(-3) \oplus \O(-2) \oplus 2\O(-1)) \times \Aut(\O(-1) \oplus 3\O))/\C^*
\]
be the natural group acting by conjugation on $\W_3$. Let $X_3 \subset \M(6,2)$
be the set of stable-equivalence classes of sheaves of the form $\Coker(\f)$, $\f \in W_3$.

\begin{prop}
\label{4.2}
The generic sheaves in $X_3$ have the form $\O_C(2)(-P_1-P_2-P_3+P_4+P_5)$,
where $C \subset \P^2$ is a smooth sextic curve, $P_i$ are five distinct points on $C$
and $P_1, P_2, P_3$ are non-colinear.
In particular, $X_3$ lies in the closure of $X_1$.
Moreover, $X_3$ also lies in the closure of $X_2$.
\end{prop}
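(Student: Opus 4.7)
The plan is to first pin down the generic sheaves in $X_3$ by reusing the structural extension from the proof of \ref{4.1}, and then deduce the two closure statements separately.

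The proof of \ref{4.1} supplies a short exact sequence $0 \lra \F' \lra \F \lra \O_Z \lra 0$, where $Z$ is a length-$2$ subscheme and $\F' \in \M(6,0)$ has a resolution $0 \lra \O(-4) \oplus 2\O(-1) \lra 3\O \lra \F' \lra 0$ whose middle block equals $\f_{23}$. Since $\f_{23}$ has linearly independent maximal minors, I would invoke 5.2 \cite{mult_six_one} to conclude that, generically, $\F' \isom \O_C(2)(-P_1-P_2-P_3)$ for a smooth sextic curve $C \subset \P^2$ containing three non-colinear points. For a generic $\f \in W_3$ the scheme $Z$ reduces to two distinct points $P_4, P_5$ on $C \setminus \{P_1,P_2,P_3\}$, and the resulting non-split extension is identified with the line bundle $\O_C(2)(-P_1-P_2-P_3+P_4+P_5)$ on $C$.

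For the inclusion $X_3 \subset \overline{X_2}$ the approach is an explicit deformation inside $\W_3$: starting from a generic $\f_0 \in W_3$ (so $(\f_0)_{13}=0$ by \ref{4.1}), set $(\f_t)_{13} = t \cdot e$ for a fixed nonzero $e \in \Hom(2\O(-1), \O(-1))$ and leave the remaining blocks unchanged. For $t \neq 0$, the subgroup $\Aut(2\O(-1)) \subset G_3$ normalises $(\f_t)_{13}$ to $[1,0]$; cancelling the matched $\O(-1)$ summand from source and target yields a resolution of the type appearing in \ref{3.3}. A direct check shows that for $\f_0$ sufficiently generic the reduced matrix avoids the three forbidden equivalence classes of \ref{3.3}, so $\F_t = \Coker(\f_t)$ gives a point in $X_2$ for small $t \neq 0$, placing $\F_0 \in X_3$ in $\overline{X_2}$.

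For the inclusion $X_3 \subset \overline{X_1}$, by \ref{3.2} and duality the generic sheaves of $X_1$ are likewise line bundles on smooth sextics, of the form $\O_C(1)(P_1+\cdots+P_6-P_7)$. Both generic loci therefore lie in the closed locus $W \subset \M(6,2)$ where $\h^0(\F \tensor \Om^1(1)) \geq 1$, with $X_1$ open in $W$ and $X_3$ cut out inside $W$ by the further conditions $\h^0(\F \tensor \Om^1(1)) \geq 2$ and $\h^1(\F) \geq 1$. The plan is to construct, for a generic $\F = \O_C(2)(-P_1-P_2-P_3+P_4+P_5) \in X_3$, a one-parameter deformation inside $W$ whose generic member satisfies $\h^1(\F)=0$ and $\h^0(\F \tensor \Om^1(1))=1$, hence lies in $X_1$. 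The main obstacle is verifying that such a deformation direction exists, i.e., that $X_3$ lies in an irreducible component of $W$ whose generic point falls in $X_1$; an alternative route would deform at the level of resolutions, but since the resolutions for $X_1$ and $X_3$ have different discrete invariants, this would require an intermediate presentation with extra summands that cancel on the generic fibre.
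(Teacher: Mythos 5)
Your identification of the generic sheaves and your argument for $X_3 \subset \overline{X}_2$ both match the paper. The paper likewise reads off the extension $0 \to \F' \to \F \to \O_Z \to 0$ from the proof of \ref{4.1}, applies 5.2 of \cite{mult_six_one} on the open locus where $C$ is smooth, $Z$ splits into two distinct points and the minors of $\f_{23}$ cut out three distinct non-colinear points, and obtains $X_3 \subset \overline{X}_2$ by adding $t\pi$ to $\f$, where $\pi$ is exactly your rank-one perturbation of the block $\f_{13}$. (The paper additionally proves the converse, that every $\O_C(2)(-P_1-P_2-P_3+P_4+P_5)$ arises from some $\f \in W_3$, via a Serre-duality computation and the horseshoe lemma; you do not strictly need that converse for the closure statements, but you do need the locus you describe to be dense in $X_3$, which rests on the irreducibility established in \ref{4.3}.)

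The genuine gap is the inclusion $X_3 \subset \overline{X}_1$. You correctly observe that the two resolutions have different discrete invariants, so no naive matrix perturbation works, and you then reduce the problem to ``verifying that such a deformation direction exists'' --- but that is precisely the statement to be proved, so nothing has been established; openness of $X_1$ in the locus $\h^0(\F\tensor\Om^1(1))\ge 1$ gives nothing unless you already know that locus is irreducible with generic point in $X_1$. The missing idea is to deform on the curve rather than on the resolution. Since the generic sheaf is the line bundle $\O_C(2)(-P_1-P_2-P_3+P_4+P_5)$ on a smooth sextic $C$, and $\O_C(1)\isom \O_C(2)(-L\cdot C)$ for any line $L$, take $L$ to be the line through $P_1$ and $P_2$, meeting $C$ residually in four further points $Q_1,Q_2,Q_3,Q_4$, and rewrite the sheaf as $\O_C(1)(Q_1+Q_2+Q_3+Q_4-P_3+P_4+P_5)$. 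Moving the $Q_i$ to nearby points $R_i$ on $C$ so that $R_1,R_2,R_3,R_4,P_4,P_5$ do not lie on a conic produces, by \ref{3.2}, sheaves $\O_C(1)(R_1+\cdots+R_4-P_3+P_4+P_5)$ representing points of $X_1$ that converge to the chosen point of $X_3$. The non-conic condition is exactly the genericity that \ref{3.2} requires for membership in $X_1$; without this explicit one-parameter family your sketch cannot be closed.
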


\begin{proof}
Let $X_{30} \subset X_3$ be the open subset given by the following conditions:
the equation $\det(\f)=0$ determines a smooth sextic curve $C \subset \P^2$,
the scheme $Z$ from \ref{4.1} consists of two distinct points $P_4, P_5$,
the maximal minors of $\f_{23}$ have no common factor and the subscheme $Y \subset \P^2$
they determine consists of three distinct points $P_1, P_2, P_3$, which are also distinct from
$P_4$ and $P_5$.
Let $\F$ give a point in $X_{30}$.
According to 5.2 \cite{mult_six_one},
the sheaf $\F'$ from \ref{4.1} is isomorphic to $\O_C(2)(-P_1-P_2-P_3)$,
hence $\F$ is isomorphic to $\O_C(-P_1-P_2-P_3+P_4+P_5)$.
Conversely, we must show that any such sheaf $\F$ gives a point in $X_{30}$.
We claim that $\F(1)$ has a global section that does not vanish at $P_4$ or $P_5$.
The argument can be found at 2.3.2 \cite{mult_five} and it will be reproduced here
for the sake of completeness. Let $\e_i \colon \H^0(\O_Z) \to \C$ be the linear form of evaluation
at $P_i$, $i=4, 5$. Let $\d \colon \H^0(\O_Z) \to \H^1(\O_C(3)(-Y))$ be the connecting homomorphism
arising from the exact sequence
\[
0 \lra \O_C(3)(-Y) \lra \F(1) \lra \O_Z \lra 0.
\]
We must show that each $\e_i$ is not orthogonal to $\operatorname{Ker}(\d)$ or, which is the same,
that each $\e_i$ is not in the image of the dual map $\d^*$.
By Serre duality $\d^*$ is the restriction morphism
\[
\H^0(\O_C(Y)) = \H^0(\O_C(-3)(Y) \tensor \omega_C) \lra \H^0((\O_C(-3)(Y) \tensor \omega_C)|_{Z})
= \H^0(\O_C(Y)|_{Z}).
\]
We have the identity $\H^0(\O_C(Y)) = \H^0(\O_C)=\C$.
This follows from the fact that the connecting homomorphism associated to the exact sequence
\[
0 \lra \O_C \lra \O_C(Y) \lra \O_Y \lra 0
\]
is injective. By Serre duality, this is equivalent to saying that the restriction morphism
\[
\H^0(\O_C(3)) = \H^0(\O_C \tensor \omega_C) \lra \H^0((\O_C \tensor \omega_C)|_{Y}) =
\H^0(\O_C(3)|_{Y})
\]
is surjective, and this is obvious. The claim now easily follows.
We may now apply the horseshoe lemma to the extension
\[
0 \lra \O_C(2)(-Y) \lra \F \lra \O_Z \lra 0
\]
and to the resolutions
\[
0 \lra \O(-4) \lra \O(-3) \oplus \O(-2) \lra \O(-1) \lra \O_Z \lra 0,
\]
\[
0 \lra \O(-4) \lra \I_Z(2) \lra \O_C(2)(-Y) \lra 0.
\]
Here $\I_Y \subset \O_{\P^2}$ is the ideal sheaf of $Y$.
We obtain the resolution
\[
0 \lra \O(-4) \lra \O(-4) \oplus \O(-3) \oplus \O(-2) \lra \O(-1) \oplus \I_Y(2) \lra \F \lra 0.
\]
As in the proof of 2.3.2 \cite{mult_five}, we can show that the morphism $\O(-4) \to \O(-4)$
above is non-zero. The argument uses the fact that $\Ext^1(\O_Z, \I_Y(2))=0$.
The vanishing of this group follows from the vanishing of $\Hom(\O_Z,\O_Y)$
and of $\Ext^1(\O_Z, \O(2))$, in view of the long Ext-sequence associated to the exact sequence
\[
0 \lra \I_Y(2) \lra \O(2) \lra \O_Y \lra 0.
\]
Canceling $\O(-4)$ and taking into account that $\I_Y(2) \isom \Coker(\psi)$ for some morphism
$\psi \colon 2\O(-1) \to 3\O$ that is represented by a matrix with linearly independent maximal
minors generating the ideal of $Y$ (cf. the proof of 2.3.4(i) \cite{mult_five}), we obtain a resolution
\[
0 \lra \O(-3) \oplus \O(-2) \oplus 2\O(-1) \stackrel{\f}{\lra} \O(-1) \oplus 3\O \lra \F \lra 0,
\]
in which $\f_{13} = 0$, $\f_{23}=\psi$ and $\f_{11}, \f_{12}$ generate the ideal of $Z$.
It is clear now that $\F$ gives a point in $X_{30}$.

To show that $X_3$ is included in $\overline{X}_1$ we choose a point in $X_3$ represented by the sheaf
\[
\O_C(2)(-P_1-P_2-P_3+P_4+P_5).
\]
We may assume that the line through $P_1$ and $P_2$ intersects $C$ at six distinct points
$P_1, P_2, Q_1, Q_2, Q_3, Q_4$, which are also distinct from $P_4$ and $P_5$.
Then
\[
\O_C(2)(-P_1-P_2-P_3+P_4+P_5) \isom \O_C(1)(Q_1+Q_2+Q_3+Q_4-P_3+P_4+P_5).
\]
Clearly, we can find points $R_i$ on $C$ converging to $Q_i$, $1 \le i \le 4$,
which are distinct from $P_3$ and such that $R_1, R_2, R_3, R_4, P_4, P_5$
do not lie on a conic curve. According to \ref{3.2}, the sheaves
\[
\O_C(1)(R_1+R_2+R_3+R_4-P_3+P_4+P_5)
\]
represent points in $X_1$. These points converge to the chosen point in $X_3$.
Thus $X_3 \subset \overline{X}_1$.

Taking into account the description, found at 3.3, of sheaves giving points in $X_2$,
it is clear that for generic $\f \in W_3$ and for $t \in \C^*$ in a neighbourhood of zero
the morphism $\f + t\pi$ is injective and its cokernel gives a point in $X_2$.
Here $\pi$ is projection onto the last component followed by injection into the first component.
Clearly $[\Coker(\f + t\pi)]$ converges to $[\Coker(\f)]$ as $t$ tends to $0$. Thus $X_3 \subset \overline{X}_2$.
\end{proof}

\begin{prop}
\label{4.3}
There exists a geometric quotient $W_3/G_3$ and it is a proper open subset inside a fibre bundle
with fibre $\P^{22}$ and base $Y \times \N(3,2,3)$, where $Y$ is the Hilbert scheme
of zero-dimensional subschemes of $\P^2$ of length $2$.
Moreover, $W_3/G_3$ is isomorphic to $X_3$.
\end{prop}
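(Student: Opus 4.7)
The plan is to imitate the argument of Proposition \ref{3.1}, adapted to the block structure of $\f \in W_3$. To each $\f$ I associate the ideal $(\f_{11}, \f_{12})$, which cuts out a length-$2$ subscheme $Z \subset \P^2$ giving a point in $Y$, together with the stable-equivalence class of the Kronecker module $\f_{23}$, giving a point in $\N(3,2,3)$. This produces a $G_3$-invariant morphism $\pi \colon W_3 \to Y \times \N(3,2,3)$, whose fibres parametrise the residual data $(\f_{21}, \f_{22})$ modulo the unipotent part of $G_3$.

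I then carry out the relative quotient construction of 2.2.2 in \cite{mult_five}. Let $W_3' \subset \W_3$ be the locally closed subset defined by $\f_{12} \neq 0$, $\f_{13}=0$, $\f_{11}$ not divisible by $\f_{12}$, and $\f_{23}$ semi-stable with linearly independent maximal minors. Let $\Sigma \subset W_3'$ be the $G_3$-invariant closed subset on which $(\f_{21}, \f_{22})$ is absorbed by the shears:
\[
\f_{21} = \f_{22} s_{21} + \f_{23} s_{31} + v \f_{11}, \qquad \f_{22} = \f_{23} s_{32} + v \f_{12}
\]
for some $s_{21} \in \Hom(\O(-3), \O(-2))$, $s_{31} \in \Hom(\O(-3), 2\O(-1))$, $s_{32} \in \Hom(\O(-2), 2\O(-1))$, $v \in \Hom(\O(-1), 3\O)$. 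As at loc.cit., one constructs a rank-$23$ vector bundle $Q$ over $Y \times \N(3,2,3)$ such that $\P(Q)$ is a geometric quotient of $W_3' \setminus \Sigma$ modulo $G_3$; the set $W_3/G_3$ is then a proper open subset of $\P(Q)$. The fibre dimension $22$ comes from the naive count of $(\f_{21}, \f_{22})$ modulo shears after accounting for the two-parameter family $(v, s_{31}, s_{32}) = (\f_{23} t, t \f_{11}, t \f_{12})$, $t \in \Hom(\O(-1), 2\O(-1))$, which acts trivially on $(\f_{21}, \f_{22})$.

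The isomorphism $W_3/G_3 \cong X_3$ follows from the Beilinson spectral sequence argument used in the proof of \ref{4.1}: for $\F$ giving a point in $X_3$, the tableau (2.2.3) of \cite{drezet-maican} canonically produces resolution \ref{4.1}, so the map $\rho \colon W_3 \to X_3$ descends to a bijective quotient map. The main technical obstacle is the construction of the vector bundle $Q$: one must verify that the shear action's quotient has constant rank over the base, accounting correctly for the trivially acting shears parametrised by $t$, and that the resulting spaces glue into a locally trivial bundle of rank $23$ over $Y \times \N(3,2,3)$.
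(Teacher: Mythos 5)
Your first half (the fibre bundle over $Y \times \N(3,2,3)$) is essentially the paper's: it likewise takes $W_3'$ to be the locus cut out by the conditions of Proposition \ref{4.1} minus injectivity, removes the $G_3$-invariant subset $\Sigma$ where $(\f_{21},\f_{22})=\f_{23}u+v\,(\f_{11},\f_{12})$, and quotes the construction of 3.2.3 of the multiplicity-five paper to obtain a rank-$23$ bundle $Q$ with $W_3/G_3$ open in $\P(Q)$; your dimension count and the two-parameter family of trivially acting shears match. (Your $\Sigma$ additionally allows the shear $s_{21}\colon\O(-3)\to\O(-2)$, which moves the base datum $\f_{11}$ rather than only the residual block; the paper uses only $u$ and $v$. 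This needs reconciling with the cited construction, but it is not the main issue.)

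The gap is in the second half. You assert that the Beilinson tableau ``canonically produces resolution \ref{4.1}'' and conclude with a bijective quotient map; but the resolution in Proposition \ref{4.1} was obtained from the Beilinson \emph{free monad} (via \ref{3.3}), not from the tableau, and the point of the method of 3.1.6 of Dr\'ezet--Maican is precisely that one must re-derive the resolution \emph{naturally} from the spectral sequence in order to upgrade the bijection to an isomorphism of varieties. Concretely, the tableau only yields the four-term sequence $0 \to \O(-4) \to \Coker(\f_4) \to \F \to \O_Z \to 0$ with a length-$2$ cokernel $\O_Z$; to splice in the resolution of $\O_Z$ by the horseshoe lemma one must lift the surjection $\O(-1) \to \O_Z$ to a map $\O(-1)\to\F$, i.e.\ show that $\F(1)$ has a global section whose image generates $\O_Z$ as an $\O_{\P^2}$-module. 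This is the technical heart of the paper's proof: a section mapping to a nonzero section $s$ of $\O_Z$ exists because $\h^0(\F(1))=8>7=\h^0(\Coker(\f_5)(1))$, and the possibility that $s$ generates only a length-one subsheaf $\C_{z_1}\subset\O_Z$ is excluded by resolving the preimage $\F'$ of $\C_{z_1}$ and reaching the contradiction $\h^0(\F')=4>\h^0(\F)$. Your proposal omits this step entirely and instead locates the main difficulty in the construction of $Q$, which the paper dispatches by citation. Only after this lifting, the non-vanishing of the induced map $\O(-4)\to\O(-4)$ (forced by $\h^1(\F)=1$), and cancellation does one reach resolution \ref{4.1} functorially and conclude that $W_3/G_3\to X_3$ is an isomorphism.
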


\begin{proof}
The construction of $W_3/G_3$ is identical to the construction of the quotient at 3.2.3 \cite{mult_five}.
Let $W_3' \subset \W_3$ be the locally closed subset given by the conditions of \ref{4.1},
except injectivity. Let $\Sigma \subset W_3'$ be the $G_3$-invariant subset given by the condition
\[
\f_{21} = \f_{22} u + v \f_{11}, \quad u \in \Hom(\O(-3) \oplus \O(-2), 2\O(-1)), \quad v \in \Hom(\O(-1),3\O).
\]
As at loc.cit., we can construct a vector bundle $F$ over $Y \times \N(3,2,3)$ of rank $23$
such that $\P(F)$ is a geometric quotient of $W_3' \setminus \Sigma$ modulo $G_3$.
Then $W_3/G_3$ is a proper open subset of $\P(F)$.

Let $\F$ give a point in $X_3$. The Beilinson tableau (2.2.3) \cite{drezet-maican} for $\F$ has the form
\[
\xymatrix
{
4\O(-2) \ar[r]^-{\f_1} & 4\O(-1) \ar[r]^-{\f_2} & \O \\
0 & 2\O(-1) \ar[r]^-{\f_4} & 3\O
}.
\]
As at 6.5 \cite{mult_six_one}, we have $\Ker(\f_1) \isom \O(-4)$ and $\Ker(\f_2)/\Im(\f_1) \isom \O_Z$
for a scheme $Z \subset \P^2$ of dimension zero and length $2$.
The exact sequence (2.2.5) \cite{drezet-maican} takes the form
\[
0 \lra \O(-4) \stackrel{\f_5}{\lra} \Coker(\f_4) \lra \F \lra \O_Z \lra 0.
\]
We claim that $\F(1)$ has a global section which maps to a global section of $\O_Z$ that generates
this sheaf as an $\O_{\P^2}$-module. We have $\h^0(\Coker(\f_5)(1))=7$, $\h^0(\F(1))=8$, hence
$\F(1)$ has a global section mapping to a non-zero section $s$ of $\O_Z$.
Consider an extension
\[
0 \lra \C_{z_1} \lra \O_Z \lra \C_{z_2} \lra 0,
\]
where $z_1, z_2$ are not necessarily distinct points in $\P^2$.
If $s$ maps to zero in $\C_{z_2}$, then $s$ generates $\C_{z_1}$.
Let $\F'$ be the preimage of $\C_{z_1}$ in $\F$.
We apply the horseshoe lemma to the extension
\[
0 \lra \Coker(\f_5) \lra \F' \lra \C_{z_1} \lra 0
\]
and to the resolutions
\[
0 \lra \O(-4) \oplus 2\O(-1) \lra 3\O \lra \Coker(\f_5) \lra 0,
\]
\[
0 \lra \O(-3) \lra 2\O(-2) \lra \O(-1) \lra \C_{z_1} \lra 0.
\]
We obtain the resolution
\[
0 \lra \O(-3) \lra \O(-4) \oplus 2\O(-2) \oplus 2\O(-1) \lra \O(-1) \oplus 3\O \lra \F' \lra 0
\]
from which we get the relation $\h^0(\F')=4$. This is absurd, $\h^0(\F')$ cannot exceed $\h^0(\F)$.
Thus the image of $s$ in $\C_{z_2}$ is non-zero. When $z_1 = z_2$ this is enough to conclude that 
$s$ generates $\O_{Z}$. When $z_1 \neq z_2$ we revert the roles of $z_1$ and $z_2$ in the above
argument to deduce that $s$ also does not vanish at $z_1$, so $s$ generates $\O_Z$.
We can now apply the horseshoe lemma to the extension
\[
0 \lra \Coker(\f_5) \lra \F \lra \O_Z \lra 0,
\]
to the above resolution of $\Coker(\f_5)$ and to the resolution
\[
0 \lra \O(-4) \lra \O(-3) \oplus \O(-2) \stackrel{\psi}{\lra} \O(-1) \lra \O_Z \lra 0.
\]
We obtain a resolution of the form
\[
0 \lra \O(-4) \lra \O(-4) \oplus \O(-3) \oplus \O(-2) \oplus 2\O(-1) \lra \O(-1) \oplus 3\O \lra \F \lra 0.
\]
Since $\h^1(\F)=1$, the morphism $\O(-4) \to \O(-4)$ above is non-zero.
Canceling $\O(-4)$ we arrive at resolution \ref{4.1}. In view of the method at 3.1.6 \cite{drezet-maican},
we have proven that the canonical bijective map $W_3/G_3 \to X_3$ is an isomorphism.
\end{proof}

\begin{prop}
\label{4.4}
The sheaves $\F$ giving points in $\M(6,2)$ and satisfying the cohomological conditions
$\h^0(\F(-1))=1$, $\h^1(\F)=1$ are precisely the sheaves having resolution of the form
\[
0 \lra \O(-3) \oplus 2\O(-2) \stackrel{\f}{\lra} 2\O(-1) \oplus \O(1) \lra \F \lra 0,
\]
where $\f$ is not equivalent to a morphism represented by a matrix having one of the following
forms:
\[
\left[
\ba{ccc}
\star & 0 & 0 \\
\star & \star & \star \\
\star & \star & \star
\ea
\right], \quad \left[
\ba{ccc}
\star & \star & 0 \\
\star & \star & 0 \\
\star & \star & \star
\ea
\right], \quad \left[
\ba{ccc}
0 & 0 & \star \\
\star & \star & \star \\
\star & \star & \star
\ea
\right], \quad \left[
\ba{ccc}
0 & \star & \star \\
0 & \star & \star \\
\star & \star & \star
\ea
\right].
\]
\end{prop}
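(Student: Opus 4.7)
The plan is to follow the template of Propositions \ref{3.3} and \ref{4.1}. First I would derive the resolution from the Beilinson spectral sequence. From $\PP_\F(t)=6t+2$, $\h^0(\F(-1))=1$ and $\h^1(\F)=1$ one gets by Euler characteristic $\h^0(\F)=3$, $\h^1(\F(-1))=5$, together with the vanishing of $\h^2(\F(-i))$ for $i=0,1,2$ coming from semi-stability. A direct computation of $\h^0(\F \tensor \Om^1(1))$ (which I expect to equal $3$, matching the table in the introduction) then pins down the Beilinson tableau (2.2.3) of \cite{drezet-maican}. Running the spectral sequence and canceling the redundant summands, exactly as in 2.2.4 of \cite{mult_five} and in \ref{4.3} above, yields the stated resolution
\[
0 \lra \O(-3) \oplus 2\O(-2) \stackrel{\f}{\lra} 2\O(-1) \oplus \O(1) \lra \F \lra 0.
\]
The appearance of $\O(1)$ on the right reflects the nonzero section of $\F(-1)$, which induces an injection $\O(1) \hookrightarrow \F$ that survives to the last column of the resolution.

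Next I would check that the four forbidden canonical forms for $\f$ are excluded by the semi-stability of $\F$. In each case, the zero block in $\f$ produces a direct summand of source or target that splits off a proper subsheaf destabilising $\F$: for example, the first shape yields a quotient with resolution $0 \to 2\O(-2) \to 2\O(-1)\oplus \O(1) \to \F' \to 0$, so that the image $\O \hookrightarrow \F'$ obtained from the first column carries too large a slope; the second and third shapes produce analogous quotients supported on a conic or with a line-bundle subsheaf of $\F$; the fourth shape produces a sub-line bundle of the form $\O_L(k)$ with $k$ too large.

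For the converse I would assume $\F$ has the stated resolution and, for contradiction, pick a semi-stable destabilising subsheaf $\E \subset \F$. The Hilbert polynomial of $\E$ is of the form $rt+\chi$ with $1\le r \le 5$ and $\chi/r\ge 1/3$, and by Proposition \ref{2.3} the case $\PP_\E(t)=3t+1$ is ruled out. Invoking the classifications of \cite{drezet-maican}, \cite{mult_five} and \cite{mult_six_one}, I would list the possible resolutions of $\E$ and in each case fit the inclusion $\E \hookrightarrow \F$ into a commutative diagram of the form (*) at 3.1 of \cite{mult_six_one}. Analyzing the kernels of the maps between the source and target terms exactly as in the proof of \ref{3.3}, I would conclude that $\f$ is equivalent to one of the four forbidden matrices, a contradiction.

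The main obstacle I anticipate is the converse case analysis. The presence of $\O(1)$ in the target introduces destabilising candidates with resolutions involving $\O(-4)$ or summands of positive degree that do not appear in \ref{3.3}, and some of these require a more delicate study of $\Ker(\a)$ and $\Ker(\b)$ (with $\a,\b$ as in the loc.cit.\ diagram) to rule out. A secondary technical point will be the clean derivation of $\h^0(\F \tensor \Om^1(1))=3$ from the two given cohomological hypotheses, since this number is what selects the correct shape of the Beilinson tableau and hence of the resolution.
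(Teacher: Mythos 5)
Your forward direction is essentially the paper's. One small correction of emphasis: the paper does not first compute $\h^0(\F \tensor \Om^1(1))$ and then set up the tableau; it runs the Beilinson free monad for $\F$ with $m=\h^0(\F\tensor\Om^1(1))$ as an unknown, and $m=3$ drops out of the monad itself (the entries of the component $\eta_{31}\colon \O(-2)\to m\O(-1)$ span $V^*$, forcing $m\ge 3$, while surjectivity of $\F$ onto $\Coker(\f_{11},\f_{12})$ forces $m\le 3$). There is no direct computation of $m$ from the two cohomological hypotheses alone, so your ``secondary technical point'' is resolved inside the monad argument, not before it.

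The converse is where you genuinely diverge, and where your plan has a gap. The paper does not enumerate destabilising subsheaves by their resolutions. It first translates the four excluded matrix shapes into conditions on the submatrix $\psi=[\f_{11}\ \f_{12}]$ (its $2\times 2$ linear minor is nonzero and its two mixed minors are linearly independent modulo that conic), deduces $\Ker(\psi)\isom\O(-4)$ or $\O(-5)$, and uses the snake lemma on the column decomposition of $\f$ to exhibit $\F$ as an extension of $\Coker(\psi)$ (which is $\O_L(1)$, or has a length-one torsion piece, or is a length-$5$ torsion sheaf) by $\O_C(1)$ for a quintic or sextic $C$. Stability is then obtained by the slope estimates of 3.1.2 of \cite{mult_five} applied to an \emph{arbitrary} subsheaf, with only the two residual candidates $\O_L$, $\O_L(1)$ handled by your commutative-diagram argument. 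Your proposed enumeration, by contrast, relies on the mechanism of \ref{3.3}, where the candidate list was cut down by $\h^0(\E)<\h^0(\F)$, a consequence of $\F$ being globally generated. Here $\F$ is \emph{not} globally generated (all of $\H^0(\F)$ factors through the image of $\O(1)$), so you only get $\chi(\E)\le 3$, and the list of semi-stable candidates with $\pp(\E)>1/3$ grows to include classes in $\M(4,3)$, $\M(5,2)$, $\M(5,3)$ and others, each with several resolution types against which the diagram argument must be run. You flag this difficulty yourself but supply no mechanism for closing it, so as written the converse is not complete. Either find a substitute for the global-generation bound, or adopt the paper's extension-plus-slope-estimate route, for which the reformulation of the excluded matrix shapes in terms of the minors of $\psi$ is the essential first step.
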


\begin{proof}
Let $\F$ give a point in $\M(6,2)$ and satisfy the cohomological conditions from the proposition.
Write $m=\h^0(\F \tensor \Om^1(1))$.
The Beilinson free monad (2.2.1) \cite{drezet-maican} for $\F$ reads
\[
0 \lra \O(-2) \lra 5\O(-2) \oplus m\O(-1) \lra (m+2)\O(-1) \oplus 3\O \lra \O \lra 0
\]
and gives the resolution
\[
0 \lra \O(-2) \lra 5\O(-2) \oplus m\O(-1) \lra \Om^1 \oplus (m-1)\O(-1) \oplus 3\O \lra \F \lra 0.
\]
Using the Euler sequence and arguing as at 2.1.4 \cite{mult_five} we arrive at a resolution
\[
0 \lra \O(-2) \stackrel{\eta}{\lra} \O(-3) \oplus 2\O(-2) \oplus m\O(-1) \stackrel{\f}{\lra} (m-1)\O(-1) \oplus 3\O
\lra \F \lra 0,
\]
\[
\eta = \left[
\ba{c}
0 \\ 0 \\ \eta_{31}
\ea
\right], \qquad \f = \left[
\ba{ccc}
\f_{11} & \f_{12} & 0 \\
\f_{21} & \f_{22} & \f_{23}
\ea
\right].
\]
As at loc.cit., the entries of $\eta_{31}$ span $V^*$, hence $m \ge 3$.
From the fact that $\F$ maps surjectively onto $\Coker(\f_{11}, \f_{12})$ we get the reverse inequality.
Thus $m=3$, $\Coker(\eta_{31}) \isom \Om^1(1)$ and we have a resolution
\[
0 \lra \O(-3) \oplus 2\O(-2) \oplus \Om^1(1) \stackrel{\f}{\lra} 2\O(-1) \oplus 3\O \lra \F \lra 0
\]
in which $\f_{13}=0$.
Arguing as at loc.cit., we can show that $\Coker(\f_{23}) \isom \O(1)$, so we arrive at a resolution as in the
proposition. The conditions imposed on $\f$ follow from the semi-stability of $\F$.

Conversely, we assume that $\F$ has a resolution as in the proposition and we must show that
there are no destabilising subsheaves. Write
\[
\psi = \left[
\ba{cc}
\f_{11} & \f_{12}
\ea
\right] = \left[
\ba{ccc}
q_1 & \ell_{11} & \ell_{12} \\
q_2 & \ell_{21} & \ell_{22}
\ea
\right].
\]
As noted at 4.1 \cite{mult_six_one}, the conditions on $\f$ in the proposition are equivalent
to saying that
\[
\left|
\ba{cc}
\ell_{11} & \ell_{12} \\
\ell_{21} & \ell_{22}
\ea
\right| \neq 0 \quad \text{and} \quad \left|
\ba{cc}
q_1 & \ell_{11} \\
q_2 & \ell_{21}
\ea
\right|, \quad \left|
\ba{cc}
q_1 & \ell_{12} \\
q_2 & \ell_{22}
\ea
\right|
\]
are linearly independent in $\SS^3 V^*/ (\ell_{11} \ell_{22} - \ell_{12} \ell_{21})V^*$.
Thus the maximal minors of $\psi$ cannot have a quadratic common factor.
It follows that $\Ker(\psi) \isom \O(-4)$, if the maximal minors of $\psi$ have a linear common factor,
or $\Ker(\psi) \isom \O(-5)$, if they have no common factor.
From the snake lemma we have an exact sequence
\[
0 \lra \Ker(\psi) \lra \O(1) \lra \F \lra \Coker(\psi) \lra 0.
\]
Assume that $\Ker(\psi) \isom \O(-4)$. Because of the conditions on $\psi$ it is easy to check that
$\Coker(\psi)$ has zero-dimensional torsion of length at most $1$.
Assume that $\Coker(\psi)$ has no zero-dimensional torsion.
Then $\Coker(\psi) \isom \O_L(1)$ for a line $L \subset \P^2$
and we have an extension
\[
0 \lra \O_C(1) \lra \F \lra \O_L(1) \lra 0,
\]
where $C \subset \P^2$ is a quintic curve.
Let $\F' \subset \F$ be a non-zero subsheaf of multiplicity at most $5$.
Denote by $\CC$ its image in $\O_L(1)$ and put $\K= \F' \cap \O_C(1)$.
Let $\A$ be a sheaf as in 3.1.2 \cite{mult_five}. If $\CC=0$, then $\pp(\F') \le 0$ because $\O_C(1)$ is stable.
We may, therefore, assume that $\CC \neq 0$.
We can estimate the slope of $\F'$ as at loc.cit.:
\begin{align*}
\PP_{\F'}(t) & = \PP_{\K}(t) + \PP_{\CC}(t) \\
& = \PP_{\A}(t) - \h^0(\A/\K) + \PP_{\O_L(1)}(t) - \h^0(\O_L(1)/\CC) \\
& = (5-d)t + \frac{d^2-5d}{2} + t + 2 - \h^0(\A/\K) - \h^0(\O_L(1)/\CC),
\end{align*}
where $d$ is an integer, $1 \le d \le 4$. Thus
\[
\pp(\F')= \frac{1}{6-d}\left( \frac{d^2-5d}{2} + 2 - \h^0(\A/\K) - \h^0(\O_L(1)/\CC) \right) \le
\frac{d^2-5d+4}{2(6-d)} < \frac{1}{3} = \pp(\F).
\]
We see that in this case $\F$ is stable.
Assume next that $\Coker(\psi)$ has a zero-dimensional subsheaf $\TT$ of length $1$.
Let $\E$ be the preimage of $\TT$ in $\F$.
According to 3.1.5 \cite{mult_five}, $\E$ gives a point in $\M(5,1)$.
Let $\F'$ and $\CC$ be as above. If $\CC \subset \TT$, then $\F' \subset \E$, hence
$\pp(\F') \le \pp(\E) < \pp(\F)$. If $\CC$ is not a subsheaf of $\TT$, then we can estimate the slope
of $\F'$ as above concluding again that it is less than the slope of $\F$.

Assume now that $\Ker(\psi) \isom \O(-5)$. We have an extension
\[
0 \lra \O_C(1) \lra \F \lra \TT \lra 0,
\]
where $C \subset \P^2$ is a sextic curve and $\TT$ is a zero-dimensional sheaf of length $5$.
Let $\F' \subset \F$ be a subsheaf of multiplicity at most $5$, let $\TT'$ be its image in $\TT$
and put $\K = \F' \cap \O_C(1)$. As above, we have
\begin{align*}
\PP_{\F'}(t) & = \PP_{\K}(t) + \h^0(\TT') \\
& = \PP_{\A}(t) - \h^0(\A/\K) + \h^0(\TT') \\
& = (6-d)t + \frac{d^2-5d-6}{2} - \h^0(\A/\K) + \h^0(\TT'), \\
\pp(\F') & = - \frac{d+1}{2} + \frac{\h^0(\TT') - \h^0(\A/\K)}{6-d} \le - \frac{d+1}{2} + \frac{5}{6-d},
\end{align*}
where $d$ is an integer, $1 \le d \le 5$. We see from this that $\pp(\F') < \pp(\F)$
except, possibly, when $d=5$ and $\PP_{\F'}(t)= t+1$ or $t+2$, i.e. when $\F'$ is isomorphic to
$\O_L$ or $\O_L(1)$ for a line $L \subset \P^2$.
These situations can easily be ruled out.
If, say, $\O_L$ were a subsheaf of $\F$, then we would get a commutative diagram
\[
\xymatrix
{
0 \ar[r] & \O(-1) \ar[r] \ar[d]^-{\b} & \O \ar[r] \ar[d]^-{\a} & \O_L \ar[r] \ar[d] & 0 \\
0 \ar[r] & \O(-3) \oplus 2\O(-2) \ar[r] & 2\O(-1) \oplus \O(1) \ar[r] & \F \ar[r] & 0
}
\]
in which $\a$ is injective, because it is injective on global sections.
Thus $\b$ is also injective, which is absurd.
We conclude that $\F$ is stable.
\end{proof}

\noi
Let $\W_4= \Hom(\O(-3) \oplus 2\O(-2), 2\O(-1) \oplus \O(1))$ and let $W_4 \subset \W_4$
be the set of morphisms $\f$ from proposition \ref{4.4}.
Let
\[
G_4 = (\Aut(\O(-3) \oplus 2\O(-2)) \times \Aut(2\O(-1) \oplus \O(1)))/\C^*
\]
be the natural group acting by conjugation on $\W_4$. Let $X_4 \subset \M(6,2)$
be the set of stable-equivalence classes of sheaves of the form $\Coker(\f)$, $\f \in W_4$.

\begin{prop}
\label{4.5}
There exists a geometric quotient $W_4/G_4$, which is isomorphic to $X_4$.
In particular, $X_4$ is irreducible and has codimension $5$.
\end{prop}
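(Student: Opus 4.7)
The plan is to mirror Proposition \ref{3.4}. Consider the canonical $G_4$-invariant
map $\rho \colon W_4 \to X_4$ sending $\f$ to the stable-equivalence class of
$\Coker(\f)$. I aim to show this is a categorical quotient, which can then be
upgraded to a geometric quotient via normality.

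For the categorical quotient property, I would show that, for every $\F$ giving a
point in $X_4$, resolution \ref{4.4} is recovered from $\F$ in a canonical manner
via the Beilinson spectral sequence. This is essentially contained in the proof of
\ref{4.4}: starting from the Beilinson free monad, the Euler sequence converts the
$\Om^1$ summand and its partners into $\O(-3) \oplus 2\O(-2)$, while the identification
$\Coker(\f_{23}) \isom \O(1)$ arranges the target into the desired shape. The
method of 3.1.6 \cite{drezet-maican} then yields that $\rho$ is a categorical quotient.

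The upgrade proceeds exactly as in \ref{3.4}: remark (2) on p.\ 5 of \cite{mumford}
gives normality of $X_4$, and theorem 4.2 of \cite{popov-vinberg} promotes the
categorical quotient to a geometric one. The codimension follows from a direct
dimension count. Decomposing $\W_4$ by source and target summands gives
$\dim \W_4 = 2 \cdot 6 + 15 + 4 \cdot 3 + 2 \cdot 10 = 59$, and similarly
$\dim G_4 = 11 + 17 - 1 = 27$. Since $W_4 \subset \W_4$ is a non-empty open
subset, $\dim X_4 = 32$, which is codimension $5$ in the $37$-dimensional
$\M(6,2)$; irreducibility of $X_4$ follows from the fact that $\W_4$ is affine.

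The most delicate point is the canonicity of the Beilinson construction: one must
check that the Euler sequence reduction and the cancellation of redundant trivial
summands can be performed naturally in $\F$, so that the morphism $\f$ is
well-defined up to the $G_4$-action and not merely up to some a priori larger
equivalence. Once this is secured, the rest of the argument is essentially formal
and parallels \ref{3.4}.
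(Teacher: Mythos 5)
Your proposal is correct and follows essentially the same route as the paper: establish that resolution \ref{4.4} arises naturally from the Beilinson spectral sequence (so that the method of 3.1.6 \cite{drezet-maican} gives a categorical quotient), then upgrade to a geometric quotient via normality (\cite{mumford}, remark (2), p.~5) and theorem 4.2 of \cite{popov-vinberg}. The only differences are cosmetic: the paper runs the Beilinson argument on the dual sheaf $\G=\F^\D(1)$ rather than reusing the direct construction from \ref{4.4}, and your explicit dimension count ($59-27=32$, codimension $5$ in the $37$-dimensional moduli space) makes precise what the paper leaves implicit.
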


\begin{proof}
The Beilinson diagram (2.2.3) \cite{drezet-maican} for the dual sheaf $\G= \F^\D(1)$ giving
a point in $\M(6,4)$ has the form
\[
\xymatrix
{
3\O(-2) \ar[r]^-{\f_1} & 3\O(-1) \ar[r]^-{\f_2} & \O \\
\O(-2) \ar[r]^-{\f_3} & 5\O(-1) \ar[r]^-{\f_4} & 5\O
}.
\]
As in the proof of 2.2.4 \cite{mult_five}, we have $\Ker(\f_2)= \Im(\f_1)$ and $\Ker(\f_1) \isom \O(-3)$.
Combining the exact sequences (2.2.4) and (2.2.5) \cite{drezet-maican} we get the resolution
\[
0 \lra \O(-2) \stackrel{\psi}{\lra} \O(-3) \oplus 5\O(-1) \lra 5\O \lra \G \lra 0.
\]
As in the proof of 2.1.4 \cite{mult_five}, we have $\Coker(\psi) \isom \O(-3) \oplus 2\O(-1) \oplus \Om^1(1)$.
We get the resolution
\[
0 \lra \O(-3) \oplus 2\O(-1) \oplus \Om^1(1) \stackrel{\f}{\lra} 5\O \lra \G \lra 0.
\]
As at loc.cit., we have $\Coker(\f_{13}) \isom 2\O \oplus \O(1)$. We finally arrive at the resolution
dual to resolution \ref{4.4}:
\[
0 \lra \O(-3) \oplus 2\O(-1) \lra 2\O \oplus \O(1) \lra \G \lra 0.
\]
This proves that the map $W_4 \to X_4$ is a categorical quotient.
According to \cite{mumford}, remark (2), p. 5, $X_4$ is normal.
Applying \cite{popov-vinberg}, theorem 4.2,
we conclude that the map $W_4 \to X_4$ is a geometric quotient.
\end{proof}

\begin{prop}
\label{4.6}
The generic sheaves in $X_4$ are of the form $\O_C(1)(P_1 + \cdots + P_5)$, where $C \subset \P^2$
is a smooth sextic curve and $P_i$ are five distinct points on $C$, no three of which are colinear.
In particular, $X_4$ lies in the closure of $X_1$.
\end{prop}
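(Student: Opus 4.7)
The plan is to identify the sheaves on a dense open subset of $X_4$ using the extension from \ref{4.4}, and then to deform them into $X_1$ as in the proof of \ref{4.2}.

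For the first step, define $X_{40} \subset X_4$ to be the open subset of points represented by sheaves $\F = \Coker(\f)$ satisfying: (a) $\det(\f)$ cuts out a smooth sextic $C \subset \P^2$; (b) $\Ker(\psi) \isom \O(-5)$, where $\psi = [\f_{11} \ \f_{12}]$, so that one falls into the second case of the proof of \ref{4.4}; (c) the length-five sheaf $\TT = \Coker(\psi)$ is isomorphic to $\C_{P_1} \oplus \cdots \oplus \C_{P_5}$ for five distinct points $P_1, \ldots, P_5$ on $C$, no three of which are colinear. Each of these is a non-empty open condition on $W_4$. Proposition \ref{4.4} then furnishes the extension
\[
0 \lra \O_C(1) \lra \F \lra \C_{P_1} \oplus \cdots \oplus \C_{P_5} \lra 0.
\]
Since $\F$ is pure of dimension one, no skyscraper $\C_{P_i}$ can split off as a direct summand, so the extension is non-split over each $\C_{P_i}$. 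At a smooth point $P_i \in C$, the stalk $\F_{P_i}$ is a rank-one torsion-free module over the discrete valuation ring $\O_{C, P_i}$, hence free, and the local extension $0 \to \O_{C, P_i}(1) \to \F_{P_i} \to \C \to 0$ forces $\F_{P_i} \isom \O_{C, P_i}(1)(P_i)$. Gluing the local data yields $\F \isom \O_C(1)(P_1 + \cdots + P_5)$.

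For the second assertion, fix such an $\F = \O_C(1)(P_1 + \cdots + P_5)$ in $X_{40}$ and pick an auxiliary point $P_0 \in C$ distinct from $P_1, \ldots, P_5$ and not on the conic through them. Choose one-parameter families of points $R(t), S(t)$ on $C$ with $R(0) = S(0) = P_0$, such that for $t \neq 0$ the seven points $P_1, \ldots, P_5, R(t), S(t)$ are distinct and the six points $P_1, \ldots, P_5, R(t)$ do not lie on a conic. Form
\[
\F_t = \O_C(1)(P_1 + \cdots + P_5 + R(t) - S(t)).
\]
By \ref{3.2}, $\F_t$ represents a point of $X_1$ for each $t \neq 0$. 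The degree-zero divisor $R(t) - S(t)$ becomes linearly equivalent to zero as $R(t), S(t) \to P_0$, hence $[\F_t] \to [\F]$ in $\M(6,2)$ as $t \to 0$, so $\F \in \overline{X}_1$.

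The main delicate point is verifying condition (c) in the definition of $X_{40}$: that for generic $\f$ the length-five scheme $\TT$ consists of five distinct points of $C$ in sufficiently general position, no three colinear. This is an open condition; since the morphism $\f$ has enough parameters to move the support of $\TT$ freely on $C$, non-emptiness is not an issue, and the remainder of the argument runs in close parallel to \ref{4.2}.
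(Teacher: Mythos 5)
Your identification of the generic sheaf proceeds by a different and legitimate route from the paper's: instead of the explicit matrix computation that locates the five points as the residual intersection of the conic $F=\{\ell_{11}\ell_{22}-\ell_{12}\ell_{21}=0\}$ with a cubic $G$, you read the answer off the extension $0\to\O_C(1)\to\F\to\TT\to 0$ from \ref{4.4} together with purity of $\F$ and the local structure of torsion-free rank-one modules over the discrete valuation rings $\O_{C,P_i}$. That part is sound. The limiting argument for $X_4\subset\overline{X}_1$ is essentially the paper's (collide the two auxiliary points), though the justification ``$R(t)-S(t)$ becomes linearly equivalent to zero'' is not literally correct on a positive-genus curve; what you mean is that $\O_C(R(t)-S(t))$ converges to $\O_C$ in $\operatorname{Pic}^0(C)$, so the $\F_t$ fit into a flat family with central fibre $\F$.

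The genuine gap is the converse, which you omit entirely: that \emph{every} sheaf $\O_C(1)(P_1+\cdots+P_5)$ with the $P_i$ in general linear position gives a point in $X_4$ (equivalently, lies in your $X_{40}$). This occupies more than half of the paper's proof: one reconstructs the resolution of \ref{4.4} from the extension by the horseshoe lemma, and the nontrivial step is lifting $2\O(-1)\to\O_Z$ to $2\O(-1)\to\F$, which the paper establishes by a Serre-duality computation showing $\a^*\circ\d^*=0$, followed by the non-vanishing of the induced map $\O(-5)\to\O(-5)$. This converse is not decorative: it is invoked verbatim in the proof of \ref{5.3} (``According to proposition \ref{4.6}, $\O_C(1)(R_1+\cdots+R_5)$ gives a point in $X_4$''), and it is also what substantiates your own unproved assertion that your condition (c) is non-empty. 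On that last point, note that the length-five scheme $\Coker(\psi)$ always lies on the conic $F$ (since $\det$ of the last two columns is one of the maximal minors), so ``no three colinear'' is governed by the irreducibility of $F$; the remark that ``$\f$ has enough parameters to move the support of $\TT$ freely'' is an assertion, not an argument. You should either supply the converse construction or, at minimum, exhibit an explicit $\f\in W_4$ satisfying (a)--(c) to secure density of $X_{40}$.
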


\begin{proof}
Let $X_{40} \subset X_4$ be the subset defined by the following conditions:
the sextic curve $C$ given by the equation $\det(\f) = 0$ is smooth, the conic curve $F$
given by the equation $f= \ell_{11} \ell_{22} - \ell_{12} \ell_{21} =0$ is irreducible,
there are constants $c_1, c_2 \in \C$ such that the cubic curve $G$ with equation
\[
c_1 \left|
\ba{cc}
q_1 & \ell_{11} \\
q_2 & \ell_{21}
\ea
\right| + c_2 \left|
\ba{cc}
q_1 & \ell_{12} \\
q_2 & \ell_{22}
\ea
\right| = 0
\]
meets $F$ at six distinct points $P_1, \ldots, P_6$ (notations as at \ref{4.4}).
Let $\F = \Coker(\f)$ give a point in $X_{40}$.
Performing, possibly, column operations on the matrix representing $\f$
we may assume that $c_1 = 0, c_2=1$ and that $P_6$ is given by the equations
$\ell_{12}=0$, $\ell_{22}=0$. Then $\Coker(\psi) \isom \O_Z$, where $Z$ is the union of
$P_1, \ldots, P_5$.
As at \ref{4.4}, $\F$ is an extension of $\O_Z$ by $\O_C(1)$, hence $\F \isom \O_C(1)(P_1 + \cdots
+ P_5)$. Since $P_1, \ldots, P_5$ are on the irreducible conic $F$, no three of them are colinear.

Conversely, we must show that every sheaf of the form $\O_C(1)(P_1+ \cdots +P_5)$
gives a point in $X_{40}$.
Let $F \subset \P^2$ be a conic curve containing $P_1, \ldots, P_5$.
Because these points are assumed to be in general linear position, $F$ is irreducible.
Choose a sixth point $P_6 \in F$ distinct from the others.
Let $G \subset \P^2$ be a cubic curve meeting $F$ precisely at $P_1, \ldots, P_6$
(for example, the union of the three lines $P_1 P_2, P_3 P_4, P_5 P_6$).
Choose equations $f=0$, $g=0$ for $F$, $G$.
Choose equations $\ell_{12} = 0$, $\ell_{22} = 0$ for $P_6$.
We may write $f= \ell_{11} \ell_{22} - \ell_{12} \ell_{21}$, $g = q_1 \ell_{22} - q_2 \ell_{12}$
for some $\ell_{11}, \ell_{21} \in V^*$ and $q_1, q_2 \in \SS^2 V^*$.
Let $\psi \colon \O(-3) \oplus 2\O(-2) \to 2\O(-1)$ be the morphism represented
by the matrix
\[
\left[
\ba{ccc}
q_1 & \ell_{11} & \ell_{12} \\
q_2 & \ell_{21} & \ell_{22}
\ea
\right].
\]
We have $\Coker(\psi) \isom \O_Z$, where $Z$ is the union of $P_1, \ldots, P_5$.
By construction, the maximal minors of $\psi$ have no common factor, hence
$\Ker(\psi) \isom \O(-5)$.
We apply the horseshoe lemma to the extension
\[
0 \lra \O_C(1) \lra \F \lra \O_Z \lra 0,
\]
to the standard resolution of $\O_C(1)$ and to the resolution
\[
0 \lra \O(-5) \lra \O(-3) \oplus 2\O(-2) \stackrel{\psi}{\lra} 2\O(-1) \lra \O_Z \lra 0.
\]
We claim that the morphism $2\O(-1) \to \O_Z$ lifts to a morphism $2\O(-1) \to \F$.
To see this let $\a \colon \H^0(2\O) \to \H^0(\O_Z)$ be the induced morphism and let
$\d \colon \H^0(\O_Z) \to \H^1(\O_C(2))$ be the connecting homomorphism associated
to the exact sequence
\[
0 \lra \O_C(2) \lra \O_C(2)(P_1 + \cdots + P_5) \lra \O_Z \lra 0.
\]
We must show that $\d \circ \a = 0$. We will show that $\a^* \circ \d^* =0$.
Taking duals in the above resolution of $\O_Z$ we obtain the resolution
\[
0 \lra 2\O(-3) \lra 2\O(-2) \oplus \O(-1) \lra \O(1) \lra {\mathcal Ext}^2(\O_Z, \omega_{\P^2}) \lra 0.
\]
The induced map on global sections
\[
V^* \isom \H^0(\O(1)) \lra \H^0({\mathcal Ext}^2(\O_Z, \omega_{\P^2})) \isom \Ext^2(\O_Z, \omega_{\P^2})
\isom \H^0(\O_Z)^*
\]
can be identified with $\d^*$ because, by Serre duality, $\d^*$ is the restriction homomorphism
\[
V^* \isom \H^0(\O_C(1)) \isom \H^0(\O_C(2)^* \tensor \omega_C) \isom \H^1(\O_C(2))^* \lra \H^0(\O_Z)^*.
\]
The induced map
\[
\Ext^2(\O_Z, \omega_{\P^2}) \isom
\H^0({\mathcal Ext}^2(\O_Z, \omega_{\P^2})) \lra \H^2(2\O(-3)) \isom \Ext^2(2\O, \omega_{\P^2})
\]
can be identified with $\a^*$. It is clear now that we have $\a^* \circ \d^* =0$, proving the claim.
We obtain the resolution
\[
0 \lra \O(-5) \lra \O(-5) \oplus \O(-3) \oplus 2\O(-2) \lra 2\O(-1) \oplus \O(1) \lra \F \lra 0.
\]
Since $\Ext^1(\O_Z, \O(1))=0$, the argument at 2.3.2 \cite{mult_five} applies to show that
the morphism $\O(-5) \to \O(-5)$ above is non-zero.
Canceling $\O(-5)$ we obtain a resolution that places $\F$ in $X_{40}$.

The inclusion $X_4 \subset \overline{X}_1$ follows from the fact that any sheaf of the form
$\O_C(1)(P_1 + \cdots + P_5)$ as above is the limit of a sequence of sheaves
$\O_C(1)(P_1 + \cdots + P_6 - P_7)$ as at \ref{3.2}.
\end{proof}

\begin{prop}
\label{4.7}
$X_4$ lies in the closure of $X_2$.
\end{prop}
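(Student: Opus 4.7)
My plan is to follow the strategy of the analogous closure statements at the end of Propositions \ref{4.2} and \ref{4.6}: exhibit a generic sheaf of $X_4$ as the limit of an algebraic family of sheaves in $X_2$, by writing both as line bundles on a smooth sextic and deforming the divisor class. The guiding observation is that both strata consist (generically) of line bundles of degree $11$ on a smooth sextic with $\h^0 = 3$ and $\h^1 = 1$; the distinction is whether the degree-$5$ twist $\F(-1)$ is effective or not. The deformation I construct moves the line bundle out of the "effective" locus while staying in the Brill-Noether condition $h^0 \ge 3$.

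I begin with a generic $\F = \O_C(1)(P_1 + \cdots + P_5) \in X_4$, as provided by \ref{4.6}, with $C$ a smooth sextic and $P_1, \ldots, P_5$ in general linear position on $C$. A conic $F \subset \P^2$ through $P_1, \ldots, P_5$ meets $C$ in seven residual points $Q_1, \ldots, Q_7$, and since $P_1 + \cdots + P_5 + Q_1 + \cdots + Q_7 \sim 2H$ on $C$ by Bezout, one has $\F \isom \O_C(3)(-Q_1 - \cdots - Q_7)$. I then take an algebraic family $(Q_i(t))_{t \in T} \subset C^7$ with $Q_i(0) = Q_i$ and, for $t \neq 0$, the seven points $Q_i(t)$ in general linear position on $C$ (in particular, not contained in any conic of $\P^2$). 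Setting $\F_t = \O_C(3)(-Q_1(t) - \cdots - Q_7(t))$ produces, via the universal divisor on $C \times T$ pushed forward to $\P^2 \times T$, a flat family of stable sheaves in $\M(6,2)$ with $\F_0 \isom \F$, hence a morphism $T \to \M(6,2)$ through $[\F]$ at $t = 0$.

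To finish I verify that $\F_t \in X_2$ for generic $t \neq 0$ via the cohomological criterion of \ref{3.3}. First, $\h^0(\F_t(-1)) = \h^0(\O_C(2)(-\sum Q_i(t))) = 0$ because no conic of $\P^2$ passes through $7$ general points on $C$. Next, by Serre duality $\h^1(\F_t) = \h^0(\O_C(\sum Q_i(t)))$, which equals $1$ by Riemann-Roch since seven general points impose independent conditions on cubics of $\P^2$, so that $\h^0(\omega_C(-\sum Q_i(t))) = \h^0(\O_C(3)(-\sum Q_i(t))) = 3$. Finally, from the Euler sequence $0 \to \Om^1(1) \to 3\O \to \O(1) \to 0$ together with surjectivity of the multiplication map $V \tensor \H^0(\F_t) \to \H^0(\F_t(1))$ for generic $\F_t$, the dimension count $9 - 8 = 1$ gives $\h^0(\F_t \tensor \Om^1(1)) = 1$. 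Proposition \ref{3.3} then places $\F_t$ in $X_2$, so $\F \in \overline{X_2}$; by irreducibility of $X_4$ (cf.\ \ref{4.5}), $X_4 \subset \overline{X_2}$. The main obstacle will be the surjectivity of the multiplication map controlling $\h^0(\F_t \tensor \Om^1(1))$, which must hold on a non-empty open subset of $T$; this follows because failure of surjectivity imposes a proper Zariski-closed condition on the $7$-tuples $(Q_i(t))$, and the family has been chosen to vary generically on $C^7$.
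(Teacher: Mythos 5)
Your approach is genuinely different from the paper's. The paper passes to the dual sheaves in $\M(6,4)$, parametrises the locus $\h^0(\G(-1))=1$, $\h^0(\G(-2))=0$ by Beilinson monads, and shows by a smoothness and surjective-differential argument that the monads with $\h^1(\G)=1$ lie in the closure of those with $\h^1(\G)=0$; this yields $X_4\subset\overline{X_2\cup X_3}$, and the conclusion follows from $X_3\subset\overline{X}_2$. You instead deform divisor classes on a fixed smooth sextic, in the style the paper itself uses for the closure statements in \ref{4.2}, \ref{4.6}, \ref{5.3} and \ref{6.3}. Your identity $\O_C(1)(P_1+\cdots+P_5)\isom\O_C(3)(-Q_1-\cdots-Q_7)$ via the residual intersection with the conic is correct, the family $\F_t=\O_C(3)(-\sum Q_i(t))$ is a legitimate flat family of stable sheaves specialising to $\F$, and the verifications $\h^0(\F_t(-1))=0$ and $\h^1(\F_t)=1$ for general $Q_i(t)$ are sound. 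Combined with the irreducibility of $X_4$ and the density of $X_{40}$ from \ref{4.6}, this gives a more elementary and more geometric argument than the paper's -- provided the third cohomological condition is actually established.

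That last step is where you have a genuine gap. To get $\h^0(\F_t\tensor\Om^1(1))=1$ you need the multiplication map $V^*\tensor\H^0(\F_t)\to\H^0(\F_t(1))$ to be surjective, and also $\h^1(\F_t(1))=0$ (used implicitly to get $\h^0(\F_t(1))=8$). You assert that failure of surjectivity is a \emph{proper} Zariski-closed condition on $C^7$, but properness is precisely the content of the claim: you must exhibit at least one $7$-tuple for which the map is surjective, and none is produced, so as written the argument is circular. The gap is easy to close with the paper's own results: by \ref{3.3} the conditions $\h^0(\F_t(-1))=0$, $\h^1(\F_t)=1$ already force $\h^0(\F_t\tensor\Om^1(1))\in\{1,2\}$, hence $\F_t\in X_2\cup X_3$ by \ref{3.3} and \ref{4.1}, and since $X_3\subset\overline{X}_2$ by \ref{4.2} you obtain $X_4\subset\overline{X_2\cup X_3}\subset\overline{X}_2$ without ever touching the multiplication map. (Alternatively, a dimension count closes it: the pairs $(C,\sum Q_i)$ with $\h^0(\O_C(\sum Q_i))=1$ give a $34$-dimensional family of pairwise non-isomorphic sheaves, whereas $\dim X_3=32$, so the general member cannot lie in $X_3$.) With either repair the proof is complete.
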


\begin{proof}
The argument can be found at 2.1.6 \cite{mult_five} and can be traced back to 3.2.3 \cite{drezet-maican}.
Let $Y \subset \M(6,4)$ be the subset
of stable-equivalence classes of sheaves $\G$ satisfying the conditions $\h^0(\G(-1))=1$,
$\h^0(\G(-2))=0$. We claim that for any such sheaf we have the relation
$\h^0(\G \tensor \Om^1)=0$.
To see this denote $m = \h^0(\G \tensor \Om^1)$ and consider the Beilinson diagram
(2.2.3) \cite{drezet-maican} for $\G(-1)$:
\[
\xymatrix
{
8\O(-2) \ar[r]^-{\f_1} & (m+10)\O(-1) \ar[r]^-{\f_2} & 3\O \\
0 & m\O(-1) \ar[r]^-{\f_4} & \O
}.
\]
As $\f_4$ is injective, we have $m=0$ or $1$. If $m=1$, then $\Coker(\f_4) \isom \O_L$
for a line $L \subset \P^2$.
The exact sequence (2.2.5) \cite{drezet-maican} reads
\[
0 \lra \Ker(\f_1) \lra \O_L \lra \G(-1) \lra \Ker(\f_2)/\Im(\f_1) \lra 0.
\]
The map $\O_L \to \G(-1)$ is zero because $\pp(\O_L) > \pp(\G(-1))$ and both sheaves are semi-stable.
Thus $\O_L \isom \Ker(\f_1)$, which is absurd.

Using the Beilinson monad for $\G(-1)$ we see that $Y$ is parametrised by an open subset $M$
inside the space of monads
\[
0 \lra 8\O(-1) \stackrel{A}{\lra} 10\O \oplus \O(1) \stackrel{B}{\lra} 3\O(1) \lra 0
\]
satisfying $B_{12}=0$.
Consider the map $\Phi \colon M \to \Hom(10\O, 3\O(1))$ defined by $\Phi(A,B)= B_{11}$.
Using the vanishing of $\H^1(\G(1))$ for an arbitrary sheaf $\G$ giving a point in $Y$
(cf. 2.1.3 \cite{drezet-maican}), we can prove that $M$ is smooth and that $\Phi$ has surjective differential
at every point.
This further leads to the conclusion that the set of monads in $M$ whose cohomology
sheaf $\G$ satisfies the relation $\h^1(\G)=1$ is included in the closure of the set of monads
for which $\h^1(\G)=0$.
Thus $X_4^\D$ lies in the relative closure of $X_2^\D \cup X_3^\D$ in $Y$.
It follows that $X_4 \subset \overline{X_2 \cup X_3}$.
Since $X_3 \subset \overline{X}_2$, the conclusion follows.
\end{proof}


\section{The codimension $7$ stratum}

\begin{prop}
\label{5.1}
The sheaves $\F$ giving points in $\M(6,2)$ and satisfying the conditions
$\h^0(\F(-1))=1$, $\h^1(\F)=2$ are precisely the sheaves having resolution of the form
\[
0 \lra 2\O(-3) \oplus \O(-1) \stackrel{\f}{\lra} \O(-2) \oplus \O \oplus \O(1) \lra \F \lra 0,
\]
where $\f_{11}$ has linearly independent entries, $\f_{22} \neq 0$ and does not divide $\f_{32}$.
\end{prop}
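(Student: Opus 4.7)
The plan is to mirror the two-stage template of propositions \ref{3.3}, \ref{4.1} and \ref{4.4}: first deduce the resolution and the conditions on $\f$ from the cohomological data and semi-stability, then prove the converse by ruling out destabilising subsheaves.

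For the forward direction, start with $\F \in \M(6,2)$ satisfying $\h^0(\F(-1)) = 1$ and $\h^1(\F) = 2$. Combined with $\PP_{\F}(t) = 6t+2$ this yields $\h^0(\F) = 4$ and $\h^1(\F(-1)) = 5$, while semi-stability of $\F$ (slope $1/3$) forces $\h^0(\F(-2)) = 0$ (a section would yield a subsheaf of slope $\ge 2$) and $\h^1(\F(1)) = 0$. Apply the Beilinson free monad to $\F$ as in the proof of 2.1.4 \cite{mult_five} and replace the $\Om^1(1)$-summands via the Euler sequence $0 \to \Om^1(1) \to 3\O \to \O(1) \to 0$; after cancelling matching line-bundle summands one is left with the resolution
\[
0 \lra 2\O(-3) \oplus \O(-1) \stackrel{\f}{\lra} \O(-2) \oplus \O \oplus \O(1) \lra \F \lra 0,
\]
the key numerical input being $\h^0(\F \tensor \Om^1(1)) = 4$, forced by the same monad computation using semi-stability. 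Note that $\f_{12} = 0$ automatically since $\Hom(\O(-1), \O(-2)) = 0$. The remaining three conditions on $\f$ are forced by semi-stability: if the entries of $\f_{11}$ were linearly dependent, a column operation in $2\O(-3)$ would zero out one entry of $\f_{11}$, producing a quotient $\F \twoheadrightarrow \O_H(-2)$ onto the line $H$ cut out by the surviving entry, of slope $-1 < 1/3$; if $\f_{22} = 0$, the subcomplex $\O(-1) \to \O(1)$ given by $\f_{32}$ would split off, yielding $\O_C(1) \hookrightarrow \F$ for the conic $C = \{\f_{32} = 0\}$, of slope $3/2 > 1/3$; and if $\f_{22}$ divided $\f_{32}$, a row operation on $\O(-2) \oplus \O \oplus \O(1)$ would kill $\f_{32}$, yielding $\O_L \hookrightarrow \F$ for the line $L = \{\f_{22} = 0\}$, of slope $1 > 1/3$. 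Each contradicts semi-stability.

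For the converse, assume that $\F$ has the stated resolution with the three conditions satisfied and show that $\F$ is semi-stable. Following the snake-lemma strategy of \ref{4.4}, apply the splitting $\O(-2) \oplus \O \oplus \O(1) = (\O(-2) \oplus \O) \oplus \O(1)$ and set $\psi$ to be the upper $2 \times 3$ block of $\f$, so that $\psi \colon 2\O(-3) \oplus \O(-1) \to \O(-2) \oplus \O$. The three conditions on $\f$ force the greatest common factor of the three $2\times 2$ minors of $\psi$ to be at most linear, so $\Ker(\psi) \isom \O(-a)$ with $a \in \{4,5\}$, and the snake lemma yields
\[
0 \lra \Ker(\psi) \lra \O(1) \lra \F \lra \Coker(\psi) \lra 0,
\]
with $\Coker(\psi)$ supported on a plane curve of degree at most $1$ plus possibly a zero-dimensional sheaf, whose subsheaves are classified in \cite{drezet-maican} and \cite{mult_five}. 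Now assume $\E \subset \F$ destabilising and semi-stable. Setting $\K = \E \cap \bigl(\O(1)/\Ker(\psi)\bigr)$ and $\CC$ the image of $\E$ in $\Coker(\psi)$, the identity $\PP_{\E}(t) = \PP_{\K}(t) + \PP_{\CC}(t)$ combined with slope estimates in the style of 3.1.2 \cite{mult_five} yields $\pp(\E) < 1/3 = \pp(\F)$ in every admissible configuration of $(\K,\CC)$, contradicting the choice of $\E$.

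The main obstacle will be the low-multiplicity borderline cases: potential destabilisers of the form $\O_L$, $\O_L(1)$ or structure sheaves of conics can have slope close to $1/3$, and are ruled out by lifting a standard resolution of $\E$ to a sub-resolution of that of $\F$, as at the end of the proof of \ref{4.4}, and extracting a contradiction with the linear independence of the entries of $\f_{11}$, the non-vanishing of $\f_{22}$, or the non-divisibility of $\f_{32}$ by $\f_{22}$.
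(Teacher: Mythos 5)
Your proposal is correct in outline but takes a genuinely different route from the paper in both directions, and the differences matter. For the forward implication the paper does not run the Beilinson monad on $\F$ itself: it passes to the dual sheaf $\G=\F^\D(1)$ in $\M(6,4)$, so that the contribution of $\h^1(\F)=2$ appears as a subobject $2\O(-2)$ on the left of the display, where it can be analysed via the canonical form of $\psi_{31}\colon 2\O(-2)\to (m+2)\O(-1)$ (this is what actually forces $m=\h^0(\F\tensor\Om^1(1))=4$, which you assert without mechanism); running the monad directly on $\F$ instead produces a quotient term $2\O$ on the right, as in \ref{7.2}, which your sketch does not confront. Also, $\h^1(\F(1))=0$ does not follow from semi-stability alone but from the classification of sheaves with $\h^1(\F(1))>0$ in \ref{6.1}/\ref{7.3}, though you do not appear to need it. For the converse the divergence is more substantial: you split off the $\O(1)$ row, imitating \ref{4.4}, and obtain $0\to\Ker(\psi)\to\O(1)\to\F\to\Coker(\psi)\to 0$ with $\Ker(\psi)\isom\O(-4)$ or $\O(-5)$, which forces a case division and leaves the borderline candidates $\O_L$, $\O_L(1)$ to be excluded by diagram-lifting; this does work (for instance a lift of $\O_L\subset\F$ forces $\f_{22}=0$ or $\f_{22}\mid\f_{32}$), but it is exactly the part you defer as ``the main obstacle.'' The paper instead splits off the block $\f_{11}\colon 2\O(-3)\to\O(-2)$, whose linearly independent entries give $\Coker(\f_{11})\isom\C_x$, and the snake lemma yields a single extension $0\to\J_Z(2)\to\F\to\C_x\to 0$ with $Z$ the length-$2$ scheme cut out by $(\f_{22},\f_{32})$; the resulting estimate $\pp(\F')\le\tfrac{1-d}{2}+\tfrac{1}{6-d}<\tfrac{1}{3}$ is uniform in $d$ and disposes of all subsheaves at once, with no borderline cases. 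Your decomposition buys nothing here and costs the extra case analysis, so if you keep it you must actually carry out the exclusions rather than gesture at them.
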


\begin{proof}
Let $\F$ give a point in $\M(6,2)$ and satisfy the cohomological conditions from above.
Put $m= \h^0(\F \tensor \Om^1(1))$. Let $\G = \F^\D(1)$. The Beilinson monad for $\G$ gives the
resolution
\[
0 \lra 2\O(-2) \lra 4\O(-2) \oplus (m+2)\O(-1) \lra \Om^1 \oplus (m-3)\O(-1) \oplus 5\O \lra \G \lra 0.
\]
Using the Euler sequence and arguing as at 2.1.4 \cite{mult_five} we arrive at a resolution
\[
0 \lra 2\O(-2) \stackrel{\psi}{\lra} \O(-3) \oplus \O(-2) \oplus (m+2)\O(-1) \stackrel{\f}{\lra}
(m-3)\O(-1) \oplus 5\O \lra \G \lra 0,
\]
\[
\psi = \left[
\ba{c}
0 \\ 0 \\ \psi_{31}
\ea
\right], \qquad \f= \left[
\ba{ccc}
\f_{11} & \f_{12} & 0 \\
\f_{21} & \f_{22} & \f_{23}
\ea
\right].
\]
Arguing as in the proof of 3.2.5 \cite{mult_five}, we see that, modulo operations on rows and columns,
$\psi_{31}$ is represented by a matrix of the form
\[
\left[
\ba{ccccccc}
X & Y & Z & 0 & 0 & 0 & \cdots \\
0 & 0 & 0 & X & Y & Z & \cdots
\ea
\right]^\T.
\]
Thus $m \ge 4$. From the fact that $\G$ maps surjectively onto $\Coker(\f_{11}, \f_{12})$
we get the reverse inequality. Thus $m=4$, $\Coker(\psi_{31}) \isom 2\Om^1(1)$
and we obtain a resolution
\[
0 \lra \O(-3) \oplus \O(-2) \oplus 2\Om^1(1) \stackrel{\f}{\lra} \O(-1) \oplus 5\O \lra \G \lra 0,
\]
in which $\f_{13}=0$. Dually, we have the resolution
\[
0 \lra 5\O(-2) \oplus \O(-1) \lra 2\Om^1 \oplus \O \oplus \O(1) \lra \F \lra 0.
\]
Combining with the standard resolution of $\Om^1$ yields the exact sequence
\[
0 \lra 2\O(-3) \oplus 5\O(-2) \oplus \O(-1) \stackrel{\f}{\lra} 6\O(-2) \oplus \O \oplus \O(1) \lra \F \lra 0.
\]
From the semi-stability of $\F$ we see that $\rank(\f_{12})=5$, cf. argument at 2.1.4 \cite{mult_five}.
Canceling $5\O(-2)$ we obtain the desired resolution of $\F$. The conditions imposed on $\f$
follow from the semi-stability of $\F$.

Conversely, we assume that $\F$ has a resolution as in the proposition and we must show
that there are no destabilising subsheaves.
From the snake lemma we get an extension
\[
0 \lra \J_Z(2) \lra \F \lra \C_x \lra 0,
\]
where $\J_Z \subset \O_C$ is the ideal sheaf of a zero-dimensional subscheme $Z$ of length $2$
inside a sextic curve $C$ and $\C_x$ is the structure sheaf of a point.
Let $\F' \subset \F$ be a subsheaf of multiplicity at most $5$,
let $\CC$ be its image in $\C_x$ and $\K = \F' \cap \J_Z(2)$.
With the notations of \ref{4.4} we have
\begin{align*}
\PP_{\F'}(t) & = \PP_{\K}(t) + \h^0(\CC) \\
& = \PP_{\A}(t) - \h^0(\A/\K) + \h^0(\CC) \\
& = (6-d)t + \frac{d^2 -7d+6}{2} - \h^0(\A/\K) + \h^0(\CC)
\end{align*}
for some integer $d$, $1 \le d \le 5$, hence
\[
\pp(\F') = \frac{1-d}{2} + \frac{\h^0(\CC)-\h^0(\A/\K)}{6-d} \le \frac{1-d}{2} + \frac{1}{6-d} < \pp(\F).
\]
We conclude that $\F$ is stable.
\end{proof}

\noi \\
Let $\W_5 = \Hom(2\O(-3) \oplus \O(-1), \O(-2) \oplus \O \oplus \O(1))$
and let $W_5 \subset \W_5$ be the set of morphisms $\f$ from proposition \ref{5.1}.
Let
\[
G_5 = (\Aut(2\O(-3) \oplus \O(-1)) \times \Aut(\O(-2) \oplus \O \oplus \O(1)))/\C^*
\]
be the natural group acting by conjugation on $\W_5$.
Let $X_5 \subset \M(6,2)$ be the set of stable-equivalence classes of sheaves of the form
$\Coker(\f)$, $\f \in W_5$.

\begin{prop}
\label{5.2}
There exists a geometric quotient of $W_5$ by $G_5$ and it is isomorphic to a proper open
subset inside a fibre bundle with fibre $\P^{24}$ and base $\P^2 \times Y$, where $Y$
is the Hilbert scheme of zero-dimensional subschemes of $\P^2$ of length $2$.
Moreover, $W_5/G_5$ is isomorphic to $X_5$.
\end{prop}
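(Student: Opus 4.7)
The strategy parallels the constructions in Propositions \ref{3.1} and \ref{4.3}, and splits into (a) an explicit construction of the geometric quotient as a projective bundle over $\P^2 \times Y$, and (b) identification, via Beilinson, of the canonical map $W_5/G_5 \to X_5$ as an isomorphism.

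For part (a), I would follow the template of 3.2.3 in \cite{mult_five}. Let $W_5' \subset \W_5$ be the locally closed subset cut out by the conditions of Proposition \ref{5.1} except injectivity. Modulo the natural action, $\f_{11}$ determines a point of $\Grass(2, V^*) \isom \P^2$, while the pair $(\f_{22}, \f_{32})$ determines a point of $Y$: since $\f_{22} \neq 0$ does not divide $\f_{32}$, the line $\{\f_{22}=0\}$ and the conic $\{\f_{32}=0\}$ meet in a zero-dimensional subscheme of length $2$. Introduce the $G_5$-invariant subset $\Sigma \subset W_5'$ given by
\[
\f_{21} = \f_{22} u + v_1 \f_{11}, \qquad \f_{31} = \f_{32} u + v_2 \f_{11},
\]
for some $u \in \Hom(2\O(-3), \O(-1))$, $v_1 \in \Hom(\O(-2), \O)$ and $v_2 \in \Hom(\O(-2), \O(1))$. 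A standard construction as at loc.cit.\ then produces a rank-$25$ vector bundle $F$ over $\P^2 \times Y$ such that $\P(F)$ is a geometric quotient of $W_5' \setminus \Sigma$ modulo $G_5$; since injectivity of $\f$ is an open condition, $W_5/G_5$ emerges as a proper open subset of $\P(F)$.

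For part (b), given $\F$ in $X_5$, the Beilinson monad for the dual sheaf $\G = \F^\D(1) \in \M(6,4)$, together with the Euler sequence reduction and the cancellation of $5\O(-2)$ forced by $\rank(\f_{12})=5$, produces resolution \ref{5.1} in a functorial manner, exactly as in the proof of Proposition \ref{5.1}. Applying the method of 3.1.6 in \cite{drezet-maican} then shows that the canonical bijection $W_5/G_5 \to X_5$ is an isomorphism.

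The main obstacle is part (a): verifying that after removing $\Sigma$ the $G_5$-action descends cleanly, and that the equivalences genuinely assemble into a rank-$25$ vector bundle $F$ over $\P^2 \times Y$ in the sense of locally trivial fibrations. The dimension count is a useful sanity check: $\dim(\P^2 \times Y) + 24 = 6 + 24 = 30 = \dim \M(6,2) - 7$, matching the claimed codimension-$7$ stratum. Once the bundle structure is established, the identification with $X_5$ via Beilinson is essentially formal.
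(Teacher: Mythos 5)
Your overall architecture matches the paper's: for the bundle construction the paper simply says that it is ``entirely analogous to the construction of the quotient at 3.2.3 \cite{mult_five}'', and your explicit description of $W_5'$, of the subset $\Sigma$, and of the rank-$25$ bundle over $\P^2 \times Y$ is consistent with that reference (note only that the orbit of the pair $(\f_{22},\f_{32})$ records a line together with a length-$2$ subscheme of it, which identifies with a point of $Y$ because every length-$2$ subscheme of $\P^2$ spans a unique line). So part (a), which you single out as the main obstacle, is not where the paper puts its effort.

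The genuine gap is in part (b), which you dismiss as ``essentially formal'': in the paper this is where almost all of the work lies. To apply the method of 3.1.6 \cite{drezet-maican} one must extract resolution \ref{5.1} \emph{naturally} from the Beilinson data of $\G = \F^\D(1)$, and the paper does this not by rerunning the monad argument of \ref{5.1} but via the tableau (2.2.3), which yields the four-term sequence $0 \to \O(-4) \to \Coker(\f_4) \to \G \to \O_Z \to 0$ with $\O_Z$ of length $2$, together with $\Coker(\f_3) \isom 2\Om^1(1)$. To splice the resulting resolution of $\Coker(\f_5)$ with that of $\O_Z$ by the horseshoe lemma one must lift the surjection $\O(-1) \to \O_Z$ to $\G$, i.e.\ one needs the claim that $\G(1)$ has a global section mapping to a section of $\O_Z$ that generates it as an $\O_{\P^2}$-module. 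This is genuinely nontrivial: the count $\h^0(\G(1)) = 10 > 9 = \h^0(\Coker(\f_5)(1))$ only produces a section with nonzero image in $\O_Z$, and ruling out the possibility that this image generates a proper subsheaf $\C_{z_1} \subset \O_Z$ requires a separate contradiction argument (a horseshoe-lemma computation showing that the preimage $\F'$ of $\C_{z_1}$ would have to satisfy both $\h^0(\F') = 4$ and $\h^0(\F') \ge 5$). Only after this does one obtain a resolution of $\G$ with a cancellable $\O(-4)$ and hence, locally, a natural family of morphisms in $W_5$. Your proposal does not identify this step, and the assertion that the proof of \ref{5.1} already produces the resolution ``in a functorial manner'' is precisely the point that has to be proved.
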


\begin{proof}
The construction of $W_5/G_5$ is entirely analogous to the construction of the quotient at
3.2.3 \cite{mult_five}.

Let $\F$ give a point in $X_5$ and let $\G = \F^\D(1)$.
The Beilinson tableau (2.2.3) \cite{drezet-maican} for $\G$ takes the form
\[
\xymatrix
{
4\O(-2) \ar[r]^-{\f_1} & 4\O(-1) \ar[r]^-{\f_2} & \O \\
2\O(-2) \ar[r]^-{\f_3} & 6\O(-1) \ar[r]^-{\f_4} & 5\O
}.
\]
As at 6.5 \cite{mult_six_one}, we have $\Ker(\f_1) \isom \O(-4)$ and $\Ker(\f_2)/\Im(\f_1) \isom \O_Z$
for a scheme $Z \subset \P^2$ of dimension zero and length $2$.
The exact sequence (2.2.5) \cite{drezet-maican} takes the form
\[
0 \lra \O(-4) \stackrel{\f_5}{\lra} \Coker(\f_4) \lra \G \lra \O_Z \lra 0.
\]
As at 3.2.5 \cite{mult_five}, we have $\Coker(\f_3) \isom 2\Om^1(1)$.
This, together with (2.2.4) \cite{drezet-maican}, gives the resolution
\[
0 \lra \O(-4) \oplus 2\Om^1(1) \lra 5\O \lra \Coker(\f_5) \lra 0.
\]
We claim that $\G(1)$ has a global section which maps to a global section of $\O_Z$
that generates this sheaf as an $\O_{\P^2}$-module.
To show this we argue as at \ref{4.3}.
By 2.1.3 \cite{drezet-maican}, the group $\H^1(\G(1))$ vanishes, hence we have $\h^0(\G(1))=10$.
Since $\h^0(\Coker(\f_5)(1))=9$, we see that $\G(1)$ has a global section mapping to a non-zero
section $s$ of $\O_Z$.
We have an exact sequence
\[
0 \lra \C_{z_1} \lra \O_Z \lra \C_{z_2} \lra 0,
\]
where $z_1, z_2$ are not necessarily distinct points in $\P^2$.
If $s$ maps to zero in $\C_{z_2}$, then $s$ generates $\C_{z_1}$.
Let $\F'$ be the preimage of $\C_{z_1}$ in $\F$.
We can apply the horseshoe lemma to the extension
\[
0 \lra \Coker(\f_5) \lra \F' \lra \C_{z_1} \lra 0,
\]
to the above resolution of $\Coker(\f_5)$ and to the standard resolution of $\C_{z_1}$
tensored with $\O(-1)$. We obtain the exact sequence
\[
0 \lra \O(-3) \stackrel{\psi}{\lra} \O(-4) \oplus 2\O(-2) \oplus 2\Om^1(1) \lra \O(-1) \oplus 5\O \lra \F' \lra 0.
\]
We have $\h^1(\F')=\h^2(\Coker(\psi))=3$, hence $\h^0(\F')=4$.
On the other hand, $\H^0(\Coker(\psi))$ vanishes, hence $\h^0(\F') \ge 5$.
This is absurd, so an exact sequence as above cannot exist.
Thus the image of $s$ in $\C_{z_2}$ is non-zero and the claim follows as at \ref{4.3}.
We can now combine the resolutions of $\O_Z$ and of $\Coker(\f_5)$ from above
to get the exact sequence
\[
0 \lra \O(-4) \lra \O(-4) \oplus \O(-3) \oplus \O(-2) \oplus 2\Om^1(1) \lra \O(-1) \oplus 5\O \lra \G \lra 0.
\]
The map $\O(-4) \to \O(-4)$ is non-zero because $\h^1(\G)=1$.
We may cancel $\O(-4)$ to get the resolution
\[
0 \lra \O(-3) \oplus \O(-2) \oplus 2\Om^1(1) \lra \O(-1) \oplus 5\O \lra \G \lra 0.
\]
We saw at \ref{5.1} how this leads to a morphism $\f \in W_5$ such that $\F \isom \Coker(\f)$.
We conclude, as at 3.1.6 \cite{drezet-maican}, that the canonical bijective map
$W_5/G_5 \to X_5$ is an isomorphism.
\end{proof}

\begin{prop}
\label{5.3}
The generic sheaves from $X_5$ are precisely the non-split extension sheaves
\[
0 \lra \J_Z(2) \lra \F \lra \C_x \lra 0,
\]
where $\J_Z \subset \O_C$ is the ideal sheaf of a zero-dimensional scheme $Z$
of length $2$ inside a sextic curve $C \subset \P^2$ and $\C_x$ is the structure sheaf
of a point $x \in \P^2$ that is not in the support of $Z$.

There is a dense open subset of $X_5$ consisting of the isomorphism classes
of all sheaves of the form $\O_C(2)(P_1 - P_2 - P_3)$, where $C \subset \P^2$
is a smooth sextic curve and $P_1, P_2, P_3$ are distinct points on $C$.
In particular, $X_5$ lies in the closure of $X_3$ and also in the closure of $X_4$.
\end{prop}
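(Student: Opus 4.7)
The plan is to exploit the extension $0 \lra \J_Z(2) \lra \F \lra \C_x \lra 0$ obtained via the snake lemma in the proof of Proposition \ref{5.1}. Since every $\F$ in $X_5$ is pure of dimension $1$, no skyscraper embeds into it, so the extension is automatically non-split; the additional condition $x \notin \operatorname{supp}(Z)$ cuts out a dense open subset $U \subset X_5$. For the converse direction, given a non-split extension of $\C_x$ by $\J_Z(2)$ with $x \notin Z \subset C$ (a length-$2$ subscheme of a sextic $C$), I would verify the cohomological conditions of Proposition \ref{5.1} via the long exact sequence of the extension: using the values of $\h^0$ and $\h^1$ for $\J_Z(2)$ computed from $C$ by Riemann--Roch and Serre duality, combined with the fact that non-splitness forces the connecting map $\H^0(\C_x) \lra \H^1(\J_Z(2))$ to be non-zero, one obtains $\h^1(\F)=2$; an analogous argument for $\F(-1)$ yields $\h^0(\F(-1))=1$. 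Proposition \ref{5.1} then places $\F$ in $X_5$.

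For the second assertion I restrict to the open subset of $U$ on which $C$ is smooth and $Z = \{P_2, P_3\}$ consists of two distinct points; writing $P_1 := x$, the ideal $\J_Z$ on the smooth curve equals $\O_C(-P_2-P_3)$, so $\J_Z(2) = \O_C(2)(-P_2-P_3)$, and the non-split extension of $\C_{P_1}$ by $\O_C(2)(-P_2-P_3)$ on $\P^2$ is, locally at $P_1$, the unique non-split extension of $\C_{P_1}$ by the discrete valuation ring $\O_{C,P_1}$, namely $\O_{C,P_1}(P_1)$; globally this forces $\F \isom \O_C(2)(P_1-P_2-P_3)$. Conversely, every such line bundle on a smooth sextic is stable on $\P^2$ (a proper subsheaf is of the form $\O_C(D-E)$ with $E$ effective, hence has strictly smaller slope), has Hilbert polynomial $6m+2$, and fits into the non-split extension $0 \lra \O_C(2)(-P_2-P_3) \lra \F \lra \C_{P_1} \lra 0$ with $P_1 \notin \{P_2, P_3\}$, so the first paragraph places it in $X_5$.

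For the closure statements I would use explicit one-parameter families. For $X_5 \subset \overline{X}_3$, take $\F_t := \O_C(2)(P_1 + R(t) - P_2 - P_3 - R'(t))$ with $R(t), R'(t) \in C$ distinct for $t \neq 0$ and converging to a common generic point of $C$ as $t \to 0$; for generic small $t$ the five involved points are distinct and $\{P_2, P_3, R'(t)\}$ are non-collinear, so by Proposition \ref{4.2} $\F_t$ lies in the dense open subset of $X_3$, while the class $[R(t)-R'(t)] \in \operatorname{Pic}(C)$ tends to zero, giving $\F_t \to \F_0 \in X_5$. For $X_5 \subset \overline{X}_4$, let $L$ be the line through $P_2, P_3$ meeting $C$ at $P_2, P_3, S_1, S_2, S_3, S_4$; the linear equivalence $P_2+P_3+S_1+\cdots+S_4 \sim H$ on $C$ rewrites $\F_0 \isom \O_C(1)(P_1 + S_1 + \cdots + S_4)$, and deforming $S_i \mapsto S_i(t)$ in $C$ so that $\{P_1, S_1(t), \ldots, S_4(t)\}$ lie in general linear position for $t \neq 0$ produces by Proposition \ref{4.6} a family in $X_4$ converging to $\F_0$. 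The main obstacle is the cohomology computation in the first paragraph, which is what ties the extension data to the cohomological conditions of Proposition \ref{5.1} and justifies both converse directions.
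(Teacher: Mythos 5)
Your handling of the second and third assertions essentially reproduces the paper's argument: the identification of the generic sheaves with $\O_C(2)(P_1-P_2-P_3)$ via the unique non-split extension on a smooth sextic, the collision of a pair of points to land in $\overline{X}_3$, and the rewriting along the line $P_2P_3$ followed by Proposition \ref{4.6} to land in $\overline{X}_4$ are all as in the paper. The gap is in the converse direction of your first paragraph. There are two problems. First, Proposition \ref{5.1} characterises the sheaves with the stated resolution \emph{among sheaves giving points in} $\M(6,2)$, so verifying $\h^0(\F(-1))=1$ and $\h^1(\F)=2$ for the extension does not by itself allow you to invoke it: you must also prove that $\F$ is semi-stable, which you never address. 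Second, and more seriously, the step ``non-splitness forces the connecting map $\H^0(\C_x)\lra\H^1(\J_Z(2))$ to be non-zero'' is not a formal implication. The natural map $\Ext^1(\C_x,\J_Z(2))\lra\Hom(\H^0(\C_x),\H^1(\J_Z(2)))$ may a priori have a kernel, and a non-zero class in that kernel would produce a non-split $\F$ with $\h^1(\F)=3$; ruling this out is precisely the content that needs proving. When $C$ is smooth at $x$ one can argue that $\Ext^1(\C_x,\J_Z(2))\isom\H^0({\mathcal Ext}^1(\C_x,\J_Z(2)))$ is one-dimensional and check that $x$ is not a base point of $|\O_C(1)(Z)|$, but the statement also covers $C$ singular at $x$ and $Z$ non-reduced, where the local Ext group is larger and your Riemann--Roch computation ``on $C$'' does not directly apply.

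The paper avoids both issues simultaneously: it applies the horseshoe lemma to the resolutions $0\lra\O(-4)\lra 2\O(-3)\lra\O(-2)\lra\C_x\lra 0$ and $0\lra\O(-4)\lra\I_Z(2)\lra\J_Z(2)\lra 0$, uses the vanishing of $\Ext^1(\C_x,\I_Z(2))$ (valid exactly because $x$ is not in $Z$) to conclude that the induced map $\O(-4)\to\O(-4)$ is non-zero, and cancels to produce the resolution of Proposition \ref{5.1}; semi-stability and the cohomological conditions then come for free from that proposition. You should either replace your cohomological argument by this construction, or supplement it with a direct stability check (the slope estimates in the second half of the proof of \ref{5.1} use only the extension structure, so they could be quoted) together with a genuine proof that the connecting homomorphism is non-zero in all the configurations allowed by the statement.
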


\begin{proof}
Let $\F= \Coker(\f)$ give a point in $X_5$, where $\f$ is a morphism as at \ref{5.1}.
Let $x \in \P^2$ be the point given by the ideal generated by the entries of $\f_{11}$,
let $Z \subset \P^2$ be the subscheme given by the equations
$\f_{22}=0$, $\f_{32}=0$ and let $C \subset \P^2$ be the curve given by the equation
$\det(\f)=0$. We saw at \ref{5.1} that $\F$ is a non-split extension of $\C_x$ by $\J_Z(2)$.
Let $X_{50} \subset X_5$ be the open subset given by the condition that $x$ be not a
subscheme of $Z$. To show that every extension as in the proposition gives a point in $X_{50}$
we combine the resolutions
\[
0 \lra \O(-4) \lra 2\O(-3) \lra \O(-2) \lra \C_x \lra 0
\]
and
\[
0 \lra \O(-4) \lra \I_Z(2) \lra \J_Z(2) \lra 0.
\]
Here $\I_Z \subset \O_{\P^2}$ is the ideal sheaf of $Z$.
We obtain the resolution
\[
0 \lra \O(-4) \lra \O(-4) \oplus 2\O(-3) \lra \O(-2) \oplus \I_Z(2) \lra \F \lra 0.
\]
The group $\Ext^1(\C_x, \I_Z(2))$ vanishes because $x$ is not in $Z$, so we can apply the argument at
2.3.2 \cite{mult_five} to deduce that the morphism $\O(-4) \to \O(-4)$ in the above complex is non-zero.
Canceling $\O(-4)$ we obtain the resolution
\[
0 \lra 2\O(-3) \lra \O(-2) \oplus \I_Z(2) \lra \F \lra 0,
\]
which shows that $\F$ gives a point in $X_{50}$.

Clearly every sheaf of the form $\O_C(2)(P_1 - P_2 - P_3)$ is the limit of a sequence of sheaves
of the form $\O_C(2)(- Q_1 - Q_2 - Q_3 + Q_4 + Q_5)$ as at \ref{4.2} (make $Q_1$ converge to $Q_4$).
Thus $X_5 \subset \overline{X}_3$. To prove that $X_5 \subset \overline{X}_4$ fix a sheaf
$\F = \O_C(2)(Q_1 - P_2 - P_3)$ in $X_5$.
Choosing $\F$ general enough, we may assume that $Q_1, P_2, P_3$ are non-colinear
and that the line $P_2P_3$ meets $C$ at six distinct points $P_2, P_3, Q_2, Q_3, Q_4, Q_5$.
Then
\[
\O_C(2)(Q_1- P_2 - P_3) \isom \O_C(1)(Q_1 + \cdots + Q_5).
\]
Clearly, we can choose five distinct points $R_i$ on $C$ converging to $Q_i$, $1 \le i \le 5$,
such that no three among them are colinear.
According to proposition \ref{4.6}, $\O_C(1)(R_1 + \cdots + R_5)$ gives a point in $X_4$.
Thus $\O_C(2)(Q_1 -P_2 -P_3)$ lies in the closure of $X_4$.
\end{proof}


\section{The codimension $9$ stratum}

\begin{prop}
\label{6.1}
The sheaves $\F$ in $\M(6,2)$ satisfying the condition $\h^1(\F(1))>0$ are precisely
the sheaves with resolution of the form
\[
0 \lra \O(-4) \oplus \O \stackrel{\f}{\lra} 2\O(1) \lra \F \lra 0,
\]
\[
\f= \left[
\ba{cc}
f_1 & \ell_1 \\
f_2 & \ell_2
\ea
\right],
\]
where $\ell_1, \ell_2$ are linearly independent one-forms.
These sheaves are precisely the sheaves $\J_x(2)$, where $\J_x \subset \O_C$
is the ideal sheaf of a point $x$ on a sextic curve $C \subset \P^2$.
\end{prop}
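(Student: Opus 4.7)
The plan splits the proposition into an easy direction (which simultaneously identifies $\F$ with $\J_x(2)$ and verifies $\h^1(\F(1))>0$) and a hard direction (which derives the resolution from the cohomological condition).

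For the easy direction, assume $\F=\Coker(\f)$ with $\f$ as stated and $\ell_1,\ell_2$ linearly independent. These one-forms cut out a point $x\in\P^2$ and fit into the twisted Koszul sequence
\[
0 \lra \O \xrightarrow{(-\ell_2,\ell_1)} 2\O(1) \xrightarrow{(\ell_1,\ell_2)} \I_x(2) \lra 0,
\]
where $\I_x\subset\O_{\P^2}$ is the ideal sheaf of $x$. Setting $f=f_1\ell_2-f_2\ell_1\in\SS^6 V^*$, the determinant of $\f$ is $f$, so the support of $\F$ is the sextic $C=\{f=0\}$, which contains $x$. Since $\J_x=\I_x/f\O\subset\O_C$, we get the exact sequence
\[
0 \lra \O(-4) \xrightarrow{\cdot f} \I_x(2) \lra \J_x(2) \lra 0.
\]
Pulling back $\O(-4)\subset\I_x(2)$ along the Koszul surjection $2\O(1)\to\I_x(2)$ gives an extension of $\O(-4)$ by $\O$, which splits because $\Ext^1(\O(-4),\O)=\H^1(\O(4))=0$. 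The resulting resolution of $\J_x(2)$ has exactly the form stated in the proposition (after the obvious row operations in $\Aut(2\O(1))$), proving $\F\isom\J_x(2)$. Twisting the proposition's resolution by $\O(1)$ and taking cohomology yields $\h^1(\F(1))=\h^2(\O(-3))=1>0$. Semistability of $\J_x(2)$ follows from a standard slope estimate comparing with the semistable ambient sheaf $\O_C(2)$, handled along the lines of the arguments in \ref{4.4} and \ref{5.1}.

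For the hard direction, I would follow the Beilinson-tableau strategy used in Propositions \ref{4.1}, \ref{4.4} and \ref{5.1}, applied to the dual sheaf $\G=\F^\D(1)\in\M(6,4)$. Serre duality on $\P^2$ converts the hypothesis $\h^1(\F(1))>0$ into $\h^0(\G(-2))>0$; equivalently, there is a nonzero morphism $\O(2)\to\G$. This extra positivity in low twists forces the Beilinson tableau (2.2.3) of \cite{drezet-maican} for $\G$ to exhibit an additional $\O(-4)$ summand in the relevant kernel. Tracking the differentials, invoking the Euler sequence for $\Om^1(1)$ where needed, and cancelling redundant direct summands exactly as in the proofs cited above yields the dual resolution of $\G$; dualizing produces the resolution of $\F$ in the proposition. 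The linear independence of $\ell_1,\ell_2$ is then an immediate consequence of the semistability of $\F$ (otherwise they would have a common factor, producing a destabilizing $1$-dimensional quotient).

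The main obstacle is the hard direction, specifically confirming that the single hypothesis $\h^1(\F(1))>0$ determines all the ranks in the Beilinson tableau of $\G$. Concretely, one must verify that the image of any nonzero morphism $\O(2)\to\G$ is a subsheaf of multiplicity exactly $6$ with zero-dimensional cokernel (ruling out images of smaller multiplicity using the semistability of $\G$), so that the remaining invariants $\h^0(\F(-1))$, $\h^1(\F)$, $\h^0(\F\tensor\Om^1(1))$ are forced to match the $X_6$ row of the introductory table and the Beilinson reduction terminates in the two-term resolution claimed.
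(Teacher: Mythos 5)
Your proposal is correct in substance but takes a genuinely different route from the paper: the paper's entire proof of this proposition is the single sentence that it follows by duality from Proposition 6.1 of \cite{mult_six_one}, so everything you supply -- the Koszul/pullback construction identifying $\Coker(\f)$ with $\J_x(2)$, the computation $\h^1(\F(1))=\h^2(\O(-3))=1$, and the analysis of $\G=\F^\D(1)$ -- is outsourced there rather than reproved. Your self-contained argument buys independence from the companion paper at the cost of redoing its work. Two remarks on the sketch. First, the ``main obstacle'' you flag in the hard direction closes more cheaply than via the full Beilinson tableau: $\h^1(\F(1))>0$ gives $\h^0(\G(-2))>0$, hence an injective morphism $\O_{C'}(2)\to\G$ with $C'$ a curve of degree $d'$; since $\pp(\O_{C'}(2))=(7-d')/2$ and $\pp(\G)=2/3$, semi-stability forces $d'=6$, so the quotient has length $\chi(\G)-\chi(\O_C(2))=1$ and $\G$ is a non-split (by purity) extension of $\C_x$ by $\O_C(2)$; dualizing this extension gives $\F\isom\J_x(2)$ outright, and your easy direction then produces the resolution with no rank bookkeeping. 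Second, the semi-stability of $\J_x(2)$, which you defer to ``a standard slope estimate,'' is a genuine step -- one bounds a multiplicity-$d$ subsheaf by $\O_{C''}(2)(-C''')$ of slope $(d-5)/2\le 0<1/3$ using 3.1.2 of \cite{mult_five} -- and it is part of what the paper's citation is silently carrying; your deferral is acceptable but should be made explicit if the argument is to stand alone.
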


\begin{proof}
This statement follows by duality from \cite{mult_six_one}, proposition 6.1.
\end{proof}

\noi
Let $\W_6 = \Hom(\O(-4)\oplus \O, 2\O(1))$ and let $W_6 \subset \W_6$ be the set of morphisms
$\f$ from \ref{6.1}. Let
\[
G_6 = (\Aut(\O(-4) \oplus \O) \times \Aut(2\O(1)))/\C^*
\]
be the natural group acting by conjugation on $\W_6$.
Let $X_6 \subset \M(6,2)$ denote the set of stable-equivalence classes of sheaves of the form
$\Coker(\f)$, $\f \in W_6$.

\begin{prop}
\label{6.2}
There exists a geometric quotient $W_6/G_6$, which is isomorphic to the universal sextic
$\Sigma \subset \P^2 \times \P(\SS^6 V^*)$. Moreover, $W_6/G_6$ is isomorphic to $X_6$,
so this is a smooth closed subvariety of $\M(6,2)$ of codimension $9$.
\end{prop}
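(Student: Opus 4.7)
My plan is to construct a $G_6$-invariant morphism $\pi \colon W_6 \to \Sigma$, show it realizes $\Sigma$ as the geometric quotient $W_6/G_6$, and then identify this quotient with $X_6$. Define $\pi(\f) = (V(\ell_1, \ell_2),\, [\det \f])$. Since $\ell_1, \ell_2$ are linearly independent one-forms, $V(\ell_1, \ell_2)$ is a single point $x \in \P^2$, and $\det \f = f_1 \ell_2 - f_2 \ell_1$ vanishes at $x$, so $\pi$ factors through $\Sigma$. The $G_6$-action only rescales $\det \f$ and transforms $(\ell_1, \ell_2)$ within its own two-dimensional linear span, so $\pi$ is $G_6$-invariant. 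Surjectivity of $\pi$ onto $\Sigma$ is clear: given $(x, [f]) \in \Sigma$, pick any basis $\ell_1, \ell_2$ of the linear forms in $V^*$ vanishing at $x$; since $f \in (\ell_1, \ell_2) \cdot \SS^5 V^*$, we can write $f = f_1 \ell_2 - f_2 \ell_1$ and assemble the desired $\f \in W_6$.

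Next we verify that each fiber of $\pi$ is a single $G_6$-orbit by matching ambiguities to the factors of $G_6$: a change of basis of $V_x^* \subset V^*$ corresponds to a row operation in $\Aut(2\O(1))$, the residual freedom $(f_1, f_2) \mapsto (f_1 + g\ell_1, f_2 + g\ell_2)$ with $g \in \SS^4 V^*$ is exactly the column operation provided by $\Hom(\O(-4), \O) \subset \Aut(\O(-4) \oplus \O)$, and an overall scaling of $\det \f$ is absorbed by the diagonal scalars. The dimensions match: $\dim W_6 - \dim G_6 = 48 - 20 = 28 = \dim \Sigma$. Hence $\pi$ is a categorical quotient, and because $\Sigma$ is smooth --- it is a $\P^{26}$-bundle over $\P^2$ via the first projection --- hence normal, theorem 4.2 of \cite{popov-vinberg} (combined with remark (2), p.~5 of \cite{mumford}) upgrades this to a geometric quotient, giving $W_6 / G_6 \isom \Sigma$.

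For the identification $W_6/G_6 \isom X_6$, the cokernel map $\f \mapsto [\Coker(\f)]$ descends via \ref{6.1} to a bijection $\Sigma \to X_6$ sending $(x, C)$ to the class of $\J_x(2) \subset \O_C(2)$. To upgrade this bijection to an isomorphism of varieties we follow the method of 3.1.6 \cite{drezet-maican} used in the proofs of \ref{4.5} and \ref{5.2}: for any $\F$ in $X_6$, the Beilinson monad of the dual sheaf $\F^\D(1) \in \M(6,4)$ yields resolution \ref{6.1} naturally in families. Once this is in place, $X_6$ has codimension $\dim \M(6,2) - \dim \Sigma = 37 - 28 = 9$, smoothness is inherited from $\Sigma$, and closedness of $X_6$ in $\M(6,2)$ is immediate from semi-continuity applied to the defining condition $\h^1(\F(1)) > 0$. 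The main obstacle we anticipate is precisely this last Beilinson step, since the orbit analysis and the geometric-quotient claim reduce to direct matrix computations together with standard GIT facts.
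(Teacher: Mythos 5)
Your first half — the construction of the quotient map $W_6 \to \Sigma$, the orbit analysis (change of basis of the span of $\ell_1,\ell_2$, the ambiguity $(f_1,f_2)\mapsto(f_1+g\ell_1,f_2+g\ell_2)$ absorbed by $\Hom(\O(-4),\O)$, the dimension count $48-20=28$) — is correct and is exactly what the paper invokes by citation to 3.2 of \cite{drezet-maican} and 3.2.5 of \cite{mult_five}. The gap is in the second half, and you have in effect flagged it yourself: you assert that ``the Beilinson monad of the dual sheaf $\F^\D(1)$ yields resolution \ref{6.1} naturally in families'' and then name this as ``the main obstacle we anticipate.'' That step is not a routine repetition of the earlier propositions; it is where essentially the entire content of the paper's proof lies, and without it you only have a bijective morphism $\upsilon\colon \Sigma \to X_6$. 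Note that you cannot shortcut this with Zariski's main theorem: that would require knowing $X_6$ is normal, which is not available until after the identification with $\Sigma$ is established. So the inverse must actually be exhibited as a morphism.

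For comparison, here is what the paper does for that step, and it differs in detail from what you sketch. Rather than running Beilinson on $\F^\D(1)\in\M(6,4)$, it shows directly that the pair $(x,C)$ can be recovered algebraically from the sheaf, working ``for technical reasons'' with $\G=\J_x^\D$, which gives a point in $\M(6,10)$ and sits in an extension $0\to\O_C(3)\to\G\to\C_x\to 0$. From the Beilinson tableau for $\G$ one gets $\Coker(\f_1)\isom\C_y$ with $\Ker(\f_1)\isom\O(-3)$ (using semi-stability of $\G$), and the sequence (2.2.5) produces $\G'=\Coker(\f_5)$ with a resolution $0\to 6\O(-2)\to\O(-4)\oplus 15\O(-1)\to 10\O\to\G'\to 0$; a section count gives $\H^0(\G')=\H^0(\G)$, whence $\G'\isom\O_C(3)$, so $C$ is the Fitting support of $\G'$ (generated by the maximal minors of the presenting matrix) and $y=x$. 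This is the concrete recipe that makes $\upsilon^{-1}$ algebraic. If you prefer your route via the monad of $\F^\D(1)\in\M(6,4)$ and the method of 3.1.6 \cite{drezet-maican}, that is plausible in principle, but you would still have to carry out the analogous spectral-sequence analysis and verify that resolution \ref{6.1} is obtained canonically; as written, your proof asserts the conclusion of the hard step rather than proving it.
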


\begin{proof}
For the first part of the proposition we notice, as at 3.2 \cite{drezet-maican} or at 3.2.5 \cite{mult_five},
that the map $W_6 \to \Sigma$ defined by
\[
\left[
\ba{cc}
f_1 & \ell_1 \\
f_2 & \ell_2
\ea
\right] \lra (x, \langle f_1 \ell_2 - f_2 \ell_1 \rangle),
\]
$x$ being given by the equations $\ell_1 = 0$, $\ell_2 = 0$, is a geometric quotient map.
The canonical morphism $\rho \colon W_6 \to X_6$, $\rho(\f) = [ \Coker(\f) ]$, determines a bijective
morphism
\[
\upsilon \colon \Sigma \lra X_5, \qquad \upsilon(x, \langle f \rangle) = [\J_x(2)],
\]
where $\J_x \subset \O_C$ is the ideal sheaf of $x$ on the curve $C$ given by the equation
$f=0$.
As at 6.5 \cite{mult_six_one}, in order to show that $\upsilon^{-1}$ is a morphism,
we need to construct the pair $(x,C)$ starting from $\EE^1(\J_x(2))$.
For technical reasons we will work, instead, with $\EE^1(\J_x^\D)$.
Denote $\G=\J_x^\D$ and notice that $\G$ gives a point in $\M(6,10)$ and is an extension of the form
\[
0 \lra \O_C(3) \lra \G \lra \C_x \lra 0.
\]
The Beilinson tableau (2.2.3) \cite{drezet-maican} for $\G$ has the form
\[
\xymatrix
{
2\O(-2) \ar[r]^-{\f_1} & \O(-1) & 0 \\
6\O(-2) \ar[r]^-{\f_3} & 15\O(-1) \ar[r]^-{\f_4} & 10 \O
}.
\]
Since $\G$ is semi-stable and maps surjectively onto $\Coker(\f_1)$ we see that
$\Coker(\f_1) \isom \C_y$ for a point $y \in \P^2$ and $\Ker(\f_1) \isom \O(-3)$.
The exact sequence (2.2.5) \cite{drezet-maican} reads
\[
0 \lra \O(-4) \stackrel{\f_5} \lra \Coker(\f_4) \lra \G \lra \C_y \lra 0.
\]
Denote $\G'= \Coker(\f_5)$. The exact sequence (2.2.4) \cite{drezet-maican}
yields the resolution
\[
0 \lra 6\O(-2) \stackrel{\psi'}{\lra} \O(-4) \oplus 15\O(-1) \stackrel{\f'}{\lra} 10\O \lra \G' \lra 0.
\]
We have $\h^0(\G')=10$, hence $\H^0(\G')=\H^0(\G)$.
The global sections of $\G$ generate $\O_C(3)$ and $\G'$ is generated by its global sections.
Thus $\G'=\O_C(3)$.
The maximal minors of any matrix representing $\f'$ generate the ideal of $C$
because the Fitting support of $\G'$ is $C$.
It is clear that $x=y$.
In conclusion, we have obtained the pair $(x, C) \in \Sigma$ from $\EE^1(\G)$
by performing algebraic operations.
\end{proof}

\begin{prop}
\label{6.3}
$X_6$ lies in the closure of $X_5$.
\end{prop}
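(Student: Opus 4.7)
The plan is to realise each sheaf in $X_6$ as the limit of sheaves of the form $\O_C(2)(P_1 - P_2 - P_3)$, which by \ref{5.3} sweep out a dense open subset of $X_5$. Since $X_6$ is irreducible, isomorphic to the universal sextic $\Sigma$, and the locus of pairs $(x,C)$ with $C$ smooth and $x \in C$ is open and dense in $\Sigma$, it suffices to prove the inclusion for $\F = \J_x(2)$ with $C$ smooth; the general case then follows from the closedness of $\overline{X}_5$.

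Fix such $\F$, choose an auxiliary point $P_0 \in C \setminus \{x\}$ and pick a smooth pointed curve $(T,0)$ together with two sections $P_1, P_2 \colon T \to C$ such that $P_1(0) = P_2(0) = P_0$ and $P_1(t), P_2(t), x$ are pairwise distinct for every $t$ in a punctured neighbourhood of $0$. Letting $\Gamma_i \subset C \times T$ denote the graph of $P_i$, I would form the line bundle
\[
\mathcal{L} = \pi_C^* \O_C(2) \otimes \O_{C \times T}(\Gamma_1 - \Gamma_2 - \{x\} \times T)
\]
on $C \times T$ and push it forward along the closed immersion $C \times T \hookrightarrow \P^2 \times T$. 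Since $\mathcal{L}$ is a line bundle on $C \times T$, the pushforward is $T$-flat, so the resulting family induces a morphism $T \to \M(6,2)$.

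For $t \neq 0$ sufficiently close to $0$, the fibre $\mathcal{L}_t$ equals $\O_C(2)(P_1(t) - P_2(t) - x)$, a non-zero line bundle on the smooth integral curve $C$, hence simple and in particular stable as a coherent sheaf on $\P^2$; its degree is $12 + 1 - 1 - 1 = 11$, which gives the Hilbert polynomial $6m + 2$ via $\chi(\mathcal{L}_t(m)) = 11 + 6m + 1 - 10$, and by \ref{5.3} the class $[\mathcal{L}_t]$ lies in $X_5$. At $t = 0$ the two graphs meet $C \times \{0\}$ in the common point $P_0$, so $\O_{C \times T}(\Gamma_1 - \Gamma_2)$ restricts to $\O_C$ on $C \times \{0\}$ and $\mathcal{L}_0 \isom \O_C(2)(-x) = \J_x(2) = \F$. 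Hence the morphism $T \to \M(6,2)$ sends a punctured neighbourhood of $0$ into $X_5$ and sends $0$ to $[\F]$, proving $[\F] \in \overline{X}_5$.

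The only step requiring any real verification is the triviality of $\O_{C \times T}(\Gamma_1 - \Gamma_2)|_{C \times \{0\}}$, which is immediate from $\Gamma_1|_{C \times \{0\}} = \Gamma_2|_{C \times \{0\}} = P_0$; the remaining checks (flatness, constancy of Hilbert polynomial, stability of each $\mathcal{L}_t$) are entirely routine, so the argument presents no substantive obstacle.
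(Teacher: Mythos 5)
Your argument is correct and is essentially the paper's own proof: the paper disposes of \ref{6.3} in one sentence by observing that a generic $\O_C(2)(-P)$ in $X_6$ is the limit of sheaves $\O_C(2)(P_1-P_2-P)$ from $X_5$ as $P_1$ converges to $P_2$, exactly the degeneration you carry out. Your version merely makes explicit the flat family over $T$ and the reduction to the generic (smooth $C$) case via irreducibility of $X_6$, both of which the paper leaves implicit.
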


\begin{proof}
Any generic sheaf $\O_C(2)(-P)$ in $X_6$, with $C \subset \P^2$ a smooth sextic curve
and $P \in C$, is the limit of a sequence of sheaves of the form $\O_C(2)(P_1-P_2 - P)$
as at \ref{5.3} (make $P_1$ converge to $P_2$).
\end{proof}


\section{The moduli space is the union of the strata}

\noi
In this final section we shall prove that $\M(6,2)$ is the union of the locally closed subsets
$X_1, \ldots, X_6$ we found above.

\begin{prop}
\label{7.1}
There are no sheaves $\F$ giving points in $\M(6,2)$ and satisfying the conditions
$\h^0(\F(-1))=0$, $\h^1(\F)=2$.
\end{prop}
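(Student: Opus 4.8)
The plan is to derive a contradiction by analyzing the Beilinson resolution forced by the cohomological conditions $\h^0(\F(-1))=0$ and $\h^1(\F)=2$, exactly as in the companion Propositions \ref{3.3}, \ref{4.1} and \ref{5.1}. The strategy throughout the paper has been: use the Beilinson free monad to produce a resolution of $\F$ (or of the dual $\G=\F^\D(1)$) whose shape is dictated by the cohomology dimensions, then read off the multiplicity and Euler characteristic of natural quotient or subsheaves and contradict semi-stability. I expect the same machinery to apply here, with the punchline being that the conditions $\h^1(\F)=2$ together with $\h^0(\F(-1))=0$ simply cannot be realized by a semi-stable sheaf of Hilbert polynomial $6t+2$.

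\textbf{First} I would compute the relevant Beilinson cohomology dimensions. Writing $m=\h^0(\F\tensor\Om^1(1))$, the dimensions $\h^i(\F(j))$ appearing in diagram (2.2.3) of \cite{drezet-maican} are determined by $\chi$, the multiplicity $6$, and the three invariants $\h^0(\F(-1))=0$, $\h^1(\F)=2$, and $m$. I would assemble the Beilinson monad and extract a resolution of the form
\[
0 \lra \O(-3) \oplus 2\O(-2) \oplus m\O(-1) \stackrel{\f}{\lra} (m-2)\O(-1) \oplus 3\O \lra \F \lra 0
\]
(or whatever the correct display turns out to be after accounting for $\h^1(\F)=2$ raising the number of $\O(-2)$ summands by one relative to the $\h^1=1$ case of \ref{3.3}), with $\f_{13}=0$ forced as in those earlier arguments. \textbf{Next} I would bound $m$ from both sides: the entries of the relevant block of $\f$ span $V^*$, giving a lower bound $m \ge 3$, while the surjection of $\F$ onto $\Coker(\f_{11},\f_{12})$ gives an upper bound. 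The point is that with $\h^1(\F)=2$ these two bounds become incompatible, or else they pin $m$ to a value for which the cokernel $\Coker(\f_{11},\f_{12})$ is a quotient of $\F$ of slope strictly below $\pp(\F)=1/3$, violating semi-stability.

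\textbf{The main obstacle} will be getting the bookkeeping in the Beilinson tableau exactly right: the jump from $\h^1(\F)=1$ to $\h^1(\F)=2$ changes the ranks of the free modules in the monad, and I must be careful that the cancellation of redundant $\O(-1)$ summands (as performed in \ref{3.3} and \ref{4.1}) is carried out correctly so that the slope computation of the destabilizing quotient is airtight. Concretely, I expect that whatever value of $m$ survives the inequalities forces a subsheaf or quotient sheaf whose normalized leading coefficient $\pp$ exceeds (for a subsheaf) or falls below (for a quotient) the value $1/3$, and that there is no room—unlike in the $\h^1=1$ case where $m=1$ or $2$ both survive—for any admissible configuration. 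If a direct resolution-based contradiction proves delicate, the fallback is to pass to the dual sheaf $\G=\F^\D(1)$ in $\M(6,4)$, where the conditions translate via Serre duality into $\h^0(\G(-2))=\h^1(\F)=2$ and $\h^2(\G(1))=\h^0(\F(-1))^\vee=0$, and to run the same monad argument on $\G$; the vanishing statements established in the proofs of \ref{4.7} and \ref{6.2} suggest this dual formulation may make the obstruction transparent.
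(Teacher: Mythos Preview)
Your fallback is actually the paper's main line: it passes immediately to the dual $\G=\F^\D(1)\in\M(6,4)$ and extracts from its Beilinson monad the resolution
\[
0 \lra 2\O(-2) \lra 4\O(-2) \oplus (m+2)\O(-1) \stackrel{\f}{\lra} m\O(-1) \oplus 4\O \lra \G \lra 0,
\]
after which the surjection $\G\twoheadrightarrow\Coker(\f_{11})$ yields $m\le 3$, and then each value of $m$ is ruled out by classifying the canonical forms of the map $\psi\colon 2\O(-2)\to(m+2)\O(-1)$ (the paper defers this to 3.1.3 of \cite{mult_five}). So the approaches converge once you take the dual route.

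There are two genuine gaps in your primary plan. First, your displayed resolution for $\F$ is not what the Beilinson monad produces: with $\h^1(\F)=2$ the monad has a nonzero rightmost term $2\O$, so you get a three-term complex (compare the monad written out in the proof of \ref{7.2} with $p=2$), and there is no clean cancellation down to your two-term form with $\O(-3)\oplus 2\O(-2)$ on the left. Second, and more importantly, the contradiction mechanism you anticipate does not materialise: the bounds on $m$ are \emph{not} incompatible (all of $m=0,1,2,3$ survive the rank-count inequalities), and one does not obtain a destabilising quotient merely by reading off $\pp(\Coker(\f_{11},\f_{12}))$. The real work is the case analysis on the $(m+2)\times 2$ matrix of linear forms representing $\psi$: for each canonical form one shows that either $\f$ fails to be injective, or the resulting cokernel acquires a forbidden sub- or quotient sheaf. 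This step is where the argument lives, and your outline does not anticipate it.
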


\begin{proof}
Assume that there is such a sheaf $\F$.
Put $m=\h^0(\F \tensor \Om^1(1))$.
The Beilinson monad for the dual sheaf $\G=\F^\D(1)$ gives the resolution
\[
0 \lra 2\O(-2) \lra 4\O(-2) \oplus (m+2)\O(-1) \stackrel{\f}{\lra} m\O(-1) \oplus 4\O \lra \G \lra 0.
\]
Since $\G$ maps surjectively onto $\Coker(\f_{11})$, we have $m \le 3$.
The rest of the proof is exactly as at 3.1.3 \cite{mult_five}.
Let $\psi \colon 2\O(-2) \to (m+2)\O(-1)$ denote the morphism occurring in the above complex.
In the case $m=3$, say, there are three possible canonical forms for $\psi$ given at loc.cit.,
each leading to a contradiction.
\end{proof}

\begin{prop}
\label{7.2}
There are no sheaves $\F$ giving points in $\M(6,2)$ and satisfying the cohomological conditions
\[
\h^0(\F(-1)) \le 1, \quad \qquad \h^1(\F) \ge 3, \quad \qquad \h^1(\F(1)) = 0.
\]
\end{prop}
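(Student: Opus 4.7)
The plan is to dualize and analyse the Beilinson monad of the dual sheaf, in close parallel with the proof of \ref{7.1}.

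Set $\G = \F^\D(1)$, a point of $\M(6,4)$. By Serre duality the three hypotheses translate to
\[
\h^0(\G(-1)) = h_1 \ge 3, \qquad \h^1(\G) = h_0 \le 1, \qquad \h^0(\G(-2)) = \h^1(\F(1)) = 0,
\]
while Riemann--Roch yields $\h^1(\G(-1)) = h_1+2$ and $\h^0(\G) = h_0 + 4$. Put $m = \h^0(\G \tensor \Om^1(1))$; the Euler sequence gives $\h^1(\G \tensor \Om^1(1)) = m-2$, so $m \ge 2$.

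Combining the Beilinson monad (2.2.1 \cite{drezet-maican}) with the Euler sequence, as in the proof of 2.1.4 \cite{mult_five}, produces a four-term complex
\[
h_1\O(-2) \stackrel{\psi}{\lra} (h_1+2)\O(-2) \oplus m\O(-1) \stackrel{\f}{\lra} (m-2)\O(-1) \oplus (h_0+4)\O \lra h_0\O
\]
whose only non-vanishing cohomology is $\G$, at the third position. When $h_0=0$ the rightmost term vanishes and the complex becomes a resolution of $\G$; when $h_0=1$ the last arrow is surjective and, after splitting off a trivial $\O$-summand, one gets a resolution of the same shape.

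The natural surjection of $\G$ onto the cokernel of the block $(h_1+2)\O(-2) \oplus m\O(-1) \to (m-2)\O(-1)$ of $\f$, combined with semi-stability of $\G$, forces $m$ into a small finite set of values. For each allowed $m$ I would examine the canonical forms of the linear component $\psi_2 \colon h_1\O(-2) \to m\O(-1)$ of $\psi$, a matrix of linear forms with at least three columns, exactly as at 3.1.3 \cite{mult_five}. Each possible normal form either provokes a destabilising subsheaf of $\G$ via the snake lemma (using the kind of slope estimate carried out in \ref{4.4} and \ref{5.1}), or produces a non-zero global section of $\G(-2)$, contradicting the third hypothesis. The main obstacle is the bookkeeping in this case analysis, but since $h_1 \ge 3$ falls within the range already treated in \cite{mult_five} the enumeration imports directly and rules out every possibility.
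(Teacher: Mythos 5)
Your setup is sound and parallels the paper's: the duality translation $\h^0(\G(-1))=\h^1(\F)$, $\h^1(\G)=\h^0(\F(-1))$, $\h^0(\G(-2))=\h^1(\F(1))$ is correct, and the Beilinson monad you write down has the right shape. The gap is in the endgame. You reduce the proposition to an enumeration of canonical forms of the linear block $\psi_2\colon h_1\O(-2)\to m\O(-1)$, claim that $m$ is confined to ``a small finite set,'' and assert that the enumeration ``imports directly'' from 3.1.3 of \cite{mult_five}. None of this is established. First, you never bound $h_1=\h^1(\F)$; the surjection onto $\Coker(\f_{11},\f_{12})$ only gives an inequality of the form $m-2\le h_1$ (in the paper's second case, $m-3\le p$), which bounds neither quantity absolutely, so the matrix whose normal forms you propose to list has unbounded size and the case analysis is not finite as described. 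Second, 3.1.3 of \cite{mult_five} treats a morphism $2\O(-2)\to(m+2)\O(-1)$ with $m\le 3$ --- a matrix of bounded, small size in a different numerical situation --- so nothing there ``imports directly'' to an $h_1\times m$ matrix with $h_1\ge 3$ arbitrary. The assertion that every normal form either destabilises $\G$ or produces a section of $\G(-2)$ is exactly the content that needs proof, and it is deferred rather than given.

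The paper avoids all of this with a two-line count of global sections, and you should too. From the resolution
$0\to 4\O(-2)\oplus m\O(-1)\to\Ker(\eta)\oplus(p+2)\O\to\F\to 0$ (case $\h^0(\F(-1))=0$, $p=\h^1(\F)$) one reads off
$\h^0(\F(1))=3(p+2)+\h^0(\Ker(\eta)(1))-m\ge 2(p+2)\ge 10$, using only $\rank\Ker(\eta)=m+2-p\le 4$; since $\chi(\F(1))=8$ this forces $\h^1(\F(1))\ge 2$. The case $\h^0(\F(-1))=1$ is handled the same way after dualising, giving $\h^0(\F(1))\ge 2p+3\ge 9$ and hence $\h^1(\F(1))\ge 1$. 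No normal forms, no destabilising subsheaves, and the bound is uniform in $p$. I recommend you replace your concluding paragraph with this Euler-characteristic estimate; as written, your proof does not close.
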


\begin{proof}
The argument is the same as at 7.2 \cite{mult_six_one} with notational differences only.
Assume that $\F$ gives a point in $\M(6,2)$ and satisfies the conditions
$\h^0(\F(-1))=0$, $\h^1(\F) \ge 3$. Write $p=\h^1(\F)$, $m=\h^0(\F \tensor \Om^1(1))$.
The Beilinson free monad for $\F$ reads
\[
0 \lra 4\O(-2) \oplus m\O(-1) \lra (m+2)\O(-1) \oplus (p+2)\O \stackrel{\psi}{\lra} p\O \lra 0,
\]
\[
\psi = \left[
\ba{cc}
\eta & 0
\ea
\right],
\]
and yields a resolution
\[
0 \lra 4\O(-2) \oplus m\O(-1) \stackrel{\f}{\lra} \Ker(\eta) \oplus (p+2)\O \lra \F \lra 0
\]
in which $\f_{12}=0$. We have $m+2-p = \rank(\Ker(\eta)) \le 4$ because $\f$ is injective.
Thus
\[
\h^0(\F(1)) = 3(p+2) + \h^0(\Ker(\eta)(1)) -m \ge 2(p+2) \ge 10
\]
forcing $\h^1(\F(1)) \ge 2$.
Assume, instead, that $\h^0(\F(-1))=1$.
The Beilinson monad for the dual sheaf $\G = \F^\D(1)$ reads
\[
0 \lra p\O(-2) \lra (p+2)\O(-2) \oplus (m+2)\O(-1) \lra m\O(-1) \oplus 5\O \lra \O \lra 0
\]
and leads to a resolution
\begin{multline*}
0 \lra p\O(-2) \lra \O(-3) \oplus (p-1)\O(-2) \oplus (m+2)\O(-1) \stackrel{\f}{\lra} \\
(m-3)\O(-1) \oplus 5\O \lra \G \lra 0
\end{multline*}
in which $\f_{13} =0$. Since $\G$ maps surjectively onto $\Coker(\f_{11},\f_{12})$, we have $m-3 \le p$.
Dualising the above resolution we get a monad for $\F$ of the form
\[
0 \lra 5\O(-2) \oplus (m-3)\O(-1) \lra (m+2)\O(-1) \oplus (p-1)\O \oplus \O(1) \stackrel{\psi}{\lra}
p\O \lra 0,
\]
\[
\psi = \left[
\ba{ccc}
\eta & 0 & 0
\ea
\right].
\]
The exact sequence
\[
0 \lra 5\O(-2) \oplus (m-3)\O(-1) \lra \Ker(\eta) \oplus (p-1)\O \oplus \O(1) \lra \F \lra 0
\]
gives the estimate
\[
\h^0(\F(1)) = 3(p-1) + 6 + \h^0(\Ker(\eta)(1)) - (m-3) \ge 3p+6-m \ge 2p+3 \ge 9.
\]
We deduce that $\h^1(\F(1)) \ge 1$.
\end{proof}

\begin{prop}
\label{7.3}
Let $\F$ be a sheaf giving a point in $\M(6,2)$ and satisfying the condition $\h^1(\F(1))=0$.
Then $\h^0(\F(-1))=0$ or $1$.
\end{prop}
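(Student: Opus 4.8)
The plan is to argue by contradiction: I assume $\F$ gives a point in $\M(6,2)$ with $\h^1(\F(1))=0$ but $\h^0(\F(-1)) \ge 2$, and derive a contradiction. The organising principle is Proposition \ref{6.1}. It identifies the sheaves in $\M(6,2)$ with $\h^1(\F(1))>0$ as exactly the sheaves $\J_x(2)$, and for such a sheaf a direct computation gives $\h^0(\F(-1)) = \h^0(\J_x(1)) = 2$, since the sections are the two linear forms through $x$ (note $\H^0(\O(1)) \to \H^0(\O_C(1))$ is an isomorphism because $C$ is a sextic). Thus it suffices to prove that the single condition $\h^0(\F(-1)) \ge 2$ already forces $\h^1(\F(1))>0$ (equivalently, by \ref{6.1}, forces $\F \isom \J_x(2)$); this contradicts $\h^1(\F(1))=0$ and proves the proposition.

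To get a handle on the invariants I would pass to the dual: with $\G = \F^\D(1)$ giving a point in $\M(6,4)$, Serre duality gives $\h^0(\F(-1)) = \h^1(\G)$ and $\h^1(\F(1)) = \h^0(\G(-2))$, so the statement is equivalent to the assertion that a sheaf $\G$ in $\M(6,4)$ with $\h^0(\G(-2))=0$ satisfies $\h^1(\G) \le 1$. Either on the side of $\F$ or of $\G$ I then run the Beilinson machinery as in \ref{7.1} and \ref{7.2}. For $\F$ the tableau now has a nonzero bottom-left corner $\h^0(\F(-1)) \tensor \O(-2)$, i.e. a term $a\O(-2)$ with $a = \h^0(\F(-1)) \ge 2$ genuinely present in the monad; the other entries are controlled by $p = \h^1(\F)$ and $m = \h^0(\F \tensor \Om^1(1))$, with $\h^1(\F \tensor \Om^1(1)) = m+2$ forced by the Euler sequence (since $\chi(\F \tensor \Om^1(1)) = 3\chi(\F) - \chi(\F(1)) = -2$). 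The point of the hypothesis $\h^1(\F(1))=0$ is to pin these numbers down: combined with the surjection of $\F$ onto the cokernel of the Kronecker block and with semi-stability (bounding, as in \ref{3.3} and \ref{4.4}, the slopes of the sub- and quotient sheaves read off from the resolution), it should force $(a,p,m)=(2,3,6)$ and, after cancelling the redundant summands, the resolution
\[
0 \lra \O(-4) \oplus \O \lra 2\O(1) \lra \F \lra 0
\]
of \ref{6.1}, which is the desired contradiction.

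The main obstacle is the intermediate case analysis: ruling out every configuration with $a \ge 2$ and $\h^1(\F(1))=0$ except the degenerate one reproducing \ref{6.1}. As in \ref{7.1}, this reduces to listing the canonical forms of the relevant morphism under the acting group and eliminating each by semi-stability; the extra summand $a\O(-2)$ makes the bookkeeping heavier than before, and one must be careful because sheaves of negative slope on a sextic can still carry sections (for instance $\O_C$), so naive slope estimates do not close the argument. Conceptually the statement is the gonality of a plane sextic: on a smooth sextic $C$ one has $\F \isom \O_C(D)$ with $\deg(D-H)=5$, and $\h^0(\F(-1)) \ge 2$ says that the degree-$5$ class $D-H$ moves in a pencil; since every $g^1_5$ on a smooth plane sextic is of the form $|H-Q|$ for a point $Q \in C$, this forces $2H - D = H - (D-H)$ to be effective, i.e. $\h^1(\F(1)) = \h^0(\O_C(2H-D)) > 0$. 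This smooth-locus picture is exactly what the homological argument has to reproduce for arbitrary, possibly singular or non-reduced, support.
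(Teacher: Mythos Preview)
Your proposal is a plan rather than a proof: the decisive step---showing that the Beilinson analysis under $a=\h^0(\F(-1))\ge 2$ either forces the resolution of \ref{6.1} or otherwise produces $\h^1(\F(1))>0$---is stated as an expectation (``it should force $(a,p,m)=(2,3,6)$'') and never carried out. You also do not say how the hypothesis $\h^1(\F(1))=0$ enters the machinery; the Beilinson tableau for $\F$ involves only the cohomology of $\F(-1)$, $\F\otimes\Om^1(1)$, and $\F$, so $\h^1(\F(1))$ is not an input one feeds in but an output one estimates from the resulting resolution (this is exactly how \ref{7.2} uses it). With $a\ge 2$ the free monad becomes a genuine four--term complex, and neither the bound $m\le\text{something}$ nor the reduction to a single canonical form is available without work you have not done. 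The gonality heuristic for smooth sextics is suggestive but, as you acknowledge, does not address singular or non-reduced support.

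The paper's proof takes a different route that bypasses Beilinson here. From $\h^0(\F(-1))\ge 2$ one extracts an injection $\O_C(1)\hookrightarrow\F$ with $\deg(C)\in\{5,6\}$ (smaller degree is excluded by semi-stability) and analyses the quotient $\F/\O_C(1)$ directly. For $\deg(C)=5$ the quotient has Hilbert polynomial $t+2$; its zero-dimensional torsion has length $0$ or $1$, and the two subcases either give, via the horseshoe lemma, the resolution $0\to\O(-4)\oplus\O\to 2\O(1)\to\F\to 0$ (whence $\h^1(\F(1))=1$) or contradict $\h^0(\F(-1))\ge 2$. For $\deg(C)=6$ the quotient is zero-dimensional of length $5$; passing to a length-$4$ subsheaf produces an intermediate $\F'$, shown to lie in $\M(6,1)$, and the known resolutions for such $\F'$ with $\h^0(\F'(-1))\ge 1$ (from \cite{mult_six_one}) again force $\h^1(\F(1))=1$ or a contradiction. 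The missing idea in your proposal is precisely this: use the section of $\F(-1)$ geometrically rather than numerically, and lean on the already-established classifications in $\M(5,1)$, $\M(4,1)$, and $\M(6,1)$ to close each case.
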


\begin{proof}
Let $\F$ give a point in $\M(6,2)$ and satisfy the condition $\h^0(\F(-1))\ge 2$.
As at 2.1.3 \cite{drezet-maican}, there is an injective morphism $\O_C \to \F(-1)$
for a curve $C \subset \P^2$. This curve has degree $5$ or $6$, otherwise $\O_C$
would destabilise $\F(-1)$.
Assume that $\deg(C)=5$. The quotient sheaf $\CC = \F/\O_C(1)$ has Hilbert polynomial
$\PP(t)=t+2$ and zero-dimensional torsion $\TT$ of length at most $1$
(the pull-back in $\F$ of $\TT$ would be a destabilising subsheaf if its length were at least $2$).
If $\TT=0$, then $\CC \isom \O_L(1)$ for a line $L \subset \P^2$.
We get that $\h^0(\F(-1))=2$ and that the morphism $\O(1) \to \O_L(1)$ lifts to a morphism
$\O(1) \to \F$. The horseshoe lemma leads to the resolution
\[
0 \lra \O(-4) \oplus \O \lra 2\O(1) \lra \F \lra 0.
\]
Thus $\h^1(\F(1))=1$. Assume now that $\TT$ has length $1$.
Let $\F' \subset \F$ be the pull-back of $\TT$. According to 3.1.5 \cite{mult_five},
we have $\h^0(\F'(-1)) =1$. Since $\F/\F' \isom \CC/\TT \isom \O_L$
for a line $L \subset \P^2$, we get $\h^0(\F(-1))=1$, contradicting our choice of $\F$.

Assume now that $C$ is a sextic curve.
The quotient sheaf $\TT = \F/\O_C(1)$ is zero-dimensional of length $5$.
Let $\TT' \subset \TT$ be a subsheaf of length $4$ and let $\F'$ be its preimage in $\F$.
We claim that $\F'$ gives a point in $\M(6,1)$. If this were not the case, then $\F'$ would have
a destabilising subsheaf $\F''$, which may be assumed to be semi-stable.
By proposition \ref{2.3}, $\F$ is stable, so we have the inequalities $1/6 < \pp(\F'') < 1/3$.
This leaves only two possibilities: that $\F''$ give a point in $\M(5,1)$ or in $\M(4,1)$.
In the first case $\F/\F''$ is isomorphic to the structure sheaf of a line,
hence $\h^0(\F(-1)) = \h^0(\F''(-1)) = 0$ or $1$, cf. \cite{mult_five}.
This contradicts our choice of $\F$.
In the second case $\F/\F''$ is easily seen to be semi-stable,
hence it is isomorphic to the structure sheaf of a conic curve.
We get $\h^0(\F(-1)) = \h^0(\F''(-1))=0$, cf. \cite{drezet-maican},
contradicting our choice of $\F$. This proves the claim, i.e. that $\F'$ is semi-stable.
We have $\h^0(\F'(-1)) \ge 1$ so, according to \cite{mult_six_one},
there are two possible resolutions for $\F'$:
\[
0 \lra 2\O(-3) \oplus \O(-2) \lra \O(-2) \oplus \O(-1) \oplus \O(1) \lra \F' \lra 0
\]
or
\[
0 \lra \O(-4) \oplus \O(-1) \lra \O \oplus \O(1) \lra \F' \lra 0.
\]
Assume that $\F'$ has the first resolution. We apply the horseshoe lemma to the extension
\[
0 \lra \F' \lra \F \lra \C_x \lra 0,
\]
to the given resolution of $\F'$ and to the resolution
\[
0 \lra \O(-1) \lra 2\O \lra \O(1) \lra \C_x \lra 0.
\]
As $\h^0(\F'(-1)) =1$ and $\h^0(\F(-1)) \ge 2$, the morphism $\O(1) \to \C_x$ lifts to a morphism
$\O(1) \to \F$. We obtain the resolution
\[
0 \lra \O(-1) \lra 2\O(-3) \oplus \O(-2) \oplus 2\O \lra \O(-2) \oplus \O(-1) \oplus 2\O(1) \lra \F \lra 0.
\]
The morphism $\O(-1) \to 2\O(-3) \oplus \O(-2)$ occurring above is zero and
$\Ext^1(\C_x, \O(-2) \oplus \O(-1) \oplus \O(1))=0$, so we can argue as at 2.3.2 \cite{mult_five}
to conclude that $\F$ is a trivial extension of $\C_x$ by $\F'$.
This contradicts the semi-stability of $\F$.
Assume, finally, that $\F'$ has the second resolution.
We can apply the horseshoe lemma as above, leading to the resolution
\[
0 \lra \O(-1) \lra \O(-4) \oplus \O(-1) \oplus 2\O \lra \O \oplus 2\O(1) \lra \F \lra 0.
\]
We see from this that $\h^1(\F(1))=1$.
\end{proof}


\begin{thebibliography}{99}

\bibitem{modules-alternatives} J.-M.~Dr\'ezet, {\em Vari\'et\'es de modules alternatives}.
Ann. Inst. Fourier {\bf 49} (1999), 57-139.

\bibitem{drezet-maican} J.-M.~Dr\'ezet and M.~Maican, \emph{On the geometry of the
moduli spaces of semi-stable sheaves supported on plane quartics}.
Geom. Dedicata {\bf 152} (2011), 17--49.

\bibitem{drezet-trautmann} J.-M.~Dr\'ezet and G.~Trautmann,
\emph{Moduli spaces of decomposable morphisms of sheaves and
quotients by non-reductive groups}. Ann. Inst. Fourier
{\bf 53} (2003), 107-192.

\bibitem{huybrechts} D.~Huybrechts and M.~Lehn, \emph{The geometry
of moduli spaces of sheaves}. Aspects of Mathematics E31,
Vieweg, Braunschweig, 1997.

\bibitem{king} A.~King, {\em Moduli of representations of finite
dimensional algebras}. Q. J. Math. Oxf. II Ser. {\bf 45} (1994), 515-530.

\bibitem{lepotier} J.~Le~Potier, \emph{Faisceaux semi-stables de
dimension $1$ sur le plan projectif}. Rev. Roumaine Math. Pures
Appl. {\bf 38} (1993), 635-678.

\bibitem{maican} M.~Maican, \emph{On two notions of semistability}. Pacific J. Math.
{\bf 234} (2008), 69-135.

\bibitem{maican-duality} \bysame, \emph{A duality result for moduli spaces
of semistable sheaves supported on projective curves}. Rend. Sem. Mat. Univ. Padova
{\bf 123} (2010), 55-68.

\bibitem{mult_five} \bysame, \emph{On the moduli spaces of semi-stable plane sheaves
of dimension one and multiplicity five}. arXiv:1007.1815

\bibitem{mult_six_one} \bysame, \emph{On the moduli space of semi-stable plane sheaves
with Euler characteristic one and supported on sextic curves}. arXiv:1105.0112

\bibitem{mumford} D.~Mumford, J.~Fogarty and F.~Kirwan. \emph{Geometric invariant
theory}. 3rd enl. ed. Ergebnisse der Mathematik und ihrer Grenzgebiete, 3 Folge 34.
Springer Verlag, Berlin, 1993.

\bibitem{popov-vinberg} V.~Popov and E.~Vinberg. \emph{Invariant theory} in
\emph{Algebraic geometry IV}, A.~Parshin and I.~Shafarevich (eds.), G.~Kandall (transl.)
Encyclopaedia of mathematical sciences v. 55. Springer Verlag, Berlin, 1994. 

\end{thebibliography}
\end{document}